\algnewcommand{\Initialize}[1]{
  \State \textbf{Initialize:}
  \Statex \hspace*{\algorithmicindent}\parbox[t]{.8\linewidth}{\raggedright #1}
}
\algnewcommand{\Indent}[2]{
  \State {#1}
  \vspace{-2mm}
  \Statex \hspace*{\algorithmicindent}\parbox[t]{.9\linewidth}{\raggedright #2}
}
\newcommand{\weak}{\rightharpoonup}
\newcommand{\weakstar}{\stackrel{\ast}{\rightharpoonup}}
\newcommand{\vect}[1]{\boldsymbol{#1}}
\newcommand{\vectt}[1]{\boldsymbol{\mathbf{#1}}}
\newcommand{\dx}{\mathrm{d}x}
\newcommand{\divv}[1]{\mathrm{div}(#1)}
\newcommand{\tens}[1]{\pmb{\mathsf{#1}}}
\newcommand{\Hdiv}{\vect{H}(\mathrm{div}; \Omega)}
\newcommand{\Hgdiv}{\vect{H}_{\vect{g},\mathrm{div}}(\mathrm{div}; \Omega)}
\newcommand{\Hzdiv}{\vect{H}_{0}(\mathrm{div}; \Omega)}
\newcommand{\Hgnodiv}{\vect{H}_{\vect{g}}(\mathrm{div}; \Omega)}
\newcommand{\Hu}{H^1_{\vect{g},\mathrm{div}}(\Omega)^d}
\newcommand{\DG}{\mathrm{DG}}
\newcommand{\lsb}{[\![}
\newcommand{\rsb}{]\!]}
\newcommand{\lcb}{\{\!\!\{}
\newcommand{\rcb}{\}\!\!\}}
\definecolor{deepgreen}{rgb}{0.1, 0.55, 0.1}
\crefname{hypothesis}{Hypothesis}{Hypotheses}
\title{Numerical analysis of a discontinuous Galerkin method for the Borrvall--Petersson topology optimization problem  \thanks{Submitted DATE.
\funding{The author is supported by the EPSRC Centre for Doctoral Training in Partial Differential Equations: Analysis and Applications [grant number  EP/L015811/1] and The MathWorks, Inc.}}}
\author{Ioannis P.~A.~Papadopoulos\thanks{Mathematical Institute, University of Oxford, Oxford, UK (\email{ioannis.papadopoulos@maths.ox.ac.uk)}.}}
\begin{document}

\maketitle

% REQUIRED
\begin{abstract}
Divergence-free discontinuous Galerkin (DG) finite element methods offer a suitable discretization for the pointwise divergence-free numerical solution of Borrvall and Petersson's model for the topology optimization of fluids in Stokes flow [Topology optimization of fluids in Stokes flow, International Journal for Numerical Methods in Fluids 41 (1) (2003) 77--107]. The convergence results currently found in literature only consider $H^1$-conforming discretizations for the velocity.  In this work, we extend the numerical analysis of Papadopoulos and S\"uli to divergence-free DG methods with an interior penalty [I.~P.~A.~Papadopoulos and E.~S\"uli, Numerical  analysis  of  a  topology optimization problem for Stokes flow, arXiv preprint arXiv:2102.10408, (2021)]. We show that, given an isolated minimizer of the  infinite-dimensional problem,  there exists a sequence of DG finite element solutions, satisfying necessary first-order optimality conditions, that strongly converges to the minimizer.
\end{abstract}

% REQUIRED
\begin{keywords}
topology optimization, nonconvex variational problem, multiple solutions, finite element method, discontinuous Galerkin method
\end{keywords}

% REQUIRED
\begin{AMS}
35Q35, 65K10, 65N30, 90C26
\end{AMS}

\section{Introduction}

The Borrvall--Petersson problem is the first model in literature for the topology optimization of fluid flow \cite{Borrvall2003}. Given a restriction on the proportion of a design domain that a Stokes fluid can occupy, the problem attempts to find the optimal channels from the inlets to the outlets that minimize the power dissipation of the flow. No prior assumptions are required for the shape or topology of the solution resulting in a flexible optimization process \cite{Allaire2012, Bendsoe2004}. This flexibility comes at the cost of a nonconvex optimization problem with PDE, box, and inequality constraints that often supports multiple minimizers. 

The topology of the solution is encoded in the material distribution, $\rho : \Omega \to [0,1]$; a function that maps from the coordinates of the domain to the unit interval. Regions where $\{\rho = 1\} \subset \Omega$ are physically interpreted as the channels in which the fluid flows through. Whereas the regions $\{\rho = 0\} \subset \Omega$ are highly impermeable and thus there is negligible fluid flow in these areas. Intermediate values $\{0 < \rho < 1\} \subset \Omega$ are difficult to interpret. However, these regions are penalized in the Borrvall--Petersson model via an inverse permeability function $\alpha$. 

Since Borrvall and Petersson's seminal work, there have been numerous extensions. Evgrafov \cite{Evgrafov2006}, Olesen et al.~\cite{Olesen2005},  and Gersborg-Hansen et al.~\cite{Gersborg2005} extended the model to fluids satisfying the steady-state Navier--Stokes flow. Kreissl et al.~\cite{Kreissl2011} and Deng et al.~\cite{Deng2011} were the first to consider unsteady Navier--Stokes flow and Deng et al.~\cite{Deng2013} later included body forces.  Alonso et al.~extended the model to rotating bodies in cylindrical coordinates \cite{Alonso2018, Alonso2020}. For a detailed review on the literature of the topology optimization of fluids,  we refer to the work of Alexandersen and Andreasen \cite{Alexandersen2020} .

Due to the nonlinear nature of the problem,  solutions of the infinite-dimensional problem  are often difficult to find. Hence, the problem is typically discretized with the finite element method and the minimizers are computed numerically.  The first result concerning finite element convergence can be found in the original paper by Borrvall and Petersson \cite{Borrvall2003}.  They showed that a conforming and inf-sup stable finite element discretization for the velocity-pressure pair together with a piecewise constant discretization for the material distribution weakly(-*) converges to an unspecified minimizer of the problem.  Thore \cite{Thore2021} proved a similar result for a low-order finite element approximation for the velocity-pressure pair that utilizes a penalty on the jumps of the pressure to overcome the violation of the inf-sup stability.  Recently, the  original Borrvall--Petersson result  was improved and extended by Papadopoulos and S\"uli \cite{Papadopoulos2021b} for conforming and inf-sup stable finite element discretizations for the velocity-pressure pair and conforming discretizations for the material distribution. They showed that for \emph{every} isolated minimizer of the problem, there exists a sequence of finite element solutions, to the first-order optimality conditions, that \emph{strongly} converges to the minimizer. Their analysis resolved a number of outstanding issues; namely the lack of checkerboarding in material distribution approximation as the mesh size tends to zero and whether every minimizer of the problem could be well approximated by a finite element method. The difficulty in the analysis is primarily due to the nonconvexity of problem. In order to account for the possibility of multiple minimizers, the authors fix a  minimizer of the infinite-dimensional problem  and construct a modified optimization problem with the fixed  minimizer  as its unique minimizer. Then, strong convergence of the finite element minimizers to the modified optimization problem is proven. The modified optimization problem is then related back to the original optimization problem by showing that a subsequence of the finite element minimizers also satisfy the first-order optimality conditions of the original optimization problem.

The analysis by Borrvall and Petersson \cite{Borrvall2003},  Thore \cite{Thore2021},  and Papadopoulos and S\"uli \cite{Papadopoulos2021b} heavily relied on the use of an $H^1$-conforming finite element discretization for the velocity. In the past couple of decades, discontinuous Galerkin (DG) methods for fluid flow have become increasingly popular \cite{Cockburn2002, Cockburn2007, Gauger2019, Konno2011, Konno2012}. This is in part due to the existence of divergence-free DG finite element methods. Some stable finite element methods for fluid flow, such as the Taylor--Hood finite element pair,  do not satisfy the incompressibility constraint, $\divv{\vect{u}} = 0$, pointwise.  This manifests as a dependence of the error in the velocity on the best approximation error in the pressure. In some problems,  pointwise violation of the incompressibility constraint  has been observed to support instabilities that result in nonphysical solutions \cite{John2017, Linke2019}. In divergence-free finite element methods, the incompressibility constraint is satisfied pointwise which is useful for ensuring pressure robustness \cite{Scott1985} and deriving error bounds on the velocity that are independent of the error of the pressure. 

In Borrvall--Petersson topology optimization problems, a natural mesh refinement to obtain sharper solutions is in regions where $0 < \rho < 1$ a.e. It can be empirically checked that mesh refinement in these regions does little to improve the error in the pressure. If the convergence for the velocity and material distribution rely heavily on the convergence of the pressure, then only doing mesh refinement in those areas caps the improvement in the errors for the velocity and material distribution. This motivates the need for discretizations that decouple the dependence of errors of the velocity and material distribution with the approximation error of the pressure. Divergence-free finite element discretizations also allow for an easier characterization of the kernel of the discretized grad-div term. This characterization has applications in preconditioners for systems arising in incompressible fluid flow \cite{Farrell2021, Farrell2019a, Hong2016, Papadopoulos2021c, Schoberl1999}. 

$H^1$-conforming divergence-free finite element methods exist, for example the \\Scott--Vogelius finite element \cite{Scott1985}. To ensure inf-sup stability for a general mesh in a $k$-th order Scott--Vogelius finite element method, the polynomial order for the velocity space must be $k \geq 2d$ where $d \in \{2,3\}$ is the dimension of the problem \cite{Scott1985,Zhang2011}. The expense of the high order method is normally justified by the accompanying high convergence rate. However, the material distribution $\rho$ is often discretized with piecewise constant or continuous piecewise linear finite elements due to the box constraints on the material distribution. The box constraints not only cause algorithmic restrictions but also reduce the regularity of $\rho$. The relatively low order approximation of the material distribution then caps the order of convergence of the velocity and pressure \cite[Sec.~5.1]{Papadopoulos2021b} which negates the advantage of the high order method. Inf-sup stability can be achieved for $k\geq d$ if the mesh is barycentrically refined \cite{Qin1994}. This was successfully implemented for the double-pipe problem \cite[Sec.~4.1]{Papadopoulos2021a},  an example of the Borrvall--Petersson problem with a local and global minimizer. However, barycentrically refined meshes can be difficult to align with jumps in the  material distribution that solves the infinite-dimensional problem,  which can lead to poorly resolved solutions. Moreover, barycentrically refined meshes complicate the generation of a mesh hierarchy for robust multigrid cycles \cite{Farrell2021}. In contrast, there exist low-order divergence-free DG finite element methods that are inf-sup stable on general meshes. 

In this paper, we extend the results of Papadopoulos and S\"uli \cite{Papadopoulos2021b} to divergence-free DG finite element methods. A standard interior penalty method is used to control the jumps across the facets. Our main result is to show that for every isolated minimizer of the Borrvall--Petersson optimization problem, there exists a sequence of DG finite element solutions to the discretized first-order optimality conditions that strongly converges in the appropriate norms to the minimizer. In particular, if $(\vect{u}, \rho)$ is an isolated velocity-material distribution minimizer to the Borrvall--Petersson problem, and $p$ is the associated pressure, then there exists a sequence of strongly converging finite element solutions $(\vect{u}_h, p_h, \rho_h)$ such that $\| \vect{u} - \vect{u}_h \|_{H^1_{\vect{g}}(\mathcal{T}_h)} \to 0$, $\| p - p_h \|_{L^2(\Omega)} \to 0$, and $\| \rho - \rho_h \|_{L^s(\Omega)} \to 0$, $s \in [1, \infty)$, where $\|\cdot\|_{H^1_{\vect{g}}(\mathcal{T}_h)}$ is the broken $H^1_{\vect{g}}$-norm as defined in \cref{def:H1gbrokennorm}. This analysis ensures that \emph{every} isolated minimizer is well approximated by the divergence-free DG finite element method as the mesh size tends to zero. %Our results may have implications for a number of finite volume schemes implemented in the topology optimization literature \cite{Dilgen2018,Dilgen2018b,Gersborg2006,Koch2017,Marck2013, Stuck2013,Vadakkepatt2018}.

\section{Topology optimization of Stokes flow}
\label{sec:BP}
Given a volume constraint on a fluid in a fixed  bounded  Lipschitz domain $\Omega \subset \mathbb{R}^d$, $d \in \{2, 3\}$, the Borrvall--Petersson model attempts to minimize the energy lost by the flow due to viscous dissipation, whilst maximizing the flow velocities at the applied body force. More precisely, the objective is to find $(\vect{u},\rho) \in \Hu \times C_\gamma$ that minimizes
\begin{align}
J(\vect{u},\rho) \coloneqq\frac{1}{2} \int_\Omega \left(\alpha(\rho) |\vect{u}|^2 + \nu |\nabla \vect{u}|^2 - 2\vect{f} \cdot \vect{u}\right)  \text{d}x,\label{borrvallmin} \tag{BP}
\end{align}
where $\vect{u}: \Omega \to \mathbb{R}^d$ denotes the velocity of the fluid, $\rho: \Omega \to [0,1]$ is the material distribution of the fluid,  $| \cdot |$ denotes the Euclidean norm for $\vect{u}$ and the Frobenius norm for $\nabla \vect{u}$,  and
\begin{align*}
H^1_{\vect{g}}(\Omega)^d &\coloneqq \{\vect{v} \in H^1(\Omega)^d: \vect{v} = \vect{g} \; \text{on} \; \partial \Omega \},\\
\Hu &\coloneqq \{\vect{v} \in H^1_{\vect{g}}(\Omega)^d : \divv{\vect{v}} = 0 \; \text{a.e.\ in} \; \Omega \},\\
C_{\gamma} &\coloneqq \left\{ \eta \in L^\infty(\Omega) : 0 \leq \eta \leq 1 \; \text{a.e.}, \;\; \int_\Omega \eta \; \text{d}x \leq \gamma |\Omega| \right \}.
\end{align*}
Here,  $H^s(\Omega)$, $0 < s < \infty$, and $L^q(\Omega)$, $0 < q \leq \infty$, denote the standard Sobolev ($W^{s,2}(\Omega)$) and Lebesgue spaces, respectively \cite{Adams2003}.  Furthermore, $\vect{f} \in L^2(\Omega)^d$ is a body force, $\nu > 0$ is the (constant) viscosity, and $\gamma \in (0,1)$ is the volume fraction. The equality $\vect{u} = \vect{g}$ on $\partial \Omega$ is to be understood in the boundary trace sense \cite[Sec.~5.5]{Evans2010}. Moreover, the boundary data $\vect{g} \in H^{1/2}(\partial \Omega)^d$ and $\vect{g} = \vect{0} $ on $\Gamma \subset \partial \Omega$, with $\mathcal{H}^{d-1}(\Gamma)>0$, i.e.\ $\Gamma$ has nonzero Hausdorff measure on the boundary. Borrvall and Petersson introduced the inverse permeability term, $\alpha$, which models the influence of the material distribution on the flow. For values of $\rho$ close to one, $\alpha(\rho)$ is small, permitting fluid flow; for values of $\rho$ close to zero, $\alpha(\rho)$ is very large, restricting fluid flow. The function $\alpha$ satisfies the following properties:
\begin{enumerate}[label=({A}\arabic*)]
\item $\alpha: [0,1] \to [\underline{\alpha}, \overline{\alpha}]$ with $0 \leq \underline{\alpha} < \overline{\alpha} < \infty$;
\label{alpha1}
\item $\alpha$ is strongly convex and monotonically decreasing; \label{alpha2}
\item $\alpha(0) = \overline{\alpha}$ and $\alpha(1) = \underline{\alpha}$;
\label{alpha3}
\item $\alpha$ is twice continuously differentiable, \label{alpha4}
\end{enumerate}
generating an operator also denoted $\alpha: C_\gamma \to L^\infty(\Omega; [\underline{\alpha},\overline{\alpha}])$. Typically, in the literature $\alpha$ takes the form \cite{Borrvall2003, Evgrafov2014}
\begin{align}
\alpha(\rho) = \bar{\alpha}\left( 1 - \frac{\rho(q+1)}{\rho+q}\right), \label{eq:alphachoice}
\end{align}
where $q>0$ is a penalty parameter, so that $\lim_{q \to \infty} \alpha(\rho) = \bar{\alpha}(1-\rho)$. Borrvall and Petersson \cite[Sec.~3.2]{Borrvall2003} remark that as $q \to \infty$ the material distribution tends to a 0-1 solution.

We define the following spaces that will be used throughout this work:
\begin{align}
L^2_0(\Omega) &\coloneqq \left\{ q \in L^2(\Omega) : \int_\Omega q \; \dx  = 0 \right \},\\
H^1_0(\Omega)^d &\coloneqq \{\vect{v} \in H^1(\Omega)^d: \vect{v} = \vect{0} \; \text{on} \; \partial \Omega \},\\
\Hdiv &\coloneqq \left\{ \vect{v} \in L^2(\Omega)^d : \divv{\vect{v}} \in L^2(\Omega) \right\},\\
\Hzdiv &\coloneqq \left\{ \vect{v} \in \Hdiv : \vect{v}\cdot \vect{n} = 0 \; \text{on} \; \partial \Omega  \right\},\\
\Hgnodiv &\coloneqq \left\{ \vect{v} \in \Hdiv : (\vect{v} - \vect{g})\cdot \vect{n} = 0 \; \text{on} \; \partial \Omega \right\},\\
\Hgdiv &\coloneqq \left\{ \vect{v} \in \Hgnodiv : \divv{\vect{v}} =0 \; \text{a.e.~in} \; \Omega  \right\}.
\end{align}
We note that $\vect{v}\cdot \vect{n}$ on $\partial \Omega$ is well-defined for all $\vect{v} \in \Hdiv$ \cite[Th.~3.12]{Arnold2018}. Moreover, $\Hdiv$ is a Hilbert space when equipped with the inner product
\begin{align}
(\vect{u}, \vect{v})_{\Hdiv} \coloneqq \int_\Omega \vect{u} \cdot \vect{v} + \divv{\vect{u}}\, \divv{\vect{v}}\, \dx.
\end{align}
We extend the definition of $J$ in \cref{borrvallmin} to functions $\vect{v} \not \in H^1_{\vect{g}}(\Omega)^d$ by
\begin{align}
J(\vect{v}, \eta) = + \infty \;\; \text{for all} \;\; \vect{v} \not\in H^1_{\vect{g}}(\Omega)^d, \, \eta \in C_\gamma.
\label{Hdiv-J}
\end{align}

The following existence theorem is due to Borrvall and Petersson \cite[Th.~3.1]{Borrvall2003}.
\begin{theorem}
\label{th:BPexistence}
Suppose that $\Omega \subset \mathbb{R}^d$ is a Lipschitz domain, with $d\in \{2,3\}$ and $\alpha$ satisfies properties \labelcref{alpha1}--\labelcref{alpha3}.  Then, $J(\vect{v}, \eta)$ is weak$\times$weak-* lower semicontinuous on $H^1(\Omega)^d \times L^\infty(\Omega)$.  Moreover, there exists a pair $(\vect{u}, \rho) \in \Hu \times C_\gamma$ that minimizes $J$ (as defined in \cref{borrvallmin}). 
\end{theorem}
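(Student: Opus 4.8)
The plan is to proceed by the direct method of the calculus of variations, treating the two assertions in turn: first the weak$\times$weak-* lower semicontinuity of $J$, and then the existence of a minimizer, which follows once lower semicontinuity and compactness of minimizing sequences are in hand. For the lower semicontinuity, suppose $\vect{u}_n \weak \vect{u}$ in $H^1(\Omega)^d$ and $\rho_n \weakstar \rho$ in $L^\infty(\Omega)$, and treat the three contributions to $J$ separately. The viscous term $\tfrac{\nu}{2}\int_\Omega |\nabla \vect{u}|^2 \, \dx$ is (a multiple of) the squared $L^2$-norm of the gradient, which is convex and strongly continuous, hence weakly lower semicontinuous; since $\nabla \vect{u}_n \weak \nabla \vect{u}$ in $L^2(\Omega)^{d \times d}$ it passes correctly to the $\liminf$. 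The body-force term $-\int_\Omega \vect{f}\cdot\vect{u}\,\dx$ is linear in $\vect{u}$ and, because $\Omega$ is a bounded Lipschitz domain, the embedding $H^1(\Omega)^d \hookrightarrow L^2(\Omega)^d$ is compact (Rellich--Kondrachov), so $\vect{u}_n \to \vect{u}$ strongly in $L^2$ and this term is in fact continuous.

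The coupled term $\tfrac{1}{2}\int_\Omega \alpha(\rho_n)|\vect{u}_n|^2\,\dx$ is the crux. First I would split
\[
\int_\Omega \alpha(\rho_n)|\vect{u}_n|^2\,\dx = \int_\Omega \alpha(\rho_n)|\vect{u}|^2\,\dx + \int_\Omega \alpha(\rho_n)\bigl(|\vect{u}_n|^2 - |\vect{u}|^2\bigr)\,\dx .
\]
The second integral vanishes in the limit, since $\|\alpha(\rho_n)\|_{L^\infty} \leq \overline{\alpha}$ by \labelcref{alpha1} while $|\vect{u}_n|^2 \to |\vect{u}|^2$ strongly in $L^1(\Omega)$ (a consequence of the strong $L^2$-convergence above). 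For the first integral I would exploit the convexity of $\alpha$ from \labelcref{alpha2}: the subgradient inequality $\alpha(\rho_n) \geq \alpha(\rho) + \alpha'(\rho)(\rho_n - \rho)$, multiplied by the nonnegative weight $|\vect{u}|^2$ and integrated, gives $\int_\Omega \alpha(\rho_n)|\vect{u}|^2\,\dx \geq \int_\Omega \alpha(\rho)|\vect{u}|^2\,\dx + \int_\Omega \alpha'(\rho)|\vect{u}|^2 (\rho_n - \rho)\,\dx$; since $\alpha'(\rho)|\vect{u}|^2 \in L^1(\Omega)$ (using the boundedness of $\alpha'$ afforded by \labelcref{alpha4}) and $\rho_n - \rho \weakstar 0$ in $L^\infty(\Omega)$, the last term tends to zero. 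Combining the three estimates yields $J(\vect{u},\rho) \leq \liminf_{n} J(\vect{u}_n,\rho_n)$.

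For existence I would verify coercivity and extract a convergent minimizing sequence. Since $\alpha(\rho)|\vect{u}|^2 \geq 0$ and the body-force term may be absorbed using Young's and Poincar\'e's inequalities (the latter valid because functions in $H^1_{\vect{g}}(\Omega)^d$ differ from a fixed extension of $\vect{g}$ by a function of vanishing trace), one obtains $J(\vect{u},\rho) \geq \tfrac{\nu}{4}\|\nabla \vect{u}\|_{L^2(\Omega)}^2 - C$, so $J$ is bounded below. For a minimizing sequence $(\vect{u}_n, \rho_n) \in \Hu \times C_\gamma$, coercivity bounds $\{\vect{u}_n\}$ in $H^1(\Omega)^d$ and the constraint $0 \leq \rho_n \leq 1$ bounds $\{\rho_n\}$ in $L^\infty(\Omega)$; reflexivity of $H^1$ together with the Banach--Alaoglu theorem then furnish a subsequence with $\vect{u}_n \weak \vect{u}$ and $\rho_n \weakstar \rho$. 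The limit is admissible: $\divv{\vect{u}} = 0$ and the trace condition $\vect{u} = \vect{g}$ persist because $\Hu$ is a closed affine subspace, hence weakly closed, while $\rho \in C_\gamma$ because the pointwise bounds and the volume constraint $\int_\Omega \rho\,\dx \leq \gamma|\Omega|$ define a convex, weak-* closed set. The lower semicontinuity established above then gives $J(\vect{u},\rho) \leq \liminf_n J(\vect{u}_n,\rho_n) = \inf J$, so $(\vect{u},\rho)$ minimizes $J$.

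The main obstacle is precisely the coupled term, where a nonlinear function of the merely weak-*-convergent $\rho_n$ multiplies a quadratic in the weakly convergent $\vect{u}_n$. Its treatment rests on two ingredients acting in tandem: the compact Sobolev embedding, which upgrades the velocity convergence to strong $L^2$ and thereby decouples the two sequences, and the convexity of $\alpha$, which controls the weak-* limit of $\alpha(\rho_n)$ tested against the fixed nonnegative weight $|\vect{u}|^2$---crucially without requiring $\alpha(\rho_n) \weakstar \alpha(\rho)$, which generally fails for a nonlinear $\alpha$.
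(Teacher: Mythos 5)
The paper does not prove this theorem itself --- it is quoted from Borrvall and Petersson \cite[Th.~3.1]{Borrvall2003} --- but your proposal is the standard direct-method argument used there, and it is essentially correct: the decoupling of $\int_\Omega \alpha(\rho_n)|\vect{u}_n|^2\,\dx$ via the compact embedding $H^1(\Omega)^d \hookrightarrow L^2(\Omega)^d$ followed by an appeal to the convexity of $\alpha$ is exactly the crux of the original proof, and the coercivity and weak(-*) closedness steps are handled correctly. One small point: the theorem assumes only \labelcref{alpha1}--\labelcref{alpha3}, so you should not invoke \labelcref{alpha4} to bound $\alpha'$; a convex function on $[0,1]$ need not have a finite one-sided derivative at the endpoint $\rho=0$, so the subgradient inequality $\alpha(\rho_n)\geq \alpha(\rho)+\alpha'(\rho)(\rho_n-\rho)$ can fail pointwise where $\rho=0$. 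This is easily repaired without extra hypotheses by replacing the subgradient step with the observation that $\eta \mapsto \int_\Omega \alpha(\eta)|\vect{u}|^2\,\dx$ is convex (since $\alpha$ is convex and $|\vect{u}|^2\geq 0$) and strongly lower semicontinuous, hence weakly lower semicontinuous, and weak-* convergence in $L^\infty(\Omega)$ implies weak convergence in $L^2(\Omega)$ on the bounded domain $\Omega$.
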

Although we are guaranteed the existence of the minimizer from \cref{th:BPexistence}, the lack of convexity in the functional $J$ means that \cref{borrvallmin} can support multiple minimizers.

We now define the following forms:
\begin{align}
a(\vect{u},\vect{v}; \rho)&\coloneqq\int_\Omega \alpha(\rho) \vect{u} \cdot \vect{v} + \nu \nabla \vect{u} : \nabla \vect{v} \; \dx, \quad 
l(\vect{v}) \coloneqq \int_\Omega \vect{f} \cdot \vect{v} \; \dx, \\
c(\rho, \eta; \vect{u})&\coloneqq\frac{1}{2} \int_\Omega  \alpha'(\rho)|\vect{u}|^2  \eta \; \dx,\\
b(\vect{u},p)&\coloneqq-\int_\Omega  p \, \divv{\vect{u}} \; \dx.
\end{align}

\begin{definition}[Strict minimizer]
Let $Z$ be a Banach space and suppose that $z_0 \in Z$ is a local or global minimizer of the functional $J :Z\to \mathbb{R}$. We say that $z_0$ is a strict minimizer if there exists  an open neighborhood $E \subset Z$ of $z_0$ such that $J(z_0) < J(z)$ for all $z \neq z_0$, $z \in E$.
\end{definition} 
\begin{definition}[Isolated minimizer]
Let $Z$ be a Banach space and suppose that $z_0 \in Z$ is a local or global minimizer of the functional $J :Z\to \mathbb{R}$. We say that $z_0$ is isolated if there exists  an open neighborhood  $E \subset Z$  of $z_0$ such that there are no other minimizers contained in $E$.
\end{definition} 
\begin{remark}
If $z$ is an isolated minimizer, then it is also a strict minimizer. 
\end{remark} 

The proof of the following proposition on first-order optimality conditions for isolated minimizers of \cref{borrvallmin} can be found in Papadopoulos and S\"uli \cite[Prop.~2]{Papadopoulos2021b}.
\begin{proposition}
\label{prop:pressureexistence}
Suppose that $\Omega \subset \mathbb{R}^d$ is a Lipschitz domain, with $d\in\{2,3\}$ and $\alpha$ satisfies properties \ref{alpha1}--\ref{alpha4}.  Consider a minimizer  $(\vect{u}, \rho) \in H^1_{\vect{g}, \mathrm{div}}(\Omega)^d \times C_\gamma$ whose existence is guaranteed by \cref{th:BPexistence}. Then, there exists a unique Lagrange multiplier $p \in L^2_0(\Omega)$ such that the following necessary first-order optimality conditions hold: 
\begin{alignat}{2}
a(\vect{u},\vect{v}; \rho) + b(\vect{v}, p) &= l(\vect{v}) \;\; && \text{for all} \;\; \vect{v} \in H^1_0(\Omega)^d, \tag{FOC1} \label{FOC1}\\
b(\vect{u},q) &=0 \;\; &&  \text{for all} \;\; q \in L^2_0(\Omega), \tag{FOC2} \label{FOC2}\\
c(\rho,\eta-\rho; \vect{u}) &\geq 0 \;\; && \text{for all} \;\; \eta \in C_\gamma.  \tag{FOC3} \label{FOC3}
\end{alignat}
\end{proposition}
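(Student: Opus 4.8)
The plan is to read off the three conditions from the variational structure of the minimization, perturbing the velocity and the material distribution independently and then recovering the pressure as the Lagrange multiplier associated with the divergence constraint. Condition \eqref{FOC2} is immediate: the minimizer lies in $\Hu$, so $\divv{\vect{u}} = 0$ a.e.\ in $\Omega$ and hence $b(\vect{u},q) = -\int_\Omega q\,\divv{\vect{u}}\,\dx = 0$ for every $q \in L^2_0(\Omega)$.

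For \eqref{FOC3} I would vary the material distribution with the velocity frozen. Fixing $\eta \in C_\gamma$ and setting $\rho_t \coloneqq \rho + t(\eta - \rho)$, convexity of $C_\gamma$ gives $\rho_t \in C_\gamma$ for $t \in [0,1]$, and minimality gives $J(\vect{u}, \rho_t) \geq J(\vect{u}, \rho)$. As the viscous and body-force terms do not involve $\rho$, the difference quotient collapses to $\tfrac{1}{2t}\int_\Omega (\alpha(\rho_t) - \alpha(\rho))|\vect{u}|^2\,\dx$. Invoking \ref{alpha4} so that $\alpha'$ is bounded on $[0,1]$, together with $|\eta - \rho| \leq 1$ and $|\vect{u}|^2 \in L^1(\Omega)$, the integrand is dominated uniformly in $t$, and dominated convergence yields the one-sided derivative $c(\rho, \eta - \rho; \vect{u}) \geq 0$, which is precisely \eqref{FOC3}.

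The substantive condition is \eqref{FOC1}, which I would obtain in two stages. First, I perturb the velocity in admissible directions: for $\vect{v}$ in $V \coloneqq \{\vect{w} \in H^1_0(\Omega)^d : \divv{\vect{w}} = 0\}$, the curve $t \mapsto \vect{u} + t\vect{v}$ stays in $\Hu$, since the boundary trace and the divergence constraint are both preserved, so $t \mapsto J(\vect{u} + t\vect{v}, \rho)$ is a smooth scalar function minimized at $t = 0$; differentiating the quadratic-in-$\vect{u}$ functional gives $a(\vect{u}, \vect{v}; \rho) = l(\vect{v})$ for all $\vect{v} \in V$. Equivalently, the bounded linear functional $F(\vect{v}) \coloneqq a(\vect{u}, \vect{v}; \rho) - l(\vect{v})$ on $H^1_0(\Omega)^d$ annihilates $V = \ker(\divv{\cdot})$. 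Second, I recover the pressure: because $\Omega$ is a bounded Lipschitz domain, $\divv{\cdot} : H^1_0(\Omega)^d \to L^2_0(\Omega)$ is surjective (equivalently, it satisfies an inf--sup condition, via a Bogovskii-type right inverse). By de Rham's lemma, or the closed-range theorem applied to this surjection, every bounded functional annihilating $\ker(\divv{\cdot})$ is of the form $\vect{v} \mapsto \int_\Omega p\,\divv{\vect{v}}\,\dx$ for a unique $p \in L^2_0(\Omega)$. Applying this to $F$ gives $a(\vect{u}, \vect{v}; \rho) - l(\vect{v}) = \int_\Omega p\,\divv{\vect{v}}\,\dx = -b(\vect{v}, p)$, and rearranging produces \eqref{FOC1}; uniqueness of $p$ in $L^2_0(\Omega)$ is inherited from the same surjectivity.

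I expect the pressure-recovery stage to be the main obstacle, and in particular the surjectivity of $\divv{\cdot}$ onto $L^2_0(\Omega)$: this is where the Lipschitz regularity of $\Omega$ is essential and where the de Rham/closed-range machinery must be invoked with care. By contrast, the directional-derivative computations underlying \eqref{FOC3} and the first stage of \eqref{FOC1} are routine once the quadratic structure of $J$ and the dominated-convergence argument are in place.
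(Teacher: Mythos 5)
Your argument is correct, and it is essentially the standard route that the paper itself defers to: the paper does not reprove this proposition but cites Papadopoulos and S\"uli \cite[Prop.~2]{Papadopoulos2021b}, whose proof likewise obtains \eqref{FOC2} from membership in $\Hu$, \eqref{FOC3} and the divergence-free part of \eqref{FOC1} from one-sided and two-sided Gateaux derivatives of $J$ along admissible convex/linear perturbations, and then recovers the unique $p \in L^2_0(\Omega)$ from the surjectivity of $\divv{\cdot}: H^1_0(\Omega)^d \to L^2_0(\Omega)$ (de Rham/Bogovskii) on a bounded Lipschitz domain. No gaps; the only cosmetic point is that the sign of $p$ is fixed by convention when invoking de Rham, which you handle correctly.
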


\section{Discretization}
\label{sec:discretization}

In this section we fix our choice of discretization. We denote the finite element spaces for the velocity, pressure, and material distribution by $\vect{X}_h \subset \Hdiv$, $M_h \subset L^2_0(\Omega)$ and $C_{\gamma,h} \subset C_\gamma$, respectively. In the following we introduce the notation that is required to define the discretized optimization problem. 

Let $(\mathcal{T}_h)_{h \in (0,1]}$ denote a family of  triangulations of the domain $\Omega \subset \mathbb{R}^d$, $d \in \{2,3\}$, characterized by the mesh size $h\coloneqq\max_{K \in \mathcal{T}_h} h_K$, where $h_K$ is the diameter of the element $K \in \mathcal{T}_h$. We assume that $(\mathcal{T}_h)_{h \in (0,1]}$ satisfies:
\begin{enumerate}[label=({M}\arabic*)]
\item (Shape regularity). There exists constants $c_1, c_2 >0$ such that
\begin{align*}
c_1 h^d_K \leq |K| \leq c_2 h^d_K \;\;\text{for all} \; K \in \mathcal{T}_h.
\end{align*}
\label{ass:M1}
\end{enumerate}
Moreover,  we assume that  $(\mathcal{T}_h)_{h \in (0,1]}$ satisfies a submesh condition as found in Buffa and Ortner \cite[Assumption 2.1]{Buffa2009}. For a given $h$, let the set $\mathcal{F}_h$ denote the set of all facets of the triangulation $\mathcal{T}_h$ and $h_F$ represent the diameter of each facet $F \in \mathcal{F}_h$. We split the set of facets into the union $\mathcal{F}_h = \mathcal{F}^i_h \cup \mathcal{F}^\partial_h$ where $\mathcal{F}^i_h$ is the subset of interior facets and $\mathcal{F}^\partial_h$ collects all Dirichlet boundary facets $F \subset \partial \Omega$.  We note that we only consider Dirichlet boundary conditions in this work, and, hence, all the facets on the boundary are elements of the set $\mathcal{F}^\partial_h$.  We assume the following:
\begin{enumerate}[label=({M}\arabic*)]
\setcounter{enumi}{1}
\item (Contact regularity). There exists a constant $c_1>0$ such that 
\begin{align*}
c_1 h^{d-1}_K \leq \mathcal{H}^{d-1}(F) \;\;\text{for all} \;  F \in \mathcal{F}_h,\, K \in \mathcal{T}_h \;\; \text{such that}\;\; F\subset \bar K;
\end{align*}
\label{ass:M2}
\item (Boundary regularity). There exists a constant $c_1>0$ such that:
\begin{align*}
c_1 h \leq h_F \;\;\text{for all} \;  F \in \mathcal{F}^\partial_h.
\end{align*}
\label{ass:M3}
\end{enumerate}
If $F \in \mathcal{F}^i_h$, then $F= \overline{\partial K^+} \cap \overline{\partial K^-}$ for two elements $K^-, K^+ \in \mathcal{T}_h$. We write $\vect{n}^{+}_F$ and $\vect{n}^{-}_F$ to denote the outward normal unit vectors to the boundaries $\partial K^+$ and $\partial K^-$, respectively. If $F \in \mathcal{F}^\partial_h$, then $\vect{n}_F$ is the outer unit normal vector $\vect{n}$. We denote the space of discontinuous finite element functions with degree no higher than $k$ by 
\begin{align}
X_{\DG_k} \coloneqq\{v \in L^1(\Omega) : v|_K \in \mathcal{P}_k \; \text{for all} \; K \in \mathcal{T}_h \},
\end{align}
where $\mathcal{P}_k$ denotes the set of polynomials of order no higher than $k$. Let $\vect{\phi} \in (X_{\DG_k})^d$ and $\tens{\Phi} \in (X_{\DG_k})^{d \times d}$ be any piecewise vector or matrix-valued function, respectively with traces from within the interior of $K^\pm$ denoted by $\vect{\phi}^\pm$ and $\tens{\Phi}^\pm$, respectively. We define the jump $\lsb \cdot \rsb_F$ and the average $\lcb \cdot \rcb_F$ operators across interior facets $F\in \mathcal{F}^i_h$ by
\begin{align}
\lsb \vect{\phi} \rsb_F = \vect{\phi}^+ \otimes \vect{n}_F^+ + \vect{\phi}^- \otimes \vect{n}_F^- \quad \text{and} \quad \lcb \tens{\Phi} \rcb_F = \frac{1}{2}\left(\tens{\Phi}^+ + \tens{\Phi}^-\right).
\end{align}
 Here, $\otimes$ denotes the tensor product, i.e.~for two vectors $\vectt{a}, \vectt{b} \in \mathbb{R}^d$, $\vectt{a} \otimes \vectt{b} \in \mathbb{R}^{d \times d}$, $[\vectt{a} \otimes \vectt{b}]_{ij} = \vectt{a}_i \vectt{b}_j$.  If $F \in \mathcal{F}^\partial_h$, we set $\lsb \vect{\phi} \rsb_F = \vect{\phi} \otimes \vect{n}_F$ and $\lcb \tens{\Phi} \rcb_F = \tens{\Phi}$.  For any $F \in \mathcal{F}^\partial_h$, we define $\int_F |\lsb \vect{\phi} \rsb_F|^2 \, \mathrm{d}s = \int_F | \vect{\phi} |^2 \, \mathrm{d}s$. 

 In general functions $v \in X_{\DG_k}$ do not live in $H^1(\Omega)$ due to the jumps across the facets of the elements. However, on each element, $K \in \mathcal{T}_h$, $v|_K$ is a polynomial and, therefore, $v \in H^1(K)$.  We define the broken Sobolev space $H^1(\mathcal{T}_h)$ as:
\begin{align}
H^1(\mathcal{T}_h) \coloneqq \{ v \in L^1(\Omega) : v \in H^1(K) \; \text{for all} \; K \in \mathcal{T}_h\}.
\end{align}
Moreover, for a function $\vect{v} \in H^1(\mathcal{T}_h)^d$, we define the broken $H^1$-seminorm and norms as:
\begin{align}
| \vect{v} |^2_{H^1(\mathcal{T}_h)} &\coloneqq \sum_{K \in \mathcal{T}_h} \| \nabla \vect{v} \|^2_{L^2(K)} + \sum_{F \in \mathcal{F}^i_h} \int_F h_F^{-1} | \lsb \vect{v} \rsb_F|^2 \mathrm{d}s, \label{def:H1brokenSeminorm}\\
\| \vect{v} \|^2_{H^1(\mathcal{T}_h)} &\coloneqq \|\vect{v}\|^2_{L^2(\Omega)} + | \vect{v} |^2_{H^1(\mathcal{T}_h)},
\label{def:H1brokennorm}\\
\| \vect{v} \|^2_{H^1_{\vect{g}}(\mathcal{T}_h)} &\coloneqq \| \vect{v} \|^2_{H^1(\mathcal{T}_h)} + \sum_{F \in \mathcal{F}^\partial_h} \int_F h_F^{-1} | \lsb \vect{v} - \vect{g} \rsb_F|^2 \mathrm{d}s.
\label{def:H1gbrokennorm}
\end{align}

The two families of DG finite elements of interest for the velocity are the Brezzi--Douglas--Marini (BDM) finite element \cite{Brezzi1987, Brezzi1985} and the Raviart--Thomas (RT) finite element \cite{Raviart1977, Nedelec1980}. The $k$-th order BDM finite element is defined for $d=2$ in \cite[Sec.~2]{Brezzi1985} and for $d=3$ in \cite[Sec.~2]{Brezzi1987}. Similarly the $k$-th order RT finite element is defined in \cite[Sec.~3]{Raviart1977} and \cite[Sec.~2]{Nedelec1980} for $d=2$ and $d=3$, respectively. The finite element spaces induced by the $k$-th order BDM and RT finite elements are denoted by $\vect{X}_{\mathrm{BDM}_k}$ and $\vect{X}_{\mathrm{RT}_k}$, respectively. We note that $\vect{X}_{\mathrm{RT}_k} \subset \vect{X}_{\mathrm{BDM}_k} \subset \vect{Z}_h$ where, for a given $k \geq 1$,
\begin{align}
\vect{Z}_{h} &\coloneqq \{ \vect{v} \in (X_{\DG_k})^d : \, \divv{\vect{v}} \in X_{\DG_{k-1}} \cap L^2(\Omega)\}.
\end{align}
We note that $\vect{Z}_h \subset \Hdiv$ and $\vect{Z}_h \subset H^1(\mathcal{T}_h)^d$. 

We define the following subspaces of $\vect{X}_h$:
\begin{align}
\vect{X}_{h, 0} &\coloneqq \{ \vect{v} \in \vect{X}_{h} :\vect{v}\cdot \vect{n} = 0 \; \text{on} \; \partial \Omega \},\\
\vect{X}_{h, \vect{g}} &\coloneqq \{ \vect{v} \in \vect{X}_{h} :(\vect{v} - \vect{g}) \cdot \vect{n} = 0 \; \text{on} \; \partial \Omega \}.
\end{align}
In general the boundary data $\vect{g}$ cannot be represented in the finite element space. Hence we instead approximate the boundary data with a finite element function $\vect{g}_h$ (which can be represented) and assume that
\begin{enumerate}[label=({F}\arabic*)]
\item $h^{-1} \|\vect{g} - \vect{g}_h\|_{L^2(\partial \Omega)} \to 0$ as $h \to 0$. 
\label{ass:gh}
\end{enumerate}
We also assume that:
\begin{enumerate}[label=({F}\arabic*)]
\itemsep=0pt
\setcounter{enumi}{1}
\item $\vect{X}_{h,0}$ and $M_h$ satisfy the following inf-sup condition for some $c_b > 0$,  independent of $h$, 
\begin{align}
c_b \leq \inf_{q_h \in M_h \backslash \{0\}} \sup_{\vect{v}_h \in \vect{X}_{h,0} \backslash \{0\}} \frac{b(\vect{v}_h,q_h)}{\|\vect{v}_h\|_{H^1(\mathcal{T}_h)}\|q_h\|_{L^2(\Omega)}}.
\end{align}
\label{ass:discreteinfsup}
\item The finite element spaces are dense in their respective function spaces, i.e., for any $(\vect{v}, \eta, q) \in H^1(\mathcal{T}_h)^d \times C_\gamma \times L^2_0(\Omega)$, \label{ass:dense}
\begin{align*}
\lim_{h \to 0} \inf_{\vect{w}_h \in \vect{X}_{h}}\|\vect{v}-\vect{w}_h\|_{H^1(\mathcal{T}_h)} &= \lim_{h\to 0} \inf_{\zeta_h \in C_{\gamma,h}} \|\eta - \zeta_h \|_{L^2(\Omega)} \\
&\indent = \lim_{h\to 0} \inf_{r_h \in M_h} \|q - r_h \|_{L^2(\Omega)} = 0.
\end{align*}
\end{enumerate}
\begin{remark}
The inf-sup \labelcref{ass:discreteinfsup} and density \labelcref{ass:dense} conditions are satisfied by either $\vect{X}_h = \vect{X}_{\mathrm{BDM}_k}$ or $\vect{X}_h = \vect{X}_{\mathrm{RT}_k}$ with the choice of the pressure finite element space $M_h = X_{\DG_{k-1}}$ \cite{Cockburn2007}. 
\end{remark}

\begin{remark}
Although $C_\gamma$ is not separable, we will only require the density of $C_{\gamma, h}$ in $C_\gamma$ with respect to the $L^2$-norm. 
\end{remark}

We now define the discrete Borrvall--Petersson power dissipation functional for a DG finite element discretization. Consider the functions $\vect{u}_h \in \vect{X}_h \subset \vect{Z}_h$ and $\rho_h \in C_{\gamma,h}$. We note that $J(\vect{u}_h, \rho_h)$ is ill-defined  as, in general, $\nabla \vect{u}_h \not\in L^2(\Omega)^d$. Hence, the term $\int_\Omega |\nabla \vect{u}_h|^2\,\dx$, as found in $J(\vect{u}_h,\rho_h)$, might not be finite.  Given a penalization parameter $\sigma > 0$, we define the discrete analogue $J_h$ as
\begin{align}
\begin{split}
J_h(\vect{u}_h, \rho_h)&\coloneqq \frac{1}{2} \int_\Omega \left(\alpha(\rho_h) |\vect{u}_h|^2 - 2\vect{f} \cdot \vect{u}_h\right)  \text{d}x
+ \frac{\nu}{2}  \sum_{K \in \mathcal{T}_h}\int_K  |\nabla \vect{u}_h|^2 \dx \\
& +   \frac{\nu}{2}  \sum_{F \in \mathcal{F}^i_h} \sigma h_F^{-1} \int_F  |\lsb \vect{u}_h\rsb_F|^2 \mathrm{d}s
 -   \nu  \sum_{F \in \mathcal{F}^i_h}\int_F \lcb \nabla \vect{u}_h \rcb_F : \lsb \vect{u}_h \rsb_F\, \mathrm{d}s\\
&+  \frac{\nu}{2}  \sum_{F \in \mathcal{F}^\partial_h} \sigma h_F^{-1} \int_F  |\lsb \vect{u}_h - \vect{g}_h \rsb_F|^2 \mathrm{d}s\\
& -  \nu  \sum_{F \in \mathcal{F}^\partial_h}\int_F \lcb \nabla \vect{u}_h \rcb_F : \lsb \vect{u}_h  - \vect{g}_h \rsb_F\, \mathrm{d}s.
\end{split}
\end{align}
\begin{remark}
This particular choice $J_h$ as the discrete analogue of $J$ is motivated by an interior penalty approach for DG formulations. In \cref{prop:FEMconvergence3} we prove that velocity minimizers  of $J_h$ satisfy a fluid momentum equation featuring terms that arise in the interior penalty DG discretization of the Stokes equations \cite{Cockburn2002, Gauger2019}.
\end{remark}
\begin{remark}
For any $\vect{v}_h \in \vect{Z}_h$ the terms $\lcb \nabla \vect{v}_h \rcb_F$ and $\lsb \vect{v}_h \rsb_F$ as they appear in $J_h$ are well-defined \cite[Sec.~3.1]{Arnold2002}.
\end{remark}
\begin{proposition}[Consistency of $J_h$]
\label{prop:consistencyJh}
Consider any $(\vect{v}, \eta) \in H^1_{\vect{g}}(\Omega)^d \times C_\gamma$ such that $\vect{v} \in H^r(\Omega)^d$ for some $r>3/2$. Then, $J_h$, $h > 0$ is consistent, i.e.
\begin{align}
J_h(\vect{v}, \eta) = J(\vect{v}, \eta).
\end{align}
\end{proposition}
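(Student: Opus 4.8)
The plan is to establish the identity by comparing $J_h(\vect{v},\eta)$ with $J(\vect{v},\eta)$ term by term, grouping the contributions of $J_h$ into (i) the zeroth-order volume integral, (ii) the broken Dirichlet energy, and (iii) the interior and boundary facet terms. The volume integral $\frac{1}{2}\int_\Omega(\alpha(\eta)|\vect{v}|^2 - 2\vect{f}\cdot\vect{v})\,\dx$ appears verbatim in both functionals, so it requires no argument. Since $\vect{v}\in H^1(\Omega)^d$ and the elements $K\in\mathcal{T}_h$ partition $\Omega$ up to a set of measure zero, we have $\nabla\vect{v}\in L^2(\Omega)^{d\times d}$ and $\sum_{K\in\mathcal{T}_h}\int_K|\nabla\vect{v}|^2\,\dx = \int_\Omega|\nabla\vect{v}|^2\,\dx$, so the broken energy in $J_h$ collapses to the conforming energy $\frac{\nu}{2}\int_\Omega|\nabla\vect{v}|^2\,\dx$ of $J$. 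It then remains only to show that every facet integral in $J_h$ vanishes.

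Before doing so I would verify that the facet integrals are well-defined, which is precisely the role of the hypothesis $r>3/2$ and is the main (albeit mild) technical point. Since $\vect{v}\in H^r(\Omega)^d$ we have $\nabla\vect{v}\in H^{r-1}(\Omega)^{d\times d}$ with $r-1>1/2$, so the trace theorem yields a trace of $\nabla\vect{v}$ on each facet $F$ lying in $H^{r-3/2}(F)^{d\times d}\hookrightarrow L^2(F)^{d\times d}$; hence $\lcb\nabla\vect{v}\rcb_F$ is a genuine $L^2(F)$ function and each consistency integral $\int_F\lcb\nabla\vect{v}\rcb_F:\lsb\vect{v}\rsb_F\,\mathrm{d}s$ is finite. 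I would then exploit the $H^1$-conformity to annihilate the interior facet terms: because $\vect{v}\in H^1(\Omega)^d$, its trace is single-valued across each interior facet $F=\overline{\partial K^+}\cap\overline{\partial K^-}$, so $\vect{v}^+=\vect{v}^-$, and since $\vect{n}_F^+=-\vect{n}_F^-$ the jump is $\lsb\vect{v}\rsb_F=\vect{v}^+\otimes\vect{n}_F^+ + \vect{v}^-\otimes\vect{n}_F^- = \vect{0}$. This simultaneously kills the interior penalty term $\frac{\nu}{2}\sum_{F\in\mathcal{F}^i_h}\sigma h_F^{-1}\int_F|\lsb\vect{v}\rsb_F|^2\,\mathrm{d}s$ and the interior consistency term $-\nu\sum_{F\in\mathcal{F}^i_h}\int_F\lcb\nabla\vect{v}\rcb_F:\lsb\vect{v}\rsb_F\,\mathrm{d}s$.

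For the boundary facets I would use that $\vect{v}\in H^1_{\vect{g}}(\Omega)^d$, so $\vect{v}=\vect{g}$ on $\partial\Omega$ in the trace sense; consequently $\lsb\vect{v}-\vect{g}\rsb_F=(\vect{v}-\vect{g})\otimes\vect{n}_F=\vect{0}$ on each $F\in\mathcal{F}^\partial_h$, which removes both the boundary penalty and boundary consistency terms and closes the identity $J_h(\vect{v},\eta)=J(\vect{v},\eta)$. The one bookkeeping point deserving care is that the boundary terms of $J_h$ are written with the discrete datum $\vect{g}_h$ rather than the exact $\vect{g}$: for the exact equality to hold at fixed $h$ one must evaluate these terms with the true boundary datum $\vect{g}$ of the conforming function $\vect{v}$ (equivalently, with $\vect{g}_h$ coinciding with $\vect{g}$ on $\mathcal{F}^\partial_h$), since otherwise $\lsb\vect{v}-\vect{g}_h\rsb_F=\lsb\vect{g}-\vect{g}_h\rsb_F$ leaves a boundary remainder that is only controlled asymptotically through the approximation hypothesis \labelcref{ass:gh}. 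Thus the substance of the proof is the trace-regularity check supplied by $r>3/2$ together with the conformity-induced vanishing of the jumps, the remaining manipulations being routine.
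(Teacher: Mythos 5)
Your proof is correct and follows essentially the same route as the paper's: the volume and broken-gradient terms coincide because $\vect{v}\in H^1(\Omega)^d$, the interior facet terms vanish because the jump of an $H^1$-conforming function is zero, the boundary terms vanish because $\vect{v}=\vect{g}$ in the trace sense, and $r>3/2$ is used only to make the traces of $\nabla\vect{v}$ (and hence the consistency integrals) well-defined. Your closing observation is a fair catch that the paper's own proof elides: as written, the boundary terms of $J_h$ involve the discrete datum $\vect{g}_h$, so the exact identity $J_h(\vect{v},\eta)=J(\vect{v},\eta)$ at fixed $h$ requires reading those terms with $\vect{g}$ in place of $\vect{g}_h$ (as the paper's proof implicitly does), the discrepancy otherwise being only asymptotically controlled via \labelcref{ass:gh}.
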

\begin{proof}
Since $\vect{v} \in H^r(\Omega)^d$, for $r > 3/2$, we note that $J_h(\vect{v}, \eta)$ is well-defined and there can be no jumps in $\vect{v}$ across elements. Hence, for all $F \in \mathcal{F}^i_h$, integrals involving $\lsb \vect{v} \rsb_F$ are equal to zero. Moreover $\vect{v} = \vect{g}$ on $\partial \Omega$ and, therefore, $\int_F |\lsb \vect{v}  - \vect{g} \rsb_F|^2\, \mathrm{d}s = 0$ for all $F \in \mathcal{F}^\partial_h$.  Hence,
\begin{align}
J_h(\vect{v}, \eta) = \frac{1}{2} \int_\Omega \left(\alpha(\eta) |\vect{v}|^2 - 2\vect{f} \cdot \vect{v}\right)  \text{d}x + \frac{\nu}{2}  \sum_{K \in \mathcal{T}_h}\int_K  |\nabla \vect{v}|^2 \dx = J(\vect{v},\eta).
\end{align}
\end{proof}

For a sufficiently large penalization parameter $\sigma > 0$, we define the broken form $a_h(\vect{u}, \vect{v}; \rho)$ by
\begin{align}
\begin{split}
a_h(\vect{u}, \vect{v}; \rho)&\coloneqq \sum_{K\in \mathcal{T}_h} \int_K  \alpha(\rho) \vect{u} \cdot \vect{v}+ \nabla \vect{u} : \nabla \vect{v} \; \dx +  \nu  \sum_{F \in \mathcal{F}_h} \sigma h_F^{-1} \int_F  \lsb \vect{u}\rsb_F : \lsb \vect{v}\rsb_F \mathrm{d}s\\
& -  \nu  \sum_{F \in \mathcal{F}_h}\int_F \lcb \nabla \vect{u} \rcb_F  : \lsb \vect{v} \rsb_F \mathrm{d}s -  \nu  \sum_{F \in \mathcal{F}_h} \int_F  \lsb \vect{u} \rsb_F : \lcb \nabla \vect{v} \rcb_F \mathrm{d}s,
\end{split}
\end{align}
and the linear functional $l_h(\vect{v}; \vect{g})$ as
\begin{align}
\begin{split}
l_h(\vect{v}; \vect{g})&\coloneqq  \int_\Omega \vect{f} \cdot \vect{v} \; \dx \\
& \indent +  \nu  \sum_{F \in \mathcal{F}^\partial_h} \sigma h_F^{-1} \int_F \lsb \vect{g} \rsb_F : \lsb \vect{v} \rsb_F \; \mathrm{d}s -  \nu  \sum_{F \in \mathcal{F}^\partial_h} \int_F\lsb \vect{g} \rsb_F: \lcb \nabla \vect{v} \rcb_F \,\mathrm{d}s.
\end{split}
\end{align}

\begin{proposition}[Consistency of $a_h$]
\label{prop:consistencyah}
Suppose that $(\vect{u}, \rho) \in \Hu \times C_\gamma$ is a minimizer of \cref{borrvallmin} and let $p \in L^2_0(\Omega)$ denote the Lagrange multiplier such that $(\vect{u}, \rho, p)$ satisfy \cref{FOC1}--\cref{FOC3}. Moreover, assume that $\vect{u} \in H^r(\Omega)^d$ for some $r>3/2$. Then, for all $\vect{v}_h  \in H^1(\mathcal{T}_h)^d \cap \Hzdiv$, we have that
\begin{align}
a_h(\vect{u}, \vect{v}_h; \rho) + b(\vect{v}_h, p) = l_h(\vect{v}_h; \vect{g}). \label{eq:consistency}
\end{align}
\end{proposition}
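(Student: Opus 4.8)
The plan is to establish \cref{eq:consistency} as a standard Galerkin consistency identity: because the exact velocity is smooth enough to be single-valued across facets, the penalty and symmetrisation terms of $a_h$ that involve $\vect{u}$ either vanish (on interior facets) or reproduce precisely the boundary-data terms of $l_h$, and whatever remains is matched by integrating the momentum equation against the broken test function $\vect{v}_h$.

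First I would exploit the regularity hypothesis $\vect{u}\in H^r(\Omega)^d$, $r>3/2$. By Sobolev embedding ($r>d/2$ for $d\in\{2,3\}$) the velocity $\vect{u}$ is continuous, so $\lsb\vect{u}\rsb_F=\vect{0}$ for every $F\in\mathcal{F}^i_h$, while on $F\in\mathcal{F}^\partial_h$ the trace identity $\vect{u}=\vect{g}$ gives $\lsb\vect{u}\rsb_F=\lsb\vect{g}\rsb_F$; moreover $r>3/2$ guarantees that $\lcb\nabla\vect{u}\rcb_F$ has well-defined $L^2$ facet traces, so every facet term is finite. Substituting these into $a_h(\vect{u},\vect{v}_h;\rho)$ collapses the penalty term and the $\lsb\vect{u}\rsb_F:\lcb\nabla\vect{v}_h\rcb_F$ term to sums over $\mathcal{F}^\partial_h$ involving $\lsb\vect{g}\rsb_F$, and these are exactly the two boundary contributions of $l_h(\vect{v}_h;\vect{g})$. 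Hence proving \cref{eq:consistency} reduces to the interior identity
\begin{align*}
\sum_{K\in\mathcal{T}_h}\int_K \alpha(\rho)\vect{u}\cdot\vect{v}_h + \nu\nabla\vect{u}:\nabla\vect{v}_h\,\dx - \nu\sum_{F\in\mathcal{F}_h}\int_F\lcb\nabla\vect{u}\rcb_F:\lsb\vect{v}_h\rsb_F\,\mathrm{d}s + b(\vect{v}_h,p) = l(\vect{v}_h).
\end{align*}

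Next I would pass to the strong form of the optimality system. Restricting \cref{FOC1} to test functions in $C_c^\infty(\Omega)^d\subset H^1_0(\Omega)^d$ shows that $-\nu\Delta\vect{u}+\nabla p + \alpha(\rho)\vect{u}=\vect{f}$ holds in the distributional sense. The key structural observation is that although neither $\Delta\vect{u}$ (only $\vect{u}\in H^{3/2+}$) nor $\nabla p$ (only $p\in L^2_0(\Omega)$) is individually in $L^2$, their combination is: setting $\tens{S}\coloneqq\nu\nabla\vect{u}-p\tens{I}$ one has $\tens{S}\in L^2(\Omega)^{d\times d}$ and $\divv{\tens{S}}=\nu\Delta\vect{u}-\nabla p=\alpha(\rho)\vect{u}-\vect{f}\in L^2(\Omega)^d$, so each row of $\tens{S}$ lies in $\Hdiv$ and $\tens{S}$ has a single-valued normal trace across every interior facet. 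I would then integrate $\sum_{K}\int_K\tens{S}:\nabla\vect{v}_h\,\dx$ by parts element-by-element (legitimate through the $H(\mathrm{div})$--$H^1$ duality pairing on each $K$, since $\vect{v}_h|_K\in H^1(K)^d$), substitute $\divv{\tens{S}}$, and rewrite the resulting sum of element-boundary integrals with the standard DG facet identity.

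The decisive cancellations come from the conformity of $\vect{v}_h$. Because $\tens{S}$ has continuous normal trace, its tensor jump drops out and only $\lcb\tens{S}\rcb_F:\lsb\vect{v}_h\rsb_F$ survives on interior facets; writing $\lcb\tens{S}\rcb_F:\lsb\vect{v}_h\rsb_F=\nu\lcb\nabla\vect{u}\rcb_F:\lsb\vect{v}_h\rsb_F-\lcb p\rcb_F\,\mathrm{tr}\,\lsb\vect{v}_h\rsb_F$, the pressure part vanishes because $\vect{v}_h\in\Hdiv$ forces $\mathrm{tr}\,\lsb\vect{v}_h\rsb_F=\vect{v}_h^+\!\cdot\vect{n}^+_F+\vect{v}_h^-\!\cdot\vect{n}^-_F=0$; on boundary facets the pressure part likewise disappears since $\vect{v}_h\cdot\vect{n}=0$ on $\partial\Omega$ for $\vect{v}_h\in\Hzdiv$. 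Collecting terms gives $\sum_{K}\int_K\tens{S}:\nabla\vect{v}_h\,\dx = -\int_\Omega(\alpha(\rho)\vect{u}-\vect{f})\cdot\vect{v}_h\,\dx + \nu\sum_{F\in\mathcal{F}_h}\int_F\lcb\nabla\vect{u}\rcb_F:\lsb\vect{v}_h\rsb_F\,\mathrm{d}s$, which upon expanding $\tens{S}:\nabla\vect{v}_h=\nu\nabla\vect{u}:\nabla\vect{v}_h-p\,\divv{\vect{v}_h}$ is exactly the interior identity displayed above. I expect the main obstacle to be precisely this low-regularity issue: one cannot integrate by parts with $\Delta\vect{u}$ and $\nabla p$ separately, so the argument must be organised around the combined stress $\tens{S}\in H(\mathrm{div};\Omega)$, with the pressure facet contributions annihilated by the normal-continuity and zero-normal-trace properties of the $H(\mathrm{div})$-conforming test function $\vect{v}_h$ rather than by any extra smoothness of $p$.
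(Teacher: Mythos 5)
Your proof is correct and reaches the identity by the same overall route as the paper: pass from \cref{FOC1} to the strong (distributional) form of the momentum equation, integrate by parts element-by-element against the broken test function, use the normal continuity of $\vect{v}_h \in \Hzdiv$ to annihilate the pressure facet contributions, and use $\vect{u} \in H^1_{\vect{g}}(\Omega)^d \cap H^r(\Omega)^d$ to collapse the penalty and $\lsb \vect{u} \rsb_F : \lcb \nabla \vect{v}_h \rcb_F$ terms of $a_h$ into the boundary terms of $l_h$. Where you genuinely depart from the paper is in how the second integration by parts is justified. The paper tests the distributional identity $\alpha(\rho)\vect{u} - \nu\Delta\vect{u} + \nabla p = \vect{f}$ against $\vect{v}_h$ ``by density of smooth functions in $L^2$'' and then integrates the $\nabla p$ term by parts on each element separately; taken literally, neither step is available, since $\Delta\vect{u}$ and $\nabla p$ are not individually in $L^2(\Omega)^d$ and the rows of $p\tens{I}$ are not in $\Hdiv$ for $p \in L^2_0(\Omega)$ alone. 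Your reorganisation around the combined stress $\tens{S} = \nu\nabla\vect{u} - p\tens{I}$, whose rows do lie in $\Hdiv$ because $\divv{\tens{S}} = \alpha(\rho)\vect{u} - \vect{f} \in L^2(\Omega)^d$, makes the element-wise Green's formula legitimate and turns the disappearance of the pressure facet terms into a clean consequence of the single-valued normal trace of $\tens{S}$ together with $\lsb \vect{v}_h \cdot \vect{n} \rsb_F = 0$; this is a tidier and more rigorous treatment of the one delicate step. The only cosmetic caveat is that ``$\lcb p \rcb_F$'' is not an $L^2(F)$ function for $p \in L^2_0(\Omega)$: the splitting of the facet pairing is better phrased as subtracting the well-defined $L^2(F)$ trace $\nu\lcb \nabla\vect{u}\rcb_F$ (available since $r > 3/2$) from the $H^{-1/2}$ normal trace of $\tens{S}$ and pairing what remains with the normal jump of $\vect{v}_h$, which vanishes. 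With that rephrasing your argument is complete and yields exactly the paper's conclusion.
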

\begin{proof}
By \cref{FOC1}, we have that, for all  $\vect{\phi} \in H^1_0(\Omega)^d$, and therefore, for all $\vect{\phi} \in C_c^\infty(\Omega)^d$,
\begin{align}
a(\vect{u}, \vect{\phi}; \rho) + b(\vect{\phi}, p) = l(\vect{\phi}), 
\end{align}
where $C_c^\infty(\Omega)$ denotes set of smooth and compactly supported functions in $\Omega$. By an integration by parts, we see that
\begin{align}
\int_\Omega \alpha(\rho) \vect{u} \cdot \vect{\phi} - \nu \Delta \vect{u} \cdot \vect{\phi} + \nabla p \cdot \vect{\phi} \, \dx = \int_\Omega \vect{f} \cdot \vect{\phi} \, \dx. \label{eq:distributional}
\end{align}
We note that $\int_\Omega \vect{\psi} \cdot \vect{\phi} \, \dx$ is well-defined for any $\vect{\psi} \in (C_c^\infty(\Omega)^d)^*$ (the space of distributions). Hence, \cref{eq:distributional} is well-defined since $\alpha(\rho) \vect{u}$, $\Delta \vect{u}$, $\nabla p$, $\vect{f} \in (C_c^\infty(\Omega)^d)^*$. 

As the set of smooth functions is dense in $L^2(\Omega)$, we can test \cref{eq:distributional} against any $\vect{v}_h  \in H^1(\mathcal{T}_h)^d \cap \Hzdiv \subset L^2(\Omega)^d$. Thus, by performing a second integration by parts, we have that
\begin{align}
\begin{split}
&\sum_{K \in \mathcal{T}_h} \int_K \alpha(\rho)\vect{u} \cdot \vect{v}_h + \nu \nabla \vect{u} : \nabla \vect{v}_h -p \, \divv{\vect{v}_h} \, \dx  \\
&\indent -  \nu  \sum_{F \in \mathcal{F}_h} \int_{F} \lcb \nabla \vect{u}\rcb_F : \lsb \vect{v}_h \rsb_F \, \mathrm{d}s = \int_\Omega \vect{f} \cdot \vect{v}_h \, \dx.
\end{split}
\end{align}
The element-wise surface integrals arising by the integration by parts of the $\nabla p$ term drop out due to the continuity of $\vect{v}_h \cdot \vect{n}$ across elements for all $\vect{v}_h \in \Hzdiv$. Similarly the boundary surface integrals drop out since $\vect{v}_h \cdot \vect{n} = 0$ on $\partial \Omega$. As $\vect{u} \in \Hu$, for all $F \in \mathcal{F}^i_h$, we have that $\lsb \vect{u} \rsb_F = 0$ and for all $F \in \mathcal{F}^\partial_h$, $\lsb \vect{u} \rsb_F = \lsb \vect{g} \rsb_F$. As $\vect{u} \in H^r(\Omega)^d$, for some $r >3/2$, the traces of $\nabla \vect{u}$ on $F \in \mathcal{F}_h$ are well-defined. We conclude that \cref{eq:consistency} holds. 
\end{proof}

\begin{proposition}[Coercivity and boundedness of $a_h$]
\label{prop:coercivity}
There exists a $\sigma_0 > 0$, such that for all $\sigma \geq \sigma_0$, $\vect{w}_h, \vect{u}_h \in \vect{X}_h$ and $\eta \in C_\gamma$, there exists constants $c_a, C_a>0$ such that
\begin{align}
c_a \|\vect{w}_h \|^2_{H^1(\mathcal{T}_h)} &\leq a_h(\vect{w}_h, \vect{w}_h; \eta),\\
a_h(\vect{w}_h, \vect{u}_h; \eta)&\leq C_a \|\vect{w}_h \|_{H^1(\mathcal{T}_h)} \|\vect{u}_h \|_{H^1(\mathcal{T}_h)}.
\end{align}
\end{proposition}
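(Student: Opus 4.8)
The plan is to treat $a_h$ as a standard symmetric interior penalty form and to prove both estimates using a discrete (inverse) trace inequality together with Young's inequality. The single tool underpinning everything is the trace inequality for piecewise polynomials: under the shape-, contact-, and boundary-regularity assumptions \cref{ass:M1,ass:M2,ass:M3}, there is a constant $C_{\mathrm{tr}}>0$, independent of $h$, such that for every facet $F\in\mathcal{F}_h$ and every $\vect{v}_h\in\vect{X}_h$,
\[
h_F\int_F|\lcb\nabla\vect{v}_h\rcb_F|^2\,\mathrm{d}s\le C_{\mathrm{tr}}\sum_{K:\,F\subset\bar K}\|\nabla\vect{v}_h\|^2_{L^2(K)}.
\]
I would establish (or cite) this first, since it is what lets the facet integrals be controlled by element volume integrals; the equivalence $h_F\simeq h_K$ for faces of $K$, which converts the facet weight $h_F^{-1}$ into an element-wise weight, comes from \cref{ass:M2,ass:M3}.

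For coercivity, set $\vect{v}=\vect{w}_h$. The two consistency terms coincide, so that
\begin{align*}
a_h(\vect{w}_h,\vect{w}_h;\eta)&=\sum_{K\in\mathcal{T}_h}\int_K\alpha(\eta)|\vect{w}_h|^2+|\nabla\vect{w}_h|^2\,\dx+\nu\sigma\sum_{F\in\mathcal{F}_h}h_F^{-1}\int_F|\lsb\vect{w}_h\rsb_F|^2\,\mathrm{d}s\\
&\quad-2\nu\sum_{F\in\mathcal{F}_h}\int_F\lcb\nabla\vect{w}_h\rcb_F:\lsb\vect{w}_h\rsb_F\,\mathrm{d}s.
\end{align*}
The only term without a sign is the last one. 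I would bound it by Cauchy--Schwarz on each facet after inserting the weights $h_F^{\pm1/2}$, then apply the trace inequality above together with Young's inequality with a parameter $\epsilon>0$:
\begin{align*}
2\nu\Big|\sum_{F\in\mathcal{F}_h}\int_F\lcb\nabla\vect{w}_h\rcb_F:\lsb\vect{w}_h\rsb_F\,\mathrm{d}s\Big|&\le \epsilon\,\nu C_{\mathrm{tr}}'\sum_{K\in\mathcal{T}_h}\|\nabla\vect{w}_h\|^2_{L^2(K)}\\
&\quad+\frac{\nu}{\epsilon}\sum_{F\in\mathcal{F}_h}h_F^{-1}\int_F|\lsb\vect{w}_h\rsb_F|^2\,\mathrm{d}s,
\end{align*}
where $C_{\mathrm{tr}}'$ absorbs $C_{\mathrm{tr}}$ and the uniformly bounded number of elements meeting a facet. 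Choosing $\epsilon$ small enough that $\epsilon\nu C_{\mathrm{tr}}'<1$, and then $\sigma_0$ large enough that $\nu\sigma_0-\nu/\epsilon>0$, leaves a strictly positive multiple of $\sum_K\|\nabla\vect{w}_h\|^2_{L^2(K)}+\sum_{F\in\mathcal{F}_h}h_F^{-1}\int_F|\lsb\vect{w}_h\rsb_F|^2$. Since $\alpha(\eta)\ge\underline{\alpha}\ge0$ by \cref{alpha1}, the zeroth-order term is nonnegative but may vanish when $\underline{\alpha}=0$; to control $\|\vect{w}_h\|_{L^2(\Omega)}$ regardless, I would invoke a discrete Poincar\'e--Friedrichs inequality for piecewise-polynomial functions, bounding $\|\vect{w}_h\|^2_{L^2(\Omega)}$ by $\sum_K\|\nabla\vect{w}_h\|^2_{L^2(K)}+\sum_{F\in\mathcal{F}_h}h_F^{-1}\int_F|\lsb\vect{w}_h\rsb_F|^2$. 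Crucially, the sum over \emph{all} facets (including $\mathcal{F}^\partial_h$) is exactly what $a_h$ controls, because its facet sums run over $\mathcal{F}_h$. Recalling that the seminorm $|\cdot|_{H^1(\mathcal{T}_h)}$ only uses interior-facet jumps, one then obtains $c_a\|\vect{w}_h\|^2_{H^1(\mathcal{T}_h)}\le a_h(\vect{w}_h,\vect{w}_h;\eta)$.

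Boundedness is routine once the trace inequality is available: applying Cauchy--Schwarz termwise, the zeroth-order term is bounded by $\overline{\alpha}\|\vect{w}_h\|_{L^2(\Omega)}\|\vect{u}_h\|_{L^2(\Omega)}$ using \cref{alpha1}, the volume gradient term by $\sum_K\|\nabla\vect{w}_h\|_{L^2(K)}\|\nabla\vect{u}_h\|_{L^2(K)}$, and the penalty term by $\nu\sigma$ times the product of the weighted jump factors. The two consistency terms are handled by splitting $h_F^{1/2}\cdot h_F^{-1/2}$ and using the trace inequality to pass from $\lcb\nabla\,\cdot\,\rcb_F$ to element gradients; summing and applying a discrete Cauchy--Schwarz inequality over facets and elements yields the bound with $C_a$ depending on $\nu$, $\sigma$, $\overline{\alpha}$, and $C_{\mathrm{tr}}$. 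The crux of the whole argument is the coercivity estimate, specifically taming the consistency term $-2\nu\sum_F\int_F\lcb\nabla\vect{w}_h\rcb_F:\lsb\vect{w}_h\rsb_F$: the argument hinges on the $h$-uniform trace constant $C_{\mathrm{tr}}$ and on choosing $\sigma_0$ large enough relative to it, which is precisely why the statement only asserts coercivity for $\sigma\ge\sigma_0$. A secondary but genuine subtlety is securing the $L^2$ part of the norm when $\underline{\alpha}=0$, which forces the use of the broken Poincar\'e--Friedrichs inequality and relies on $a_h$ penalizing boundary-facet jumps as well as interior ones.
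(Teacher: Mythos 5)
Your argument is correct and is essentially the paper's approach in expanded form: the paper simply observes $0\le\alpha(\eta)\le\bar\alpha$ and cites the classical interior-penalty coercivity and boundedness results of Arnold, Brezzi, Cockburn, and Marini, which are proved exactly by the discrete trace inequality, Young's inequality with a $\sigma_0$ chosen large relative to the trace constant, and a broken Poincar\'e--Friedrichs inequality as you describe. Your explicit treatment of the $L^2$ term when $\underline{\alpha}=0$ is a detail the paper leaves implicit, but it is the standard route and not a departure.
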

\begin{proof}
We note that, by assumption \labelcref{alpha1}, $0 \leq \alpha(\eta) \leq \bar \alpha$ for all $\eta \in C_\gamma$. Hence, the result follows from classical coercivity and boundedness results for DG discretizations for interior penalty methods \cite[Sec.~4.1--4.2]{Arnold2002}. 
\end{proof}

\begin{definition}
We define the spaces $\vect{U}_{h, \vect{g}_h}$ and $\vect{U}_{h, 0}$ as:
\begin{align}
\vect{U}_{h, \vect{g}_h} &\coloneqq \{ \vect{u} \in \vect{X}_{h, \vect{g}_h} : b(\vect{u}_h, q_h) = 0 \; \text{for all} \; q_h \in X_{\DG_{k-1}}\}\\
\vect{U}_{h, 0} &\coloneqq \{ \vect{u} \in \vect{X}_{h, 0} : b(\vect{u}_h, q_h) = 0 \; \text{for all} \; q_h \in X_{\DG_{k-1}}\}.
\end{align}
\end{definition}

In the following lemma we provide the proof that functions $\vect{v}_h \in \vect{U}_{h,0}$ and $\vect{v}_h \in \vect{U}_{\vect{g}_h,0}$ are pointwise divergence-free.
\begin{lemma}[Pointwise divergence-free]
\label{lem:pointwisedivfree}
Suppose that $\vect{X}_h \subset \vect{Z}_h$. Consider a function $\vect{v}_h \in \vect{U}_{h,0}$ or $\vect{v}_h \in \vect{U}_{h,\vect{g}_h}$. Then, $\divv{\vect{v}_h} = 0$ a.e.~in $\Omega$.
\end{lemma}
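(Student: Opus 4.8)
The plan is to exploit the self-testing structure afforded by the inclusion $\vect{X}_h \subset \vect{Z}_h$: the divergence of an admissible velocity is itself an admissible test function for the bilinear form $b$, so testing $b$ against it forces the divergence to vanish.

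First I would record the regularity of the divergence. Since $\vect{v}_h \in \vect{X}_h \subset \vect{Z}_h$ and, by definition, $\vect{Z}_h = \{ \vect{v} \in (X_{\DG_k})^d : \divv{\vect{v}} \in X_{\DG_{k-1}} \cap L^2(\Omega)\}$, the divergence $\divv{\vect{v}_h}$ is an $L^2(\Omega)$ function that lies in the discrete space $X_{\DG_{k-1}}$. It is worth noting \emph{why} the distributional divergence agrees with the elementwise polynomial divergence here: membership of $\vect{v}_h$ in $\Hdiv$ (which holds because $\vect{Z}_h \subset \Hdiv$) is equivalent to continuity of the normal trace $\vect{v}_h \cdot \vect{n}$ across interior facets, and this continuity removes any facet (distributional) contribution to $\divv{\vect{v}_h}$, leaving a genuine piecewise polynomial of degree at most $k-1$.

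Next I would invoke the defining constraint of the spaces. Whether $\vect{v}_h \in \vect{U}_{h,0}$ or $\vect{v}_h \in \vect{U}_{h,\vect{g}_h}$, the condition $b(\vect{v}_h, q_h) = 0$ holds for all $q_h \in X_{\DG_{k-1}}$, that is, $\int_\Omega q_h \, \divv{\vect{v}_h} \, \dx = 0$ for every $q_h \in X_{\DG_{k-1}}$. Because $\divv{\vect{v}_h}$ itself belongs to $X_{\DG_{k-1}}$ by the first step, it is a legitimate choice of test function; selecting $q_h = \divv{\vect{v}_h}$ gives $\| \divv{\vect{v}_h} \|^2_{L^2(\Omega)} = 0$, and hence $\divv{\vect{v}_h} = 0$ a.e.\ in $\Omega$.

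There is essentially no hard step here, but the one point requiring care is the first: confirming that the divergence lands in the discrete pressure test space $X_{\DG_{k-1}}$ rather than merely in $L^2(\Omega)$. This is exactly what the definition of $\vect{Z}_h$ guarantees, and it is what makes the ``test against the divergence'' trick available. I would stress that the argument is identical for both $\vect{U}_{h,0}$ and $\vect{U}_{h,\vect{g}_h}$: the boundary condition imposed on $\vect{v}_h \cdot \vect{n}$ plays no role, since the full (non-mean-zero) space $X_{\DG_{k-1}}$ serves as the test space in both definitions, so no compatibility condition on the boundary flux is needed.
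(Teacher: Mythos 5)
Your proposal is correct and follows the same argument as the paper: since $\vect{X}_h \subset \vect{Z}_h$ forces $\divv{\vect{v}_h} \in X_{\DG_{k-1}}$, one may take $q_h = \divv{\vect{v}_h}$ in the constraint $b(\vect{v}_h, q_h) = 0$ to conclude $\|\divv{\vect{v}_h}\|_{L^2(\Omega)} = 0$. The extra remarks on normal-trace continuity and on the test space being the full (non-mean-zero) $X_{\DG_{k-1}}$ are accurate elaborations but do not change the argument.
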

\begin{proof}
Since $\vect{U}_{h,0}, \vect{U}_{h,\vect{g}_h} \subset \vect{X}_h \subset \vect{Z}_h$ then, by definition, $\divv{\vect{v}_h} \in X_{\DG_{k-1}}$. Hence, there exists a $q_h \in X_{\DG_{k-1}}$ such that $q_h = \divv{\vect{v}_h}$. Therefore,
\begin{align}
b(\vect{v}_h, q_h) =  -  \|\divv{\vect{v}_h}\|^2_{L^2(\Omega)} = 0,
\end{align}
which implies that $\divv{\vect{v}_h} = 0$ a.e.~in $\Omega$.
\end{proof}

To prove the convergence of a DG finite element method, we require the existence of sequences in $\vect{U}_{h, \vect{g}_h}$ that converge strongly to $\vect{u}$. 
\begin{lemma}[Strongly converging sequences]
\label{lemma:strongv}
Suppose that \labelcref{ass:gh}--\labelcref{ass:dense} hold and $\vect{X}_h \subset \vect{Z}_h$. Consider any minimizer $(\vect{u},\rho) \in \Hu \times C_\gamma$ of \cref{borrvallmin}. Then, there exists a sequence of functions $(\vect{\tilde{u}}_h, \tilde{p}_h) \in \vect{U}_{\vect{g}_h,h}\times M_h$ such that $\|\vect{u}-\vect{\tilde{u}}_h\|_{H^1_{\vect{g}}(\mathcal{T}_h)} \to 0$ and $\|p - \tilde{p}_h\|_{L^2(\Omega)} \to 0$.
\end{lemma}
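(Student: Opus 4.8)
The plan is to construct $\vect{\tilde u}_h$ in three stages — a generic best approximation, a correction of the boundary normal trace, and a Fortin-type correction that restores the pointwise divergence-free constraint — and to obtain $\tilde p_h$ directly. The pressure is the easy part: since $p \in L^2_0(\Omega)$ and, by \labelcref{ass:dense}, $M_h$ is dense in $L^2_0(\Omega)$ in the $L^2$-norm, I take $\tilde p_h \in M_h$ to be a best $L^2$-approximation of $p$, so that $\|p - \tilde p_h\|_{L^2(\Omega)} \to 0$ with no further work. The rest of the argument concerns the velocity, where I interpret the target space $\vect U_{h, \vect g_h}$ as in the definition preceding \cref{lem:pointwisedivfree}.

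For the velocity I would first invoke \labelcref{ass:dense}: since $\vect u \in \Hu \subset H^1(\mathcal T_h)^d$, there is $\vect w_h \in \vect X_h$ with $\|\vect u - \vect w_h\|_{H^1(\mathcal T_h)} \to 0$. To handle the boundary terms later I would realize this density through a quasi-interpolant with local estimates, so that $\|\vect u - \vect w_h\|_{L^2(K)} \le C h_K \|\vect u\|_{H^1(\omega_K)}$ on each element patch $\omega_K$. Next I correct the boundary normal trace: using the Raviart--Thomas/Brezzi--Douglas--Marini structure (which underlies the hypothesis $\vect X_h \subset \vect Z_h$ of \cref{lem:pointwisedivfree}), I set the boundary normal-moment degrees of freedom equal to those of $\vect g_h$, producing $\vect v_h \in \vect X_{h,\vect g_h}$. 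The size of this correction is governed by $(\vect w_h - \vect g_h)\cdot\vect n = (\vect w_h - \vect u)\cdot\vect n + (\vect g - \vect g_h)\cdot\vect n$ on $\partial\Omega$ (using $\vect u = \vect g$ there), which is controlled by the approximation error and by \labelcref{ass:gh}.

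I then enforce the divergence constraint. Because $\vect X_h \subset \vect Z_h$, we have $\divv{\vect v_h} \in X_{\DG_{k-1}}$, and since $\divv{\vect u} = 0$ and $|\divv{\vect\phi}| \le \sqrt d\,|\nabla\vect\phi|$ pointwise, the defect satisfies $\|\divv{\vect v_h}\|_{L^2(\Omega)} = \|\divv{\vect v_h - \vect u}\|_{L^2(\Omega)} \le \sqrt d\,\|\vect v_h - \vect u\|_{H^1(\mathcal T_h)} \to 0$. The inf-sup condition \labelcref{ass:discreteinfsup} furnishes a bounded right inverse of the divergence on $\vect X_{h,0}$: there is $\vect z_h \in \vect X_{h,0}$ with $\divv{\vect z_h} = -\divv{\vect v_h}$ and $\|\vect z_h\|_{H^1(\mathcal T_h)} \le c_b^{-1}\|\divv{\vect v_h}\|_{L^2(\Omega)} \to 0$; here the mean part is compatible because $\int_{\partial\Omega}\vect g_h\cdot\vect n \to \int_{\partial\Omega}\vect g\cdot\vect n = 0$, and I would require $\vect g_h$ to be flux-compatible so that $\vect U_{h,\vect g_h}$ is nonempty. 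Setting $\vect{\tilde u}_h = \vect v_h + \vect z_h$ gives $\divv{\vect{\tilde u}_h} = 0$ a.e.\ and, since $\vect z_h\cdot\vect n = 0$ on $\partial\Omega$, $\vect{\tilde u}_h \in \vect X_{h,\vect g_h}$; by \cref{lem:pointwisedivfree}, $\vect{\tilde u}_h \in \vect U_{h,\vect g_h}$. The triangle inequality then yields $\|\vect u - \vect{\tilde u}_h\|_{H^1(\mathcal T_h)} \to 0$.

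It remains to control the facet contributions in the broken $H^1_{\vect g}$-norm, and this I expect to be the \emph{main obstacle}. On $\partial\Omega$, since $\vect u = \vect g$, these reduce to $\sum_{F \in \mathcal F^\partial_h} h_F^{-1}\|\vect u - \vect{\tilde u}_h\|^2_{L^2(F)}$, which does not follow from broken-$H^1$ convergence alone for merely $H^1$-regular $\vect u$, because \labelcref{ass:dense} controls only interior quantities. The plan is to bound this, via a trace inequality, by $\sum_{K \cap \partial\Omega \ne \emptyset}\big(h_K^{-2}\|\vect u - \vect w_h\|^2_{L^2(K)} + \|\nabla(\vect u - \vect w_h)\|^2_{L^2(K)}\big)$ together with the $\vect g_h$-mismatch and the boundary trace of $\vect z_h$. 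Using the local estimate, the first sum is $\le C\|\vect u\|^2_{H^1(\Omega^\partial_h)}$ over an $O(h)$ boundary strip $\Omega^\partial_h$, which tends to zero by absolute continuity of the Lebesgue integral as the strip collapses onto $\partial\Omega$; the $\vect g_h$-mismatch is handled by \labelcref{ass:gh}. The genuinely delicate point is the boundary trace of the divergence-restoring correction $\vect z_h$: this is controlled only if the Fortin estimate is established in the full broken energy norm including boundary facet penalties, rather than in $\|\cdot\|_{H^1(\mathcal T_h)}$ alone. Securing this boundary-penalized bound on $\vect z_h$, in combination with the shrinking-strip argument, is where the real work lies.
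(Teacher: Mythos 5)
Your construction takes a genuinely different, and much more laborious, route than the paper. The paper's proof is essentially a one-step reduction: it \emph{defines} $(\vect{\tilde{u}}_h, \tilde{p}_h) \in \vect{U}_{h,\vect{g}_h}\times M_h$ as the solution of the discrete interior-penalty Stokes--Brinkman system with the frozen coefficient $\alpha(\rho)$, namely $a_h(\vect{\tilde{u}}_{h},\vect{v}_{h}; \rho) + b(\vect{v}_{h}, \tilde{p}_{h}) = l_h(\vect{v}_{h})$ for all $\vect{v}_{h} \in \vect{X}_{h,0}$ and $b(\vect{\tilde{u}}_{h},q_{h}) =0$ for all $q_{h} \in M_{h}$, and then invokes the known a priori theory for $\Hdiv$-conforming interior-penalty discretizations of the Stokes and Stokes--Brinkman equations to conclude existence, uniqueness, and convergence in the $H^1_{\vect{g}}(\mathcal{T}_h)$- and $L^2$-norms simultaneously. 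This sidesteps every boundary-layer and Fortin issue you wrestle with, at the cost of outsourcing the analysis to the cited references; your approach would, if completed, make the lemma self-contained.

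However, your proof has a genuinely unclosed gap, and it is exactly where you locate it: the boundary-facet penalty on the divergence-restoring correction $\vect{z}_h$. The inf-sup condition \labelcref{ass:discreteinfsup} is posed in $\|\cdot\|_{H^1(\mathcal{T}_h)}$, which by \cref{def:H1brokennorm} contains no boundary-facet terms, so it yields only $\|\vect{z}_h\|_{H^1(\mathcal{T}_h)}\le c_b^{-1}\|\divv{\vect{v}_h}\|_{L^2(\Omega)}\to 0$. The discrete trace inequality for polynomials then gives $h_F^{-1}\|\vect{z}_h\|^2_{L^2(F)}\lesssim h_F^{-2}\|\vect{z}_h\|^2_{L^2(K)}$, and since $\vect{u}$ is only assumed to lie in $H^1$ the rate at which $\|\divv{\vect{v}_h}\|_{L^2(\Omega)}$ vanishes is not quantified, so the factor $h^{-2}$ cannot be absorbed: the tangential trace of $\vect{z}_h$ on $\partial\Omega$ (its normal trace vanishes, but $\lsb \vect{z}_h\rsb_F$ on boundary facets does not) is uncontrolled in the $H^1_{\vect{g}}(\mathcal{T}_h)$-norm. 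To close this you would need either a locally supported, boundary-penalty-stable right inverse of the divergence, or---cleaner---to replace your three-stage construction by the canonical BDM/RT interpolant, which commutes with the divergence and renders the correction $\vect{z}_h$ unnecessary. Until one of these is supplied the argument is incomplete. The remaining ingredients are sound for the statement as written: the pressure follows from density alone since the lemma imposes no coupling between $\vect{\tilde{u}}_h$ and $\tilde{p}_h$, the shrinking-strip/absolute-continuity argument handles the boundary terms of $\vect{u}-\vect{w}_h$, and \labelcref{ass:gh} handles the $\vect{g}$--$\vect{g}_h$ mismatch; but note that the local quasi-interpolation estimate $\|\vect{u} - \vect{w}_h\|_{L^2(K)}\le Ch_K\|\vect{u}\|_{H^1(\omega_K)}$ is an assumption beyond what \labelcref{ass:dense} grants, even though it is available for the BDM and RT spaces of interest.
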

\begin{proof}
For sufficiently large $\sigma > 0$ and fixed $\rho \in C_\gamma$, consider the problem, find $(\vect{\tilde{u}}_h, \tilde{p}_h) \in \vect{U}_{\vect{g}_h,h} \times M_h$ that satisfies
\begin{align}
a_h(\vect{\tilde{u}}_{h},\vect{v}_{h}; \rho) + b(\vect{v}_{h}, \tilde{p}_{h}) &= l_h(\vect{v}_{h}) && \text{for all} \; \vect{v}_{h} \in \vect{X}_{h,0},\\
b(\vect{\tilde{u}}_{h},q_{h}) &=0 && \text{for all} \; q_{h} \in M_{h}.
\end{align}
Then, under assumptions \labelcref{ass:gh}--\labelcref{ass:dense}, by standard results for $\Hdiv$ finite element discretizations of the Stokes and Stokes--Brinkman equations with an interior penalty \cite{Cockburn2002, Konno2011, Konno2012}, the pair $(\vect{\tilde{u}}_h, \tilde{p}_h)$ exists, is unique, and $\|\vect{u}-\vect{\tilde{u}}_h\|_{H^1_{\vect{g}}(\mathcal{T}_h)} \to 0$, $\|p - \tilde{p}_h\|_{L^2(\Omega)}\to 0$ as $h\to0$. 
\end{proof}

\section{Convergence of a DG finite element method}
In their original paper Borrvall and Petersson \cite[Sec.~3.3]{Borrvall2003} considered a piecewise constant finite element approximation of the material distribution coupled with an inf-sup stable quadrilateral finite element approximation of the velocity and the pressure. In particular their velocity finite element space was $H^1$-conforming. They showed that if the domain $\Omega$ is a polygonal domain in two dimensions or a polyhedral Lipschitz domain in three dimensions, such approximations of the velocity and material distribution $(\vect{u}_h, \rho_h)$ that minimize $J(\vect{u}_h, \rho_h)$ converge to an unspecified solution $(\vect{u}, \rho)$ of \cref{borrvallmin} in the following sense \cite[Th.~3.2]{Borrvall2003}:
\begin{align*}
\vect{u}_h &\weak \vect{u} \;\; \text{weakly in} \;\; H^1(\Omega)^d, \\
\rho_h &\weakstar \rho \;\;  \text{weakly-* in} \;\; L^\infty(\Omega),\\
\rho_h &\to \rho \;\;  \text{strongly in} \;\; L^s(\Omega_b), \;\; s \in [1,\infty),
\end{align*}
where $\Omega_b$ is any measurable subset of $\Omega$ where $\rho$ is equal to zero or one a.e. Their analysis suggests that a finite element method is a suitable discretization, but it left a number of open problems:
\begin{enumerate}[label=({P}\arabic*)]
\item It is not clear which minimizer the sequence is converging to as the nonconvexity of the problem provides multiple candidates for the limits; \label{open:nonconvexity}
\item The convergence is weak-* in the material distribution in regions where $\{0 < \rho < 1\} \subset \Omega$ which permits the presence of checkerboard patterns as $h \to 0$;
\item There are no convergence results for the finite element approximation of the pressure, $p$. \label{open:pressure}
\end{enumerate}
In general \labelcref{open:nonconvexity} means that their result does not imply that there exists a sequence of finite element solutions that converges to the global minimizer.

Recently Borrvall and Petersson's result was extended and refined by Papadopoulos and S\"uli \cite[Th.~4]{Papadopoulos2021b}. They considered conforming discretizations of the material distribution and conforming inf-sup stable finite element discretizations for the velocity-pressure pair. Once again, the velocity finite element space $\vect{X}_{h, \vect{g}_h}$ was assumed to be $H^1$-conforming. They showed that, for any isolated minimizer $(\vect{u},\rho)$ of \cref{borrvallmin}, there exists a sequence of solutions $(\vect{u}_h, \rho_h, p_h)  \in \vect{X}_{h, \vect{g}_h} \times C_{\gamma,h} \times M_h$ satisfying the discretized first-order optimality condititions, for all $(\vect{v}_h, \eta_h, q_h) \in \vect{X}_{h, 0} \times C_{\gamma,h} \times M_h$:
\begin{align}
a(\vect{u}_h, \vect{v}_h;\rho_h) +  b(\vect{v}_h,p_h)&=l(\vect{v}_h),\\
b(\vect{u}_h,q_h) &=0,\\
c(\rho_h, \eta_h - \rho_h; \vect{u}_h) &\geq 0, 
\end{align}
such that
\begin{align*}
\vect{u}_h &\to \vect{u} \;\; \text{strongly in} \;\; H^1(\Omega)^d, \\
\rho_h &\to\rho \;\;  \text{strongly in} \;\; L^s(\Omega), \; s \in [1,\infty),\\
p_h & \to p \;\; \text{strongly in} \;\; L^2(\Omega).
\end{align*}
Their analysis resolved the open issues \labelcref{open:nonconvexity}--\labelcref{open:pressure}. The assumption that $\vect{X}_{h, \vect{g}_h} \subset H^1(\Omega)^d$ was crucial for the compactness results utilized in their proof. 

Our goal in this section is to prove a similar result for when $\vect{X}_{h, \vect{g}_h} \not\subset H^1(\Omega)^d$ but $\vect{X}_{h, \vect{g}_h} \subset \vect{Z}_h \subset\Hdiv$. The following theorem is the main result of this work. 

\begin{theorem}[Convergence of the DG finite element method]
\label{th:FEMexistence}
Let $\Omega \subset \mathbb{R}^d$ be a polygonal domain in two dimensions or a polyhedral  Lipschitz domain in three dimensions. Suppose that the inverse permeability $\alpha$ satisfies \labelcref{alpha1}--\labelcref{alpha4} and there exists an isolated minimizer $(\vect{u},\rho) \in \Hu \times C_\gamma$ of \cref{borrvallmin} that has the additional regularity $\vect{u} \in H^r(\Omega)^d$ for some $r>3/2$. Moreover, assume that, for $\theta > 0$, $U_\theta$ is the subset of $\Omega$ where $|\vect{u}|^2 \geq \theta$ a.e.~in $U_\theta$ and suppose that there exists a $\theta'>0$ such that $U_\theta$ is closed and has non-empty interior for all $\theta\leq\theta'$. Let $p$ denote the unique Lagrange multiplier associated with $(\vect{u},\rho)$ such that $(\vect{u},\rho,p)$ satisfy the first-order optimality conditions \cref{FOC1}--\cref{FOC3}. 
	 
Consider the finite element spaces $\vect{X}_h \subset \vect{Z}_h$,  $C_{\gamma,h} \subset C_\gamma$,  and $M_h \subset L^2_0(\Omega)$ and suppose that the assumptions \labelcref{ass:gh}--\labelcref{ass:dense} hold. 
	
Then, there exists an $\bar h >0$ such that, for $h\leq\bar h$, $h \to 0 $, there is a  family  of solutions $(\vect{u}_{h}, \rho_{h}, p_{h}) \in \vect{X}_{h,\vect{g}_h} \times C_{\gamma,{h}} \times M_{h}$  to the following discretized first-order optimality conditions
\begin{align}
a_h(\vect{u}_{h},\vect{v}_{h}; \rho_h) + b(\vect{v}_{h}, p_{h}) &= l_h(\vect{v}_{h}; \vect{g}_h) && \text{for all} \; \vect{v}_{h} \in \vect{X}_{h,0}, \tag{FOC1-h} \label{FOC1h}\\
b(\vect{u}_{h},q_{h}) &=0 && \text{for all} \; q_{h} \in M_{h}, \tag{FOC2-h} \label{FOC2h}\\
c(\rho_{h}, \eta_{h} - \rho_{h}; \vect{u}_h) &\geq 0 && \text{for all} \; \eta_{h} \in C_{\gamma,{h}}, \tag{FOC3-h} \label{FOC3h}
\end{align}
such that, $\|\vect{u}-\vect{u}_{h}\|_{H^1_{\vect{g}}(\mathcal{T}_h)} \to 0$, $\rho_{h} \to \rho$ strongly in $L^s(\Omega)$, $s \in [1,\infty)$,  and $p_{h} \to p$ strongly in $L^2(\Omega)$ as $h \to 0$.
\end{theorem}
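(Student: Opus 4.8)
The plan is to follow the direct-method-plus-regularization strategy of Papadopoulos and S\"uli, adapted to the nonconforming setting. Since the isolated minimizer $(\vect{u},\rho)$ need not be the only minimizer of \cref{borrvallmin}, I would first fix it and pass to a \emph{modified} problem having $(\vect{u},\rho)$ as its unique minimizer. Concretely, let $E$ be an open neighborhood isolating $(\vect{u},\rho)$ and define the modified functional $\tilde J(\vect v,\eta)\coloneqq J(\vect v,\eta)+\tfrac12\|\eta-\rho\|_{L^2(\Omega)}^2$ together with its discrete analogue $\tilde J_h(\vect v_h,\eta_h)\coloneqq J_h(\vect v_h,\eta_h)+\tfrac12\|\eta_h-\rho\|_{L^2(\Omega)}^2$, each minimized over the admissible set intersected with $E$ (respectively a discrete neighborhood). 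The quadratic penalty is strictly convex in $\eta$, has vanishing first variation at $\eta=\rho$, and hence strictly convexifies the functional near the fixed minimizer without altering the optimality conditions at $(\vect u,\rho)$; combined with the isolation of $(\vect u,\rho)$ in $E$, this makes the modified infinite-dimensional problem admit $(\vect u,\rho)$ as its unique minimizer. The remaining work is: (i) existence of discrete minimizers $(\vect u_h,\rho_h)$; (ii) strong convergence $(\vect u_h,\rho_h)\to(\vect u,\rho)$ and $p_h\to p$; and (iii) showing that for small $h$ these discrete minimizers in fact solve \cref{FOC1h}--\cref{FOC3h}.

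Existence in step (i) follows from coercivity and the finite-dimensionality of the discrete problem: \cref{prop:coercivity} yields, for $\sigma\ge\sigma_0$, a uniform bound on $\|\vect u_h\|_{H^1(\mathcal T_h)}$ along any minimizing sequence, while $C_{\gamma,h}\subset C_\gamma$ is bounded in $L^\infty(\Omega)$ and weak-* closed; lower semicontinuity of $\tilde J_h$ over the finite-dimensional admissible set then gives a minimizer. Using the recovery sequence of \cref{lemma:strongv} together with the $L^2$-density \labelcref{ass:dense} of $C_{\gamma,h}$ and the consistency \cref{prop:consistencyJh}, I obtain a competitor and hence the uniform energy bound $\tilde J_h(\vect u_h,\rho_h)\le \tilde J(\vect u,\rho)+o(1)$, which feeds the compactness argument.

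Step (ii) is the crux. From the uniform broken-$H^1$ bound I would extract, using the discrete compactness theorem of Buffa and Ortner valid under the mesh conditions \labelcref{ass:M1}--\labelcref{ass:M3} and the submesh assumption, a subsequence with $\vect u_h\to\vect u^*$ strongly in $L^2(\Omega)^d$ and with vanishing jumps in the limit, so that $\vect u^*\in H^1(\Omega)^d$; the boundary jump term in the norm \cref{def:H1gbrokennorm} together with \labelcref{ass:gh} forces the trace $\vect u^*=\vect g$ on $\partial\Omega$, and \cref{lem:pointwisedivfree} (pointwise $\divv{\vect u_h}=0$) plus normal-trace continuity in $\Hdiv$ passes to the limit to give $\vect u^*\in\Hu$. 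Simultaneously $\rho_h\weakstar\rho^*$ in $L^\infty(\Omega)$ for some $\rho^*\in C_\gamma$. The weak$\times$weak-* lower semicontinuity of $J$ from \cref{th:BPexistence}, applied to the continuous part, together with the nonnegativity of the interior-penalty jump terms and of the penalty, yields $\liminf_h\tilde J_h(\vect u_h,\rho_h)\ge\tilde J(\vect u^*,\rho^*)$; combined with the competitor bound this shows $(\vect u^*,\rho^*)$ minimizes $\tilde J$, so by uniqueness $(\vect u^*,\rho^*)=(\vect u,\rho)$ and $\tilde J_h(\vect u_h,\rho_h)\to\tilde J(\vect u,\rho)$. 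Energy convergence then upgrades the convergences to strong ones: convergence of the quadratic Dirichlet-plus-jump energy with weak broken-$H^1$ convergence gives, by a Radon--Riesz argument using the coercivity of \cref{prop:coercivity}, strong convergence in $\|\cdot\|_{H^1_{\vect g}(\mathcal T_h)}$, while convergence of the penalty $\tfrac12\|\rho_h-\rho\|_{L^2(\Omega)}^2\to0$ gives strong $L^2$ convergence of $\rho_h$ and hence, by $L^\infty$-boundedness and interpolation, strong $L^s$ convergence. Finally, subtracting \cref{FOC1h} from the consistency identity of \cref{prop:consistencyah}, estimating the residual via the established strong convergences and $\alpha(\rho_h)\to\alpha(\rho)$, and invoking the discrete inf-sup condition \labelcref{ass:discreteinfsup} gives $p_h\to p$ strongly in $L^2(\Omega)$. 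A standard whole-sequence argument removes the passage to subsequences, since the limit is always $(\vect u,\rho,p)$.

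For step (iii) I would use that, by the strong convergence just proved, $(\vect u_h,\rho_h)$ eventually lies in the interior of the discrete neighborhood, so the constraint defining the modified problem is inactive, and that the penalty's first variation vanishes at $\rho_h$ up to $O(\|\rho_h-\rho\|_{L^2(\Omega)})\to0$; hence for $h\le\bar h$ the modified optimality conditions reduce exactly to \cref{FOC1h}--\cref{FOC3h}. The main obstacle throughout is the discrete compactness in the nonconforming setting: recovering, from a merely broken-$H^1$-bounded, pointwise divergence-free DG sequence, a subsequence converging strongly in $L^2$ to a limit genuinely in $\Hu$---that is, regaining $H^1$-conformity, the correct trace on $\partial\Omega$, and the pointwise divergence constraint in the limit despite $\vect X_{h,\vect g_h}\not\subset H^1(\Omega)^d$. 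This is precisely where the mesh regularity \labelcref{ass:M1}--\labelcref{ass:M3}, the Buffa--Ortner submesh condition, and the jump control built into \cref{def:H1gbrokennorm} are indispensable; the extra regularity $\vect u\in H^r(\Omega)^d$ and the level-set hypothesis on $U_\theta$ then enter to make the consistency \cref{prop:consistencyah} available and to secure the strong convergence of the material distribution and the recovery of \cref{FOC3}.
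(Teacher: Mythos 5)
Your overall architecture---fix the isolated minimizer, localize the discrete problem near it, extract limits via the Buffa--Ortner compactness machinery, identify the limit by unique minimality, upgrade to strong convergence, then recover the discrete optimality conditions---matches the paper's. The decisive difference is your quadratic penalty $\tfrac12\|\eta_h-\rho\|_{L^2(\Omega)}^2$, and it opens a genuine gap at step (iii). A minimizer $(\vect{u}_h,\rho_h)$ of your $\tilde J_h$ over the localized discrete set satisfies, upon differentiating in the direction $\eta_h-\rho_h$,
\begin{align*}
c(\rho_h,\eta_h-\rho_h;\vect{u}_h) + \int_\Omega(\rho_h-\rho)(\eta_h-\rho_h)\,\dx \;\geq\; 0,
\end{align*}
and the extra term is a sign-indefinite perturbation that is $O(\|\rho_h-\rho\|_{L^2(\Omega)})$: it tends to zero as $h\to0$ but does not vanish for any fixed $h$. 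Your iterates therefore solve a perturbed variational inequality, not \cref{FOC3h}, and the claim that the modified conditions ``reduce exactly'' to \cref{FOC3h} for $h\le\bar h$ is unjustified---the theorem asserts exact solvability of \cref{FOC1h}--\cref{FOC3h}. The paper's ``modified problem'' \cref{BPh} adds nothing to $J_h$; it only restricts the admissible set to $(\vect{U}_{h,\vect{g}_h}\cap B_{r/2,\Hdiv}(\vect{u}))\times(C_{\gamma,h}\cap B_{r/2,L^2(\Omega)}(\rho))$, so that once the minimizers are shown to lie strictly inside these balls (\cref{prop:FEMconvergence3}) all admissible variations are available and the exact conditions follow.

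The price the paper pays---and precisely what your penalty was implicitly buying---is the strong $L^2$ convergence of $\rho_h$. With the penalty it would drop out of energy convergence; without it the paper needs the separate argument of \cref{prop:FEMconvergence1.5}, which uses the strong convexity bound $\alpha''\ge\alpha''_{\min}>0$ from \labelcref{alpha2}, the level sets $U_\theta$ hypothesized in the theorem, the support result $\mathrm{supp}(\rho)\subseteq\mathrm{supp}(\vect{u})$ of \cref{prop:rhosupport}, and Petersson's corollary \cref{prop:Omegab} on $\{\rho=0\}\cup\{\rho=1\}$. That your proposal never invokes $U_\theta$ or \labelcref{alpha2} for this purpose is a sign the route cannot close as written. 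Two secondary points: the liminf inequality for $J_h$ does not follow from ``nonnegativity of the interior-penalty jump terms,'' since the cross terms $-\nu\sum_{F}\int_F\lcb\nabla\vect{u}_h\rcb_F:\lsb\vect{u}_h\rsb_F\,\mathrm{d}s$ are not sign-definite; the paper instead splits $J_h=J_{1,h}+J_{2,h}$ and invokes the Buffa--Ortner convergence result for the broken Dirichlet energy (with $\sigma\ge\sigma_0$). And your Radon--Riesz upgrade to strong broken-$H^1$ convergence is replaced in the paper by a C\'ea-type estimate (\cref{velerror6}) built from the discrete variational inequality, the consistency identity \cref{prop:consistencyah}, and the coercivity of $a_h$, which avoids having to pass to the limit in the $h$-dependent norms.
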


\begin{remark}
Convergence of $\vect{u}_{h}$ in the norm $\|\cdot \|_{H^1_{\vect{g}}(\mathcal{T}_h)}$ ensures that the Dirichlet boundary condition is correctly satisfied in the limit. This cannot be  immediately deduced if $\vect{u}_{h}$ only strongly converges in the norm  $\|\cdot \|_{H^1(\mathcal{T}_h)}$.
\end{remark}

We first introduce some auxiliary propositions to facilitate the proof of \cref{th:FEMexistence}.   In all the following propositions and corollaries, we assume that the conditions in \cref{th:FEMexistence} hold and we fix an isolated minimizer $(\vect{u},\rho) \in \Hu \times C_\gamma$ of \cref{borrvallmin}.  We also fix an $r>0$ such that $(\vect{u},\rho)$ is the unique local minimizer of \cref{borrvallmin} in $B_{r, \Hdiv \times L^2(\Omega)}(\vect{u},\rho) \cap (\Hgdiv \times C_{\gamma})$, where
\begin{align}
\begin{split}
&B_{r, \Hdiv \times L^2(\Omega)}(\vect{u},\rho) \\
&\coloneqq \{\vect{v} \in \Hdiv, \; \eta \in C_\gamma: \|\vect{u} - \vect{v}\|_{\Hdiv} + \|\rho - \eta\|_{L^2(\Omega)}\leq r \}.
\end{split}
\end{align}
Such an $r$ is guaranteed to exist by the definition of an isolated minimizer and the extension of $J$ in \cref{Hdiv-J} to functions $\vect{v} \in \Hgdiv$ such that $\vect{v} \not\in H^1_{\vect{g}}(\Omega)^d$. We also define $B_{r,\Hdiv}(\vect{u})$ and $B_{r,L^2(\Omega)}(\rho)$ by
\begin{align}
B_{r, \Hdiv}(\vect{u}) &\coloneqq \{\vect{v} \in \Hdiv: \|\vect{u} - \vect{v}\|_{\Hdiv} \leq r \},\\
B_{r,L^2(\Omega)}(\rho)& \coloneqq \{ \eta \in C_\gamma:\|\rho - \eta\|_{L^2(\Omega)} \leq r \}.
\end{align}
We note that 
\begin{align*}
&(\Hgdiv \cap B_{r/2,\Hdiv}(\vect{u})) \times (C_{\gamma}\cap B_{r/2,L^2(\Omega)}(\rho)) \\
&\indent \subset B_{r, \Hdiv \times L^2(\Omega)}(\vect{u},\rho) \cap (\Hgdiv \times C_{\gamma} )
\end{align*}
and hence  $(\vect{u},\rho)$ is also the unique minimizer in $(\Hgdiv \cap B_{r/2,\Hdiv}(\vect{u})) \times (C_{\gamma}  \cap B_{r/2,L^2(\Omega)}(\rho))$. 
\begin{remark}
The extension of $J$ in \cref{Hdiv-J} to functions $\vect{v} \in \Hgdiv$ such that $\vect{v} \not\in H^1_{\vect{g}}(\Omega)^d$ means that functions $\vect{v} \not\in H^1_{\vect{g}}(\Omega)^d$ cannot be minimizers. 
\end{remark} 
\begin{remark}
We make the assumption that $\rho \in C_\gamma$ is isolated with respect to the $L^2$-norm (as opposed to the $L^\infty$-norm). This is a stronger isolation assumption as discussed in \cite[Rem.~7]{Papadopoulos2021b}. However, it is equivalent to being isolated with respect to any $L^s$-norm for $s \in [1,\infty)$ as discussed in \cite[Rem.~8]{Papadopoulos2021b}. As far as we are aware, the $L^2$-isolation assumption is valid for all practical problems found in the literature, in particular it holds for the example found in \cref{sec:numerical}.
\end{remark}
\begin{remark}
We are required to make the assumption that $\vect{u}$ is isolated with respect to the $\Hdiv$-norm (as opposed to the $H^1$-norm), i.e.~for all $\vect{v} \in \Hgdiv$ such that $\vect{v} \in B_{r/2, \Hdiv}(\vect{u})$, then $\vect{v}$ cannot be part of a minimizing pair. This is necessary to construct the discretized problem \cref{BPh} below. This is a stronger isolation assumption than isolation with respect to the $H^1$-norm. However, in examples found in the literature and, in particular, the example we consider in \cref{sec:numerical}, the velocity minimizers are always isolated with respect to the $\Hdiv$-norm. 
\end{remark}

\begin{proposition}[Weak convergence of ($\vect{u}_h, \rho_h)$ in $\Hdiv \times L^2(\Omega)$]
\label{prop:FEMconvergence}
For a given $h > 0$, consider the finite-dimensional optimization problem: find $(\vect{u}_h,\rho_h) \in (\vect{U}_{h,\vect{g}_h} \cap B_{r/2,\Hdiv}(\vect{u})) \times (C_{\gamma, h} \cap B_{r/2,L^2(\Omega)}(\rho))$ that minimizes
\begin{align}
J_h(\vect{u}_h, \rho_h). \tag{BP-h} \label{BPh}
\end{align}
Then, a global minimizer $(\vect{u}_h,\rho_h)$ of \cref{BPh} exists and there exist subsequences (up to relabeling) such that as $h \to 0$:
\begin{align}
\vect{u}_h  &\weak \vect{u} \; \text{weakly in} \; \Hdiv,\\
\vect{u}_h &\to \vect{u} \;\; \text{strongly in} \;\; L^q(\Omega)^d,\\
\vect{u}_h &\to \vect{u} \;\; \text{strongly in} \;\; L^r(\partial \Omega)^d,\\
\rho_h  &\weakstar \rho \; \text{weakly-* in} \; L^\infty(\Omega),\\
\rho_h  &\weak \rho \; \text{weakly in} \; L^s(\Omega),\; s \in [1,\infty),
\end{align}
where $1 \leq q, r < \infty$ in two dimensions and $1 \leq q < 6$, $1 \leq r <4$ in three dimensions.
\end{proposition}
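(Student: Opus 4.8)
The plan is to run the direct method in the finite-dimensional feasible set to get existence, read off uniform a priori bounds from the ball constraints and the energy, extract weakly(-*) convergent subsequences, and then identify the weak limit as the fixed minimizer $(\vect{u},\rho)$ through a $\Gamma$-convergence–type pairing of an upper bound (from optimality against a recovery sequence) with a discrete lower-semicontinuity inequality, closing the argument with the uniqueness of $(\vect{u},\rho)$ in the ball. First I would verify that the feasible set $(\vect{U}_{h,\vect{g}_h}\cap B_{r/2,\Hdiv}(\vect{u}))\times(C_{\gamma,h}\cap B_{r/2,L^2(\Omega)}(\rho))$ is nonempty for all sufficiently small $h$, which fixes the threshold $\bar h$: the recovery sequence $\vect{\tilde{u}}_h$ of \cref{lemma:strongv} lies in $\vect{U}_{h,\vect{g}_h}$ and is pointwise divergence-free by \cref{lem:pointwisedivfree}, so since $\vect{u}$ is divergence-free, $\|\vect{u}-\vect{\tilde{u}}_h\|_{\Hdiv}=\|\vect{u}-\vect{\tilde{u}}_h\|_{L^2(\Omega)}\leq\|\vect{u}-\vect{\tilde{u}}_h\|_{H^1_{\vect{g}}(\mathcal{T}_h)}\to 0$, giving $\vect{\tilde{u}}_h\in B_{r/2,\Hdiv}(\vect{u})$ eventually, while density \labelcref{ass:dense} supplies $\tilde\rho_h\in C_{\gamma,h}$ with $\tilde\rho_h\to\rho$ in $L^2(\Omega)$. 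The feasible set is closed and bounded in the finite-dimensional space $\vect{X}_h\times C_{\gamma,h}$, hence compact, and $J_h$ is continuous on it ($\alpha$ is continuous by \labelcref{alpha4} and every other term is polynomial in the coefficients), so a global minimizer $(\vect{u}_h,\rho_h)$ exists.

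For the a priori bounds, the constraint $\vect{u}_h\in B_{r/2,\Hdiv}(\vect{u})$ immediately gives $\|\vect{u}_h\|_{\Hdiv}\leq\|\vect{u}\|_{\Hdiv}+r/2$, and $\rho_h\in C_\gamma$ gives $0\leq\rho_h\leq 1$, so $\{\rho_h\}$ is bounded in $L^\infty(\Omega)$. To bound the broken norm $\|\vect{u}_h\|_{H^1(\mathcal{T}_h)}$ uniformly, I would exploit optimality $J_h(\vect{u}_h,\rho_h)\leq J_h(\vect{\tilde{u}}_h,\tilde\rho_h)$ after rewriting $J_h(\vect{v}_h,\eta_h)=\tfrac12 a_h(\vect{v}_h,\vect{v}_h;\eta_h)-l_h(\vect{v}_h;\vect{g}_h)+C_{\vect{g}_h}$, where $C_{\vect{g}_h}$ is the pure boundary term depending only on $\vect{g}_h$ and cancels on both sides of the comparison. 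Coercivity of $a_h$ (\cref{prop:coercivity}), continuity of $l_h$, boundedness of the right-hand side (because $\vect{\tilde{u}}_h\to\vect{u}$ in the broken norm keeps $a_h(\vect{\tilde{u}}_h,\vect{\tilde{u}}_h;\tilde\rho_h)$ and $l_h(\vect{\tilde{u}}_h;\vect{g}_h)$ bounded), and Young's inequality then yield $\sup_h\|\vect{u}_h\|_{H^1(\mathcal{T}_h)}<\infty$.

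With these bounds I would pass to subsequences so that $\vect{u}_h\weak\vect{u}^*$ in $\Hdiv$ and $\rho_h\weakstar\rho^*$ in $L^\infty(\Omega)$ (hence $\rho_h\weak\rho^*$ in every $L^s(\Omega)$, $s\in[1,\infty)$). The uniform broken-$H^1$ bound is exactly the hypothesis for the discrete Rellich (compact embedding) theorem for broken Sobolev spaces of Buffa and Ortner \cite{Buffa2009}, which delivers the strong convergences $\vect{u}_h\to\vect{u}^*$ in $L^q(\Omega)^d$ and on $L^r(\partial\Omega)^d$ in the stated ranges. The limit is feasible: the divergence is weakly continuous on $\Hdiv$ so $\divv{\vect{u}^*}=0$; the normal trace is weakly continuous and $\vect{u}_h\cdot\vect{n}=\vect{g}_h\cdot\vect{n}\to\vect{g}\cdot\vect{n}$ by \labelcref{ass:gh}, so $\vect{u}^*\cdot\vect{n}=\vect{g}\cdot\vect{n}$, whence $\vect{u}^*\in\Hgdiv$; and $\vect{u}^*\in B_{r/2,\Hdiv}(\vect{u})$, $\rho^*\in C_\gamma\cap B_{r/2,L^2(\Omega)}(\rho)$ by weak(-*) closedness of these convex sets. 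Optimality against the recovery sequence together with consistency of $J_h$ on smooth fields (\cref{prop:consistencyJh}) gives the upper bound $\limsup_h J_h(\vect{u}_h,\rho_h)\leq\lim_h J_h(\vect{\tilde{u}}_h,\tilde\rho_h)=J(\vect{u},\rho)$.

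The decisive and hardest step is the matching lower bound $J(\vect{u}^*,\rho^*)\leq\liminf_h J_h(\vect{u}_h,\rho_h)$. For the zeroth-order part I would combine the strong convergence $|\vect{u}_h|^2\to|\vect{u}^*|^2$ in $L^1(\Omega)$ with the $L^\infty$-bound on $\alpha(\rho_h)$ and the convexity of $\alpha$ (\labelcref{alpha2}) to obtain $\int_\Omega\alpha(\rho^*)|\vect{u}^*|^2\,\dx\leq\liminf_h\int_\Omega\alpha(\rho_h)|\vect{u}_h|^2\,\dx$ (a special case of the lower semicontinuity in \cref{th:BPexistence}), while $\int_\Omega\vect{f}\cdot\vect{u}_h\,\dx\to\int_\Omega\vect{f}\cdot\vect{u}^*\,\dx$. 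I expect the genuine obstacle to be the lower semicontinuity of the broken viscous energy together with the interior and boundary penalty terms: the Buffa--Ortner framework must force the weak limit of the broken gradients, with jumps controlled by the penalty, to be the true gradient of a function in $H^1_{\vect{g}}(\Omega)^d$, yielding $\tfrac{\nu}{2}\int_\Omega|\nabla\vect{u}^*|^2\,\dx\leq\liminf_h(\cdots)$; in particular a finite liminf forces $\vect{u}^*\in H^1_{\vect{g}}(\Omega)^d$, ruling out the value $+\infty$ assigned by \cref{Hdiv-J}. Chaining the two bounds gives $J(\vect{u}^*,\rho^*)\leq J(\vect{u},\rho)$ with $(\vect{u}^*,\rho^*)$ feasible in the ball on which $(\vect{u},\rho)$ is the \emph{unique} minimizer, so $(\vect{u}^*,\rho^*)=(\vect{u},\rho)$; since every subsequence admits a further subsequence converging to this same limit, the full family converges, which completes the proof.
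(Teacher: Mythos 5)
Your proposal is correct and follows essentially the same route as the paper: direct method on the compact finite-dimensional feasible set, weak(-*) compactness from the ball constraints, the recovery sequence of \cref{lemma:strongv} plus optimality for the upper bound $\lim_h J_h(\vect{u}_h,\rho_h)\leq J(\vect{u},\rho)$, the Buffa--Ortner compact embedding for the strong $L^q(\Omega)^d$ and $L^r(\partial\Omega)^d$ convergences and for placing the limit in $H^1(\Omega)^d$, weak lower semicontinuity (Borrvall--Petersson style for the zeroth-order part, Buffa--Ortner for the broken viscous/penalty part), and uniqueness of $(\vect{u},\rho)$ in the ball to identify the limit. The only notable difference is that you derive the uniform broken-$H^1$ bound explicitly from coercivity of $a_h$ and the energy comparison, a step the paper simply asserts.
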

\begin{proof}
The functional $J_h$ is continuous and 
\begin{align}
(\vect{U}_{h,\vect{g}_h} \cap B_{r/2,\Hdiv}(\vect{u})) \times (C_{\gamma, h} \cap B_{r/2,L^2(\Omega)}(\rho))
\end{align}
is a finite-dimensional, closed and bounded set and,  for sufficiently small $h$, non-empty,  therefore, sequentially compact by the Heine--Borel theorem \cite[Th.~11.18]{Fitzpatrick2009}. Hence, $J_h$ attains its infimum in $(\vect{U}_{\vect{g}_h,h} \cap B_{r/2,\Hdiv}(\vect{u})) \times (C_{\gamma, h} \cap B_{r/2,L^2(\Omega)}(\rho))$ and, therefore, a global minimizer $(\vect{u}_h, \rho_h)$ exists. 

By a corollary of Kakutani's Theorem \cite[Th.~A.65]{Fonseca2006}, if a Banach space is reflexive then every norm-closed, bounded and convex subset of the Banach space is weakly compact and thus, by the Eberlein--\v{S}mulian theorem \cite[Th.~A.62]{Fonseca2006}, sequentially weakly compact. It can be checked that $\Hdiv \cap B_{r/2,\Hdiv}(\vect{u})$ and $C_\gamma \cap B_{r/2,L^2(\Omega)}(\rho)$ are norm-closed, bounded and convex subsets of the reflexive Banach spaces $\Hdiv$ and $L^2(\Omega)$, respectively. Therefore,  $\Hdiv \cap B_{r/2,\Hdiv}(\vect{u})$ is weakly sequentially compact in $\Hdiv$  and $C_\gamma \cap B_{r/2,L^2(\Omega)}(\rho)$ is weakly sequentially compact in $L^2(\Omega)$. 

Hence we extract subsequences (up to relabeling), $(\vect{u}_h)$ and $(\rho_h)$ of the sequence generated by the global minimizers of \cref{BPh} such that
\begin{align}
\vect{u}_h &\weak \hat{\vect{u}} \in \Hdiv \cap B_{r/2,\Hdiv}(\vect{u}) \; \text{weakly in} \; \Hdiv,\\
\rho_h &\weak \hat{\rho} \in C_\gamma \cap B_{r/2,L^2(\Omega)}(\rho) \; \text{weakly in} \; L^2(\Omega).
\end{align}

By assumption \labelcref{ass:dense}, there exists a sequence of finite element functions $\tilde{\rho}_h \in C_{\gamma,h}$ that strongly converges to  $\rho$ in $L^2(\Omega)$. Moreover, \cref{lemma:strongv} implies the existence of a sequence $(\vect{\tilde{u}}_h) \in \vect{U}_{\vect{g}_h,h}$ that satisfies $\| \vect{u} - \vect{\tilde{u}}_h \|_{H^1_{\vect{g}}(\mathcal{T}_h)} \to 0$.

We now wish to identify the limits $\hat{\vect{u}}$ and $\hat{\rho}$. Consider the following bound:
\begin{align}
\label{femconvergence1}
\begin{split}
&2|J_h(\vect{\tilde{u}}_h, \tilde{\rho}_h) - J(\vect{u},\rho)|\\
&\indent \leq \int_\Omega |(\alpha(\rho)-\alpha(\tilde{\rho}_h)) |\vect{u}|^2 | + | \alpha(\tilde{\rho}_h) (|\vect{u}|^2 - |\vect{\tilde{u}}_h|^2)|+ 2 |\vect{f} \cdot (\vect{u} - \vect{\tilde{u}}_h)| \; \dx\\
&\indent \indent +\nu \sum_{K\in\mathcal{T}_h} \int_K \left||\nabla \vect{u} |^2-|\nabla \vect{\tilde{u}}_h|^2 |  \right|  \; \dx\\
& \indent \indent +  \nu  \sum_{F \in \mathcal{F}^i_h} \int_F \sigma h_F^{-1}| \lsb \vect{\tilde{u}}_h \rsb_F|^2 \, \mathrm{d}s +  \nu  \sum_{F \in \mathcal{F}^\partial_h}  \int_F \sigma h_F^{-1}| \lsb \vect{\tilde{u}}_h - \vect{g}_h \rsb_F|^2 \, \mathrm{d}s \\
& \indent \indent  +2  \nu  \sum_{F \in \mathcal{F}^i_h}\int_F |\lcb \nabla \vect{\tilde{u}}_h \rcb_F : \lsb \vect{\tilde{u}}_h \rsb_F |\, \mathrm{d}s\\
& \indent \indent + 2  \nu  \sum_{F \in \mathcal{F}^\partial_h}  \int_F |\lcb \nabla \vect{\tilde{u}}_h \rcb_F : \lsb \vect{\tilde{u}}_h - \vect{g}_h \rsb_F | \,\mathrm{d}s. 
\end{split}
\end{align}
For all $\vect{v} \in H^1(\mathcal{T}_h)^d$, $\tens{\Phi} \in (X_{\DG_k})^{d \times d}$, $h > 0$, we have that \cite[Lem.~7]{Buffa2009}
\begin{align}
\label{ineq:buffa}
\begin{split}
& \sum_{F \in \mathcal{F}_h} \int_F  |\lcb \tens{\Phi} \rcb_F : \lsb \vect{v} \rsb_F|\,\mathrm{d}s\\
&\leq C\left( \sum_{F \in \mathcal{F}_h} \int_F h_F^{-1} | \lsb \vect{v} \rsb_F|^2 \, \mathrm{d}s\right)^{1/2}\left(\sum_{K\in\mathcal{T}_h}  \| \tens{\Phi}\|^2_{L^2(K)}\right)^{1/2},
\end{split}
\end{align}
for a constant $C$ that only depends on the mesh quality. Hence, we see that
\begin{align}
\label{femconvergence1.5}
\begin{split}
&2|J_h(\vect{\tilde{u}}_h, \tilde{\rho}_h) - J(\vect{u},\rho)|\\
&\indent \leq L_\alpha  \|\vect{u}\|^2_{L^4(\Omega)}\| \tilde{\rho}_h - \rho\|_{L^2(\Omega)} + 2 \|\vect{f}\|_{L^2(\Omega)}\|\vect{\tilde{u}}_h - \vect{u}\|_{L^2(\Omega)} \\
&\indent \indent+ \bar{\alpha} \|\vect{\tilde{u}}_h -\vect{u}\|_{L^2(\Omega)}( \|\vect{\tilde{u}}_h - \vect{u}\|_{L^2(\Omega)} + 2\|\vect{u}\|_{L^2(\Omega)})\\
& \indent \indent +\nu \sum_{K \in \mathcal{T}_h}  \|\nabla \vect{\tilde{u}}_h - \nabla \vect{u} \|_{L^2(K)}( \|\nabla \vect{\tilde{u}}_h - \nabla \vect{u}\|_{L^2(K)} + 2\|\nabla \vect{u}\|_{L^2(K)}) \\
& \indent \indent +  \nu  \sum_{F \in \mathcal{F}^i_h} \int_F \sigma h_F^{-1}| \lsb \vect{\tilde{u}}_h \rsb_F|^2 \mathrm{d}s +  \nu  \sum_{F \in \mathcal{F}^\partial_h}  \int_F \sigma h_F^{-1}| \lsb \vect{\tilde{u}}_h - \vect{g}_h \rsb_F|^2 \mathrm{d}s\\
& \indent \indent +  C  \nu  \left( \sum_{F \in \mathcal{F}^i_h} \int_F h_F^{-1}| \lsb \vect{\tilde{u}}_h \rsb_F|^2 \mathrm{d}s\right)^{1/2} \left(\sum_{K\in\mathcal{T}_h}  \| \nabla \vect{\tilde{u}}_h \|^2_{L^2(K)}\right)^{1/2}\\
& \indent \indent + C  \nu  \left( \sum_{F \in \mathcal{F}^\partial_h} \int_F h_F^{-1}| \lsb \vect{\tilde{u}}_h -\vect{g}_h\rsb_F|^2 \mathrm{d}s\right)^{1/2} \left(\sum_{K\in\mathcal{T}_h}  \| \nabla \vect{\tilde{u}}_h \|^2_{L^2(K)}\right)^{1/2},
\end{split}
\end{align}
where $L_\alpha$ denotes the Lipschitz constant for $\alpha$. Thanks to the strong convergence of $\vect{\tilde{u}}_h$ in the broken $H^1_{\vect{g}}$-norm to $\vect{u}$ and by assumption \labelcref{ass:gh}, from \cref{femconvergence1.5} we deduce that 
\begin{align*}
J_h(\vect{\tilde{u}}_h, \tilde{\rho}_h) \to J(\vect{u},\rho) \;\; \text{as} \;\; h \to 0. 
\end{align*}
Furthermore, for sufficiently small $h >0$ we note that 
\begin{align*}
(\vect{\tilde{u}}_h, \tilde{\rho}_h) \in (\vect{U}_{\vect{g}_h,h} \cap B_{r/2,\Hdiv}(\vect{u})) \times (C_{\gamma,h} \cap B_{r/2,L^2(\Omega)}(\rho)).
\end{align*}
Therefore, since $(\vect{u}_h,\rho_h)$ is a global minimizer of \cref{BPh} in $B_{r/2,\Hdiv}(\vect{u})) \times (C_{\gamma,h} \cap B_{r/2,L^2(\Omega)}(\rho))$,
\begin{align}
J_h(\vect{u}_h,\rho_h) \leq J_h(\vect{\tilde{u}}_h, \tilde{\rho}_h).
\end{align}
By taking the limit as $h \to 0$ and utilizing the strong convergence of $\vect{\tilde{u}}_h$ and $\tilde{\rho}_h$ to $\vect{u}$ and $\rho$, respectively, we see that
\begin{align}
\lim_{h \to 0} J_h(\vect{u}_h, \rho_h) \leq J(\vect{u}, \rho). \label{eq:femapprox1}
\end{align}

By assumption \labelcref{ass:dense}, for every $q \in L^2_0(\Omega)$, there exists a sequence of $\tilde{q}_h \in M_h$ such that $\tilde{q}_h \to q$ strongly in $L^2(\Omega)$. Since $\vect{u}_h \weak \hat{\vect{u}}$ weakly in $\Hdiv$ and $\vect{u}_h \in \vect{U}_{\vect{g}_h,h}$, we see that
\begin{align}
b(\hat{\vect{u}}, q) = \lim_{h \to 0} b(\vect{u}_h, \tilde{q}_h) +  \lim_{h \to 0} b(\vect{u}_h, q - \tilde{q}_h)= 0 \;\; \text{for all} \;\; q \in L^2_0(\Omega). \label{hatudivfree}
\end{align}
Hence, $\hat{\vect{u}}$ is pointwise divergence-free. The final step to identify $\hat{\vect{u}}$ as $\vect{u}$ is to show that $\hat{\vect{u}} \in H^1_{\vect{g}}(\Omega)^d$. Now, the sequence $(\vect{u}_h)$ also defines a bounded sequence in $H^1(\mathcal{T}_h)^d$ such that
\begin{align*}
\sup_{h > 0} \left[ \|\vect{u}_h \|_{L^1(\Omega)} + |\vect{u}_h|_{H^1(\mathcal{T}_h)} \right] < + \infty.
\end{align*}
Hence, by the compact embedding lemma, as found in Buffa and Ortner \cite[Lem.~8]{Buffa2009}, there exists a subsequence (up to relabeling) and a limit $\hat{\vect{w}} \in H^1(\Omega)^d$ such that 
\begin{align}
\vect{u}_h \to \hat{\vect{w}}  \;\; \text{strongly in} \;\; L^q(\Omega)^d, \label{hatuh1}
\end{align}
where $1 \leq q < \infty$ in two dimensions and $1 \leq q < 6$ in three dimensions. By the uniqueness of limits $\hat{\vect{w}} = \hat{\vect{u}}$ a.e.~in $\Omega$ and thus $\hat{\vect{u}} \in H^1(\Omega)^d$. Moreover, the same compact embedding lemma implies that 
\begin{align}
\vect{u}_h \to \hat{\vect{u}} \;\; \text{strongly in} \;\; L^r(\partial \Omega)^d,
\end{align}
where $1 \leq r < \infty$ in two dimensions and $1 \leq r< 4$ in three dimensions. If $\|\vect{u}_h - \vect{g}\|_{L^2(\partial \Omega)} \not \to 0$, then $J_h(\vect{u}_h, \rho_h) \to +\infty$. Since $(\vect{u}_h)$ is a bounded sequence, we must have that $\|\vect{u}_h - \vect{g}\|_{L^2(\partial \Omega)} \to 0$. Hence,
\begin{align}
\| \hat{\vect{u}} - \vect{g} \|_{L^2(\partial \Omega)} &\leq \| \hat{\vect{u}} - \vect{u}_h \|_{L^2(\partial \Omega)} + \|\vect{u}_h - \vect{g} \|_{L^2(\partial \Omega)} \to 0.
 \label{hatug}
\end{align}

Thus,  \cref{hatudivfree}, \cref{hatuh1}, and \cref{hatug} imply that $\hat{\vect{u}} \in  \Hu \cap B_{r/2,\Hdiv}(\vect{u})$.  

In order to identify the weak limit $(\vect{\hat{u}}, \hat{\rho})$ with the isolated minimizer $(\vect{u},\rho)$, we require a weak lower semicontinuity result. Consider the following decomposition of the functional $J_h(\vect{u}_h,\rho_h) = J_{1,h}(\vect{u}_h,\rho_h) + J_{2,h}(\vect{u}_h)$ where
\begin{align}
J_{1,h}(\vect{u}_h,\rho_h) = \frac{1}{2}\int_\Omega \alpha(\rho_h)|\vect{u}_h|^2 - 2\vect{f}\cdot \vect{u}_h \, \dx,
\end{align}
and $J_{2,h}(\vect{u}_h) = J_h(\vect{u}_h,\rho_h) - J_{1,h}(\vect{u}_h,\rho_h)$. It follows from \cref{hatuh1} and a small modification to the proof in \cite[Th.~3.1]{Borrvall2003} that
\begin{align}
J_{1,h}(\vect{\hat{u}},\hat{\rho}) \leq \liminf_{h\to 0} J_{1,h}(\vect{u}_h,\rho_h).
\end{align}
Moreover, it follows from a convergence result in Buffa and Ortner \cite[Th.~6.1]{Buffa2009} that 
\begin{align}
J_{2,h}(\vect{u}_h) \to \frac{\nu}{2}\int_\Omega |\nabla \vect{\hat{u}}|^2 \, \dx. 
\end{align}
Hence, we have the following weak lower semicontinuity result:
\begin{align}
J(\vect{\hat{u}}, \hat{\rho}) \leq \liminf_{h \to 0} J_h(\vect{u}_h, \rho_h). \label{lowersemi2}
\end{align}
Since $(\vect{u},\rho)$ is the unique minimizer  of \cref{borrvallmin} in 
\begin{align*}
(\Hgdiv \cap B_{r/2,\Hdiv}(\vect{u})) \times (C_\gamma \cap B_{r/2,L^2(\Omega)}(\rho)),
\end{align*}
we see that $J(\vect{u}, \rho) \leq J(\vect{\hat{u}},\hat{\rho})$. Hence, from \cref{eq:femapprox1} and \cref{lowersemi2}, it follows that
\begin{align}
J(\vect{\hat{u}}, \hat{\rho}) = J(\vect{u}, \rho).
\end{align} 
Since $(\vect{u}, \rho)$ is the unique minimizer in the spaces we consider, we identify the limits $\hat{\vect{u}}$ and $\hat{\rho}$ as $\vect{u}$ and $\rho$, respectively, and state that $\vect{u}_h \weak \vect{u}$ weakly in $\Hdiv$,  $\vect{u}_h \to \vect{u}$ strongly in $L^q(\Omega)^d$, $\vect{u}_h \to \vect{u}$ strongly in $L^r(\partial \Omega)^d$ and $\rho_h \weak \rho$ weakly in $L^2(\Omega)$, where $1\leq q,r < \infty$ in two dimensions and $1 \leq q <6$, $1 \leq r <4$ in three dimensions.

We note that by the Banach--Alaoglu theorem \cite[Th.~A.52]{Fonseca2006},  the closed unit ball of the dual space of a normed vector space, (for example $L^1(\Omega)$), is compact in the weak-* topology. Hence we also find a subsequence such that $\rho_h \weakstar \hat{\rho} \in C_\gamma \cap \{\eta: \|\rho-\eta\|_{L^\infty(\Omega)} \leq r/2\}$ weakly-* in $L^\infty(\Omega)$. By the uniqueness of the weak limit, we identify $\hat{\rho} = \rho$ a.e.~in $\Omega$ and, thus, we deduce that $\rho_h \weakstar \rho$ weakly-* in $L^\infty(\Omega)$. Consequently, $\rho_h \weak \rho$ weakly in $L^s(\Omega)$ for all $s \in [1,\infty)$. 
\end{proof}

\begin{proposition}[Strong convergence of $\rho_h$ in $L^s(\Omega)$, $s\in[1,\infty)$]
\label{prop:FEMconvergence1.5}
There exists a subsequence of minimizers, $(\rho_h)$, of \cref{BPh} such that
\begin{align}
\rho_h \to \rho \; \text{strongly in} \; L^s(\Omega), \;\; s \in [1,\infty).
\end{align}
\end{proposition}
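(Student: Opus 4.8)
The plan is to upgrade the weak convergence $\rho_h \weak \rho$ established in \cref{prop:FEMconvergence} to strong convergence, by exploiting the strong convexity of $\alpha$ in \labelcref{alpha2} together with a convergence-of-energy argument. First I would record that the two bounds obtained inside the proof of \cref{prop:FEMconvergence}, namely $\limsup_{h\to0} J_h(\vect{u}_h,\rho_h) \le J(\vect{u},\rho)$ from \cref{eq:femapprox1} and $J(\vect{u},\rho) \le \liminf_{h\to0} J_h(\vect{u}_h,\rho_h)$ from \cref{lowersemi2} (after the identification $(\hat{\vect{u}},\hat\rho)=(\vect{u},\rho)$), together force the energy convergence $J_h(\vect{u}_h,\rho_h) \to J(\vect{u},\rho)$. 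Since the force term satisfies $\int_\Omega \vect{f}\cdot\vect{u}_h\,\dx \to \int_\Omega \vect{f}\cdot\vect{u}\,\dx$ (by the strong $L^2$-convergence of $\vect{u}_h$) and the broken viscous part $J_{2,h}(\vect{u}_h)$ converges to $\tfrac{\nu}{2}\int_\Omega|\nabla\vect{u}|^2\,\dx$ (the Buffa--Ortner result \cite[Th.~6.1]{Buffa2009} already used in \cref{prop:FEMconvergence}), subtracting these two convergent pieces from $J_h$ isolates the remaining term and yields $\int_\Omega \alpha(\rho_h)|\vect{u}_h|^2\,\dx \to \int_\Omega \alpha(\rho)|\vect{u}|^2\,\dx$. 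Replacing $|\vect{u}_h|^2$ by $|\vect{u}|^2$ is then harmless, because $\alpha(\rho_h)$ is bounded in $L^\infty$ by $\bar\alpha$ and $|\vect{u}_h|^2 \to |\vect{u}|^2$ strongly in $L^1(\Omega)$; hence $\int_\Omega \alpha(\rho_h)|\vect{u}|^2\,\dx \to \int_\Omega \alpha(\rho)|\vect{u}|^2\,\dx$.

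Next I would invoke the strong convexity of $\alpha$, with modulus $m>0$, in the pointwise form $\alpha(\rho_h)-\alpha(\rho)-\alpha'(\rho)(\rho_h-\rho) \ge \tfrac{m}{2}(\rho_h-\rho)^2$. Multiplying by $|\vect{u}|^2 \ge 0$ and integrating gives
\[
\frac{m}{2}\int_\Omega (\rho_h-\rho)^2|\vect{u}|^2\,\dx \le \int_\Omega[\alpha(\rho_h)-\alpha(\rho)]|\vect{u}|^2\,\dx - \int_\Omega \alpha'(\rho)(\rho_h-\rho)|\vect{u}|^2\,\dx.
\]
The first term on the right tends to $0$ by the previous paragraph, while the second tends to $0$ because $\rho_h \weak \rho$ weakly in $L^2(\Omega)$ and the weight $\alpha'(\rho)|\vect{u}|^2$ lies in $L^2(\Omega)$ (using that $\alpha'$ is bounded by \labelcref{alpha4} and $\vect{u}\in L^4(\Omega)^d$ via the Sobolev embedding for $d\in\{2,3\}$). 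Hence $\int_\Omega(\rho_h-\rho)^2|\vect{u}|^2\,\dx \to 0$, i.e.\ $\rho_h\to\rho$ strongly in the weighted space $L^2(\Omega;|\vect{u}|^2\,\dx)$. Passing to a subsequence, $(\rho_h-\rho)^2|\vect{u}|^2 \to 0$ a.e., so $\rho_h\to\rho$ a.e.\ on $\{|\vect{u}|>0\}$; since $0\le|\rho_h-\rho|\le1$ on the bounded domain $\Omega$, dominated convergence yields $\rho_h\to\rho$ in $L^s(\{|\vect{u}|>0\})$ for every $s\in[1,\infty)$.

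It remains to control the \emph{dead zone} $Z:=\{|\vect{u}|=0\}$, where the weight vanishes and the weighted estimate is vacuous. Here I would use the isolation hypothesis: since $|\vect{u}|=0$ on $Z$, the value of $\rho$ on $Z$ enters $J(\vect{u},\cdot)$ only through the volume constraint, so if $\rho$ took a value in $(0,1)$ on a positive-measure subset of $Z$ one could redistribute mass within $Z$ (raising $\rho$ on one part and lowering it by a compensating amount on another, keeping the total mass and the box bounds) to produce a distinct admissible material distribution with the \emph{same} energy. This would contradict that $(\vect{u},\rho)$ is the unique minimizer in $B_{r, \Hdiv \times L^2(\Omega)}(\vect{u},\rho) \cap (\Hgdiv \times C_{\gamma})$, so $\rho\in\{0,1\}$ a.e.\ on $Z$. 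Splitting $Z = (Z\cap\{\rho=0\})\cup(Z\cap\{\rho=1\})$ and using $0\le\rho_h\le1$ with $s\ge1$, one obtains $\int_Z|\rho_h-\rho|^s\,\dx \le \int_{Z\cap\{\rho=0\}}\rho_h\,\dx + \int_{Z\cap\{\rho=1\}}(1-\rho_h)\,\dx$, and both integrals vanish in the limit by testing the weak convergence $\rho_h\weak\rho$ against the characteristic functions of $Z\cap\{\rho=0\}$ and $Z\cap\{\rho=1\}$. Hence $\rho_h\to\rho$ in $L^s(Z)$, and combining with the convergence on $\{|\vect{u}|>0\}$ gives $\rho_h\to\rho$ strongly in $L^s(\Omega)$ for all $s\in[1,\infty)$.

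The main obstacle is precisely this dead zone: the convexity-plus-energy machinery only controls $\rho_h$ where $|\vect{u}|$ is bounded away from zero, and weak convergence alone never upgrades to strong convergence. The crux is therefore to show that the isolation of the minimizer \emph{rigidifies} $\rho$ to the extreme values $\{0,1\}$ on $\{|\vect{u}|=0\}$, after which the one-sided box constraints convert weak convergence into strong convergence there; by contrast the convexity argument on $\{|\vect{u}|>0\}$ is routine.
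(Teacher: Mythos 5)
Your proof is correct, but it takes a genuinely different route from the paper's. The paper derives its key estimate from the continuous and discrete first-order optimality conditions in $\rho$: combining \cref{FOC3} with the discrete variational inequality satisfied by the minimizer $\rho_h$, applying the mean value theorem to $\alpha'$, and using the lower bound $\alpha''\geq\alpha''_{\mathrm{min}}>0$ yields the quantitative estimate $\|\rho-\rho_h\|^2_{L^2(U_\theta)}\leq C\left(\|\rho-\eta_h\|_{L^2(\Omega)}+\|\rho-\rho_h\|_{L^q(\Omega)}\|\vect{u}-\vect{u}_h\|_{L^2(\Omega)}\right)$, which ties the material-distribution error to a best-approximation error and the velocity error (and could in principle be turned into a rate). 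You instead extract the energy convergence $J_h(\vect{u}_h,\rho_h)\to J(\vect{u},\rho)$ from \cref{eq:femapprox1} and \cref{lowersemi2}, isolate the $\int_\Omega\alpha(\rho_h)|\vect{u}_h|^2\,\dx$ term, and feed it into the pointwise strong-convexity inequality for $\alpha$; this is purely qualitative but more elementary, and it exploits the same structural fact \labelcref{alpha2} in the same essential way---both arguments only control $\rho_h$ where $|\vect{u}|^2$ is bounded away from zero. The treatments of the degenerate set also differ: the paper splits $\Omega$ into $U_\theta$, $U\setminus U_\theta$, and $\Omega\setminus U$ with $U=\mathrm{supp}(\vect{u})$, invokes \cref{prop:rhosupport} and the Petersson argument (\cref{prop:Omegab}) only on $\Omega\setminus U$, and disposes of $U\setminus U_\theta$ by a measure-smallness argument in the double limit $h\to0$, $\theta\to0$; you work directly with $Z=\{\vect{u}=\vect{0}\}$, prove the rigidity statement that $\rho\in\{0,1\}$ a.e.\ on $Z$ (a mild strengthening of \cref{prop:rhosupport}, justified by the same perturb-$\rho$-where-$\vect{u}$-vanishes idea---indeed, since the volume constraint is one-sided you could simply lower $\rho$ on $Z$ and conclude $\rho=0$ a.e.\ there), and then run the Petersson weak-$*$ argument on all of $Z$. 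Your two-way split is arguably cleaner, avoiding the intermediate annulus $U\setminus U_\theta$ and the iterated limit, at the cost of having to establish the rigidity claim rather than quoting \cref{prop:rhosupport} verbatim.
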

The proof of \cref{prop:FEMconvergence1.5}, with some small modifications, can be found in \cite[Prop.~5]{Papadopoulos2021b}. For the convenience of the reader, we reproduce the proof in \cref{app:prop1.5} with the necessary changes.

\begin{proposition}[Strong convergence of $\vect{u}_h$ in the ${H^1_{\vect{g}}(\mathcal{T}_h)}$-norm]
\label{prop:FEMconvergence2}
There exists a subsequence of minimizers, $(\vect{u}_h)$, of \cref{BPh} such that
\begin{align}
\|\vect{u} - \vect{u}_h\|_{H^1_{\vect{g}}(\mathcal{T}_h)} \to 0.
\end{align}
\end{proposition}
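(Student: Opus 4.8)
The plan is to exploit the fact that, for the fixed material distribution $\rho_h$, the discrete functional $J_h(\cdot,\rho_h)$ is a \emph{coercive quadratic} functional and that $\vect{u}_h$ minimizes it over a \emph{convex} feasible set. A short computation matching the boundary contributions shows the energy identity
\begin{align*}
J_h(\vect{v}_h,\rho_h) = \tfrac12 a_h(\vect{v}_h,\vect{v}_h;\rho_h) - l_h(\vect{v}_h;\vect{g}_h) + \tfrac{\nu}{2}\sum_{F\in\mathcal{F}^\partial_h}\sigma h_F^{-1}\int_F |\lsb\vect{g}_h\rsb_F|^2\,\mathrm{d}s,
\end{align*}
so that the quadratic part of $J_h(\cdot,\rho_h)$ is the symmetric form $\tfrac12 a_h(\cdot,\cdot;\rho_h)$, which is coercive by \cref{prop:coercivity}. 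As a competitor I would use the strongly converging recovery sequence $\vect{\tilde{u}}_h \in \vect{U}_{h,\vect{g}_h}$ furnished by \cref{lemma:strongv}; for $h$ small it lies in the feasible ball (since the broken $H^1_{\vect{g}}$-norm dominates the $\Hdiv$-norm and $\vect{\tilde{u}}_h$ is divergence-free), and I set $\vect{w}_h \coloneqq \vect{\tilde{u}}_h - \vect{u}_h \in \vect{U}_{h,0}$.

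Since $\vect{u}_h$ minimizes the convex functional $J_h(\cdot,\rho_h)$ over the convex set $\vect{U}_{h,\vect{g}_h}\cap B_{r/2,\Hdiv}(\vect{u})$, differentiating along the feasible segment $\vect{u}_h + t\vect{w}_h$ gives the variational inequality $a_h(\vect{u}_h,\vect{w}_h;\rho_h) - l_h(\vect{w}_h;\vect{g}_h) \geq 0$. Combining this with the exact quadratic expansion
\begin{align*}
J_h(\vect{\tilde{u}}_h,\rho_h) - J_h(\vect{u}_h,\rho_h) = \big(a_h(\vect{u}_h,\vect{w}_h;\rho_h) - l_h(\vect{w}_h;\vect{g}_h)\big) + \tfrac12 a_h(\vect{w}_h,\vect{w}_h;\rho_h)
\end{align*}
yields the key bound $\tfrac12 a_h(\vect{w}_h,\vect{w}_h;\rho_h) \leq J_h(\vect{\tilde{u}}_h,\rho_h) - J_h(\vect{u}_h,\rho_h)$.

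It then suffices to show the right-hand side vanishes. Sandwiching \cref{eq:femapprox1} against \cref{lowersemi2} (both limits being $J(\vect{u},\rho)$) gives $J_h(\vect{u}_h,\rho_h)\to J(\vect{u},\rho)$. For the competitor energy I would repeat the estimate \cref{femconvergence1.5} with $\rho_h$ in place of $\tilde{\rho}_h$, using that $\rho_h\to\rho$ \emph{strongly} in $L^2(\Omega)$ by \cref{prop:FEMconvergence1.5}, together with $\|\vect{u}-\vect{\tilde{u}}_h\|_{H^1_{\vect{g}}(\mathcal{T}_h)}\to 0$ and \labelcref{ass:gh}; this gives $J_h(\vect{\tilde{u}}_h,\rho_h)\to J(\vect{u},\rho)$ as well, whence $a_h(\vect{w}_h,\vect{w}_h;\rho_h)\to 0$. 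By \cref{prop:coercivity} this already forces $\|\vect{w}_h\|_{H^1(\mathcal{T}_h)}\to 0$, and applying the Buffa--Ortner bound \cref{ineq:buffa} to absorb the consistency terms shows that, for $\sigma$ large, $a_h(\vect{w}_h,\vect{w}_h;\rho_h)$ also dominates $\nu(\sigma-C)\sum_{F\in\mathcal{F}^\partial_h} h_F^{-1}\int_F|\lsb\vect{w}_h\rsb_F|^2\,\mathrm{d}s$, so the \emph{boundary} jumps of $\vect{w}_h$ vanish too.

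Finally I would reassemble the error norm. The volume and interior contributions follow from the triangle inequality $\|\vect{u}-\vect{u}_h\|_{H^1(\mathcal{T}_h)}\leq \|\vect{u}-\vect{\tilde{u}}_h\|_{H^1_{\vect{g}}(\mathcal{T}_h)} + \|\vect{w}_h\|_{H^1(\mathcal{T}_h)}\to 0$. For the boundary penalty I would decompose, on $\partial\Omega$, $\vect{u}_h-\vect{g} = (\vect{\tilde{u}}_h-\vect{g}) - \vect{w}_h$ and bound $\sum_{F\in\mathcal{F}^\partial_h}h_F^{-1}\int_F|\lsb\vect{u}_h-\vect{g}\rsb_F|^2\,\mathrm{d}s$ by the (now vanishing) boundary jump of $\vect{w}_h$, the boundary part of $\|\vect{u}-\vect{\tilde{u}}_h\|_{H^1_{\vect{g}}(\mathcal{T}_h)}$, and the data term $\sum_{F\in\mathcal{F}^\partial_h}h_F^{-1}\int_F|\vect{g}-\vect{g}_h|^2\,\mathrm{d}s \leq C h^{-1}\|\vect{g}-\vect{g}_h\|^2_{L^2(\partial\Omega)}\to 0$, invoking \labelcref{ass:gh} and the boundary regularity \labelcref{ass:M3}. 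I expect the main obstacle to be precisely this boundary-penalty term: because it carries the weight $h_F^{-1}$, strong $L^2(\partial\Omega)$ convergence of $\vect{u}_h$ alone is insufficient, and one must extract its smallness from the energy bound on $\vect{w}_h$ together with the sharp boundary-data assumption \labelcref{ass:gh}. The other delicate step is the convergence $J_h(\vect{\tilde{u}}_h,\rho_h)\to J(\vect{u},\rho)$ of the competitor energy, which relies essentially on the \emph{strong} (not merely weak) $L^s$-convergence of $\rho_h$ established in \cref{prop:FEMconvergence1.5}.
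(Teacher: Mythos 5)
Your proposal is correct, and it reaches the conclusion by a genuinely different mechanism than the paper. Both arguments share the same skeleton: convexity of $\vect{U}_{h,\vect{g}_h}\cap B_{r/2,\Hdiv}(\vect{u})$ gives the variational inequality $a_h(\vect{u}_h,\vect{w}_h;\rho_h)\geq l_h(\vect{w}_h;\vect{g}_h)$, the competitor is the recovery sequence $\vect{\tilde{u}}_h$ of \cref{lemma:strongv}, and the strong $L^s$-convergence of $\rho_h$ from \cref{prop:FEMconvergence1.5} is essential. The paper then proceeds in a C\'ea/Strang fashion: it invokes the consistency identity of \cref{prop:consistencyah} (which is where the hypothesis $\vect{u}\in H^r(\Omega)^d$, $r>3/2$, enters), subtracts $a_h(\vect{w}_h,\vect{u}_h-\vect{w}_h;\rho_h)$, and uses coercivity and boundedness of $a_h$ to obtain the quasi-optimal bound \cref{velerror6} for $\|\vect{u}_h-\vect{w}_h\|_{H^1(\mathcal{T}_h)}$ in terms of best-approximation quantities, $\|(\alpha(\rho)-\alpha(\rho_h))\vect{u}\|_{L^2(\Omega)}$ and $h^{-1}\|\vect{g}-\vect{g}_h\|_{L^2(\partial\Omega)}$. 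You instead exploit that $J_h(\cdot,\rho_h)$ is exactly quadratic with symmetric part $\tfrac12 a_h$, so the variational inequality plus the exact expansion squeezes $\tfrac12 a_h(\vect{w}_h,\vect{w}_h;\rho_h)\leq J_h(\vect{\tilde{u}}_h,\rho_h)-J_h(\vect{u}_h,\rho_h)$, and the right-hand side vanishes by the energy sandwich \cref{eq:femapprox1}/\cref{lowersemi2} together with a rerun of \cref{femconvergence1.5}. The trade-off: the paper's route yields a reusable a priori estimate (it is recycled almost verbatim for the pressure in \cref{prop:FEMconvergence4}) and in principle convergence rates, at the cost of needing consistency; your route needs no consistency at this step but produces only convergence without a rate. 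Your treatment of the $h_F^{-1}$-weighted boundary penalty — extracting the vanishing of the boundary jumps of $\vect{w}_h$ from the coercivity of $a_h$ over \emph{all} facets rather than from strong $L^2(\partial\Omega)$ convergence of $\vect{u}_h$ — is in fact more careful than the paper's closing sentence, and is the right way to finish, since $\|\vect{u}_h-\vect{g}\|_{L^2(\partial\Omega)}\to 0$ alone does not control a term weighted by $h_F^{-1}$.
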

\begin{proof}
We note that $\vect{U}_{h,\vect{g}_h} \cap B_{r/2,\Hdiv}(\vect{u})$ is a convex set, and hence for any $\vect{w}_h \in \vect{U}_{h,\vect{g}_h} \cap B_{r/2,\Hdiv}(\vect{u})$, $t \in [0,1]$, we have that $ \vect{u}_h  + t(\vect{w}_h - \vect{u}_h) \in \vect{U}_{h,\vect{g}_h} \cap B_{r/2,\Hdiv}(\vect{u})$. Since $(\vect{u}_h, \rho_h)$ is a global minimizer of \cref{BPh}, we note that
\begin{align}
\frac{1}{t} \left[ J_h(\vect{u}_h+ t(\vect{w}_h - \vect{u}_h),  \rho_h) - J_h(\vect{u}_h, \rho_h) \right] \geq 0.
\end{align}
By taking the limit $t \to 0$, a calculation shows that, for all $\vect{w}_h \in \vect{U}_{h,\vect{g}_h} \cap B_{r/2,\Hdiv}(\vect{u})$,
\begin{align}
a_h(\vect{u}_h, \vect{w}_h - \vect{u}_h; \rho_h) \geq l_h(\vect{w}_h - \vect{u}_h; \vect{g}_h).  \label{velerror1}
\end{align}
We note that $\vect{w}_h - \vect{u}_h \in \vect{U}_{h,0}$. Hence, from \cref{prop:consistencyah} and \cref{lem:pointwisedivfree}, we deduce that
\begin{align}
a_h(\vect{u}, \vect{w}_h - \vect{u}_h; \rho) = l_h(\vect{w}_h - \vect{u}_h; \vect{g}). \label{velerror2}
\end{align}
Therefore, from \cref{velerror1} and \cref{velerror2}, we see that
\begin{align}
\label{velerror3}
\begin{split}
a_h(\vect{u}_h, \vect{u}_h - \vect{w}_h; \rho_h) &\leq a_h(\vect{u}, \vect{u}_h - \vect{w}_h; \rho) \\
& \indent + l_h(\vect{u}_h-\vect{w}_h; \vect{g}_h) -  l_h(\vect{u}_h-\vect{w}_h; \vect{g}).
\end{split}
\end{align}
Hence, by subtracting $a_h(\vect{w}_h, \vect{u}_h - \vect{w}_h; \rho_h)$ from both sides of \cref{velerror3}, and utilizing the coercivity of $a_h(\cdot, \cdot; \cdot)$ as stated in \cref{prop:coercivity}, we have that
\begin{align}
\label{velerror4}
\begin{split}
c_a \| \vect{u}_h - \vect{w}_h \|^2_{H^1(\mathcal{T}_h)} &\leq a_h(\vect{u}, \vect{u}_h - \vect{w}_h; \rho) - a_h(\vect{w}_h, \vect{u}_h - \vect{w}_h; \rho_h) \\
&\indent + l_h(\vect{u}_h-\vect{w}_h; \vect{g}_h) -  l_h(\vect{u}_h-\vect{w}_h; \vect{g}).
\end{split}
\end{align}
Now by assumption \labelcref{ass:M3}, for all $F \in \mathcal{F}^\partial_h$ there exists a $c>0$ such that $h^{-1}_F \leq ch^{-1}$, where $c$ depends on the mesh regularity. By taking the absolute value of the right-hand side of \cref{velerror4}, collecting terms, utilizing the inequality \cref{ineq:buffa}, and the boundedness of $a_h$ by $C_a$ as stated in \cref{prop:coercivity}, we have that
\begin{align}
\begin{split}
c_a\|\vect{u}_h - \vect{w}_h\|^2_{H^1(\mathcal{T}_h)}&\leq  \bar \alpha \|\vect{u}-\vect{w}_h\|_{L^2(\Omega)}\|\vect{u}_h-\vect{w}_h\|_{L^2(\Omega)}\\
&\indent+  \|(\alpha(\rho) - \alpha(\rho_h)) \vect{u} \|_{L^{2}(\Omega)} \|\vect{u}_h-\vect{w}_h\|_{L^2(\Omega)}\\
&\indent+ C_a \|\vect{u} - \vect{w}_h\|_{H^1(\mathcal{T}_h)}\|\vect{u}_h-\vect{w}_h\|_{H^1(\mathcal{T}_h)}\\
& \indent + C h^{-1} \| \vect{g} - \vect{g}_h \|_{L^2(\partial \Omega)} \| \vect{u}_h - \vect{w}_h \|_{L^2( \partial \Omega)}\\
&  \indent + C h^{-1} \| \vect{g} - \vect{g}_h \|_{L^2(\partial \Omega)} \| \vect{u}_h - \vect{w}_h \|_{H^1(\mathcal{T}_h)},
\end{split}
\label{velerror7}
\end{align}
for some constant $C = C(\sigma)$ that also depends on the mesh regularity. 

We note that  $\|\vect{u}-\vect{w}_h\|_{L^2(\Omega)} \leq \|\vect{u}_h - \vect{w}_h\|_{H^1(\mathcal{T}_h)}$ by definition. Moreover, by the broken trace theorem as found in Buffa and Ortner \cite[Th.~4.4]{Buffa2009}, there exists a constant $C_{\mathrm{BT}}$ such that, for all $\vect{v} \in H^1(\mathcal{T}_h)^d$, $d \in \{2,3\}$ we have
\begin{align}
\| \vect{v} \|_{L^2(\partial \Omega)} \leq C_{\mathrm{BT}} \| \vect{v} \|_{H^1(\mathcal{T}_h)}. 
\end{align}
Therefore, by bounding the $L^2(\Omega)$ and $L^2(\partial \Omega)$-norms of $\vect{u}_h - \vect{w}_h$ above by the broken $H^1$-norm, and dividing through by $c_a\|\vect{u}_h - \vect{w}_h\|_{H^1(\mathcal{T}_h)}$ we see that
\begin{align}
\begin{split}
\|\vect{u}_h - \vect{w}_h\|_{H^1(\mathcal{T}_h)}&\leq  C \|\vect{u}-\vect{w}_h\|_{L^2(\Omega)}
+  C\|(\alpha(\rho) - \alpha(\rho_h)) \vect{u} \|_{L^{2}(\Omega)} \\
&\indent+ C \|\vect{u} - \vect{w}_h\|_{H^1(\mathcal{T}_h)}+ C  h^{-1} \| \vect{g} - \vect{g}_h \|_{L^2(\partial \Omega)},\\
\end{split} \label{velerror6}
\end{align}
for some constant $C$ that depends on $c_a, C_a, C_{\mathrm{BT}}, \bar \alpha, \sigma$ and the mesh regularity. 

For sufficiently small $h$, we note that $\vect{\tilde{u}}_h \in \vect{U}_{h,\vect{g}_h} \cap B_{r/2,\Hdiv}(\vect{u})$ (where $\vect{\tilde{u}}_h$ is defined in \cref{lemma:strongv}) and $\vect{\tilde{u}}_h \to \vect{u}$ strongly in $H^1(\mathcal{T}_h)^d$. Fix $\vect{w}_h =\vect{\tilde{u}}_h$.

From \cref{prop:FEMconvergence1.5}, we know that there exists a subsequence (not indicated) such that  $\rho_h \to \rho$ strongly in $L^4(\Omega)$. We now observe that
\begin{align}
\label{velerror5}
\|(\alpha(\rho) - \alpha(\rho_h)) \vect{u} \|_{L^{2}(\Omega)} 
\leq L_\alpha  \|\rho - \rho_h\|_{L^4(\Omega)} \| \vect{u}\|_{L^{4}(\Omega)},
\end{align}
where $L_\alpha$ is the Lipschitz constant for $\alpha$. $\| \vect{u}\|_{L^{4}(\Omega)}$ is bounded for $d \in \{2,3\}$ thanks to the Sobolev embedding theorem. Hence, by taking the limit as $h\to 0$ in \cref{velerror6}, from \labelcref{ass:gh}, \cref{lemma:strongv}, and \cref{velerror5}, we deduce that $\| \vect{u} - \vect{u}_h\|_{H^1(\mathcal{T}_h)} \to 0$ as $h \to 0$. In \cref{prop:FEMconvergence}, we showed that $\vect{u}_h \to \vect{u}$ strongly in $L^2(\partial \Omega)^d$. Hence, we conclude that $\| \vect{u} - \vect{u}_h\|_{H^1_{\vect{g}}(\mathcal{T}_h)} \to 0$ as $h \to 0$. 
\end{proof}

\begin{proposition}[Discretized first-order optimality conditions]
\label{prop:FEMconvergence3}
There exists an $\bar h > 0$ such that for all $h < \bar h$, there exists a unique Lagrange multiplier $p_h \in M_h$ such that the functions $(\vect{u}_h, \rho_h)$, that minimize \cref{BPh}, satisfy the first-order optimality conditions \cref{FOC1h}--\cref{FOC3h}.
\end{proposition}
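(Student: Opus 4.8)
The plan is to use the strong convergence already established in \cref{prop:FEMconvergence1.5} and \cref{prop:FEMconvergence2} to show that, for $h$ small enough, the ball constraints in \cref{BPh} are inactive, so that near $(\vect{u}_h, \rho_h)$ the admissible set coincides with $\vect{U}_{h,\vect{g}_h} \times C_{\gamma,h}$; the optimality conditions then follow by taking admissible variations. First I would fix $\bar h$ as follows. Since both $\vect{u}$ and $\vect{u}_h$ are pointwise divergence-free by \cref{lem:pointwisedivfree}, we have $\divv{\vect{u}-\vect{u}_h} = 0$ and hence $\|\vect{u} - \vect{u}_h\|_{\Hdiv} = \|\vect{u} - \vect{u}_h\|_{L^2(\Omega)} \leq \|\vect{u} - \vect{u}_h\|_{H^1(\mathcal{T}_h)}$, which tends to zero by \cref{prop:FEMconvergence2}. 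Combined with $\|\rho - \rho_h\|_{L^2(\Omega)} \to 0$ from \cref{prop:FEMconvergence1.5}, there is an $\bar h > 0$ such that for all $h < \bar h$ both $\|\vect{u} - \vect{u}_h\|_{\Hdiv} < r/2$ and $\|\rho - \rho_h\|_{L^2(\Omega)} < r/2$; that is, $\vect{u}_h$ and $\rho_h$ lie in the \emph{interior} of their respective balls.

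Next I would derive the velocity condition. As the ball constraint on $\vect{u}_h$ is now inactive and $\vect{U}_{h,\vect{g}_h}$ is affine with linear part $\vect{U}_{h,0}$, for any $\vect{v}_h \in \vect{U}_{h,0}$ both perturbations $\vect{u}_h \pm t \vect{v}_h$ remain admissible for sufficiently small $t > 0$. Differentiating $J_h(\vect{u}_h + t \vect{v}_h, \rho_h)$ at $t = 0$ and invoking stationarity at the minimizer upgrades the one-sided inequality from the proof of \cref{prop:FEMconvergence2} to the equality
\begin{align*}
a_h(\vect{u}_h, \vect{v}_h; \rho_h) = l_h(\vect{v}_h; \vect{g}_h) \quad \text{for all } \vect{v}_h \in \vect{U}_{h,0}.
\end{align*}
To recover the pressure, I would observe that $\vect{U}_{h,0}$ is exactly the kernel of $b(\cdot,\cdot)$ restricted to $\vect{X}_{h,0} \times M_h$, so the bounded functional $\vect{v}_h \mapsto l_h(\vect{v}_h; \vect{g}_h) - a_h(\vect{u}_h, \vect{v}_h; \rho_h)$ on $\vect{X}_{h,0}$ annihilates this kernel. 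The discrete inf-sup condition \labelcref{ass:discreteinfsup} ensures that $b$ has closed range, whence the standard saddle-point (discrete de Rham) argument produces a \emph{unique} $p_h \in M_h$ with
\begin{align*}
a_h(\vect{u}_h, \vect{v}_h; \rho_h) + b(\vect{v}_h, p_h) = l_h(\vect{v}_h; \vect{g}_h) \quad \text{for all } \vect{v}_h \in \vect{X}_{h,0},
\end{align*}
which is \cref{FOC1h}. Condition \cref{FOC2h} is immediate, since $\vect{u}_h \in \vect{U}_{h,\vect{g}_h}$ satisfies $b(\vect{u}_h, q_h) = 0$ for all $q_h \in M_h$ by definition.

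Finally I would treat the material distribution. With the $L^2$-ball inactive and $C_{\gamma,h}$ convex, for each $\eta_h \in C_{\gamma,h}$ the path $\rho_h + t(\eta_h - \rho_h)$ remains in $C_{\gamma,h} \cap B_{r/2,L^2(\Omega)}(\rho)$ for all sufficiently small $t \in (0,1]$. Since $J_h$ depends on $\rho_h$ only through $\tfrac{1}{2} \int_\Omega \alpha(\rho_h) |\vect{u}_h|^2 \, \dx$, minimality forces the one-sided derivative at $t = 0$ to be nonnegative, and differentiating under the integral sign (using \labelcref{alpha4}) gives
\begin{align*}
0 \leq \tfrac{1}{2} \int_\Omega \alpha'(\rho_h)(\eta_h - \rho_h) |\vect{u}_h|^2 \, \dx = c(\rho_h, \eta_h - \rho_h; \vect{u}_h),
\end{align*}
which is \cref{FOC3h}. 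I expect the inactivity-of-the-balls step to be the main obstacle: it is exactly what permits replacing the one-sided variational inequality of \cref{prop:FEMconvergence2} by a two-sided equality for the velocity, and it is available only because the divergence-free constraint collapses the $\Hdiv$-distance to the $L^2$-distance, so that the broken $H^1(\mathcal{T}_h)$-convergence controls membership in the $\Hdiv$-ball. The second delicate point is the recovery of $p_h$, which hinges essentially on the discrete inf-sup condition \labelcref{ass:discreteinfsup}.
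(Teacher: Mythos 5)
Your proposal is correct and follows essentially the same route as the paper: establish that for small $h$ the ball constraints are inactive by noting that the pointwise divergence-free property collapses the $\Hdiv$-distance to the $L^2$-distance controlled by the broken $H^1(\mathcal{T}_h)$-convergence, then take two-sided variations in $\vect{U}_{h,0}$ to get the equality $a_h(\vect{u}_h,\vect{v}_h;\rho_h)=l_h(\vect{v}_h;\vect{g}_h)$, recover $p_h$ from the discrete inf-sup condition \labelcref{ass:discreteinfsup}, and use one-sided variations in the convex set $C_{\gamma,h}$ for \cref{FOC3h}. The only cosmetic difference is your choice of radius $r/2$ with strict inequality versus the paper's $r/4$ margin.
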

\begin{proof}
From \cref{prop:FEMconvergence2}, we know that $\vect{u}_h \to \vect{u}$ strongly in $H^1(\mathcal{T}_h)^d$. Hence by definition of strong convergence, there exists an $\bar h_1>0$ such that, for all $h \leq \bar h_1$, $\|\vect{u} - \vect{u}_{h}\|_{H^1(\mathcal{T}_h)} \leq r/4$. Moreover, since $\vect{u} \in \Hu$, we have that $\divv{\vect{u}} = 0$ a.e.~in $\Omega$ and by \cref{lem:pointwisedivfree}, we have that $\divv{\vect{u}_h} = 0$ a.e.~in $\Omega$. Therefore,
\begin{align}
\begin{split}
\| \vect{u} - \vect{u}_h \|^2_{\Hdiv} &= \| \vect{u} - \vect{u}_h \|^2_{L^2(\Omega)} + \| \divv{\vect{u} - \vect{u}_h} \|^2_{L^2(\Omega)}\\
& = \| \vect{u} - \vect{u}_h \|^2_{L^2(\Omega)} \leq \|\vect{u} - \vect{u}_{h}\|^2_{H^1(\mathcal{T}_h)} \leq r^2/16.
\end{split}
\end{align}

Hence, for each $\vect{v}_h \in \vect{U}_{h,0}$, if $|t|<r/(4\|\vect{v}_h\|_{H^1(\mathcal{T}_h)})$ then $\vect{u}_h + t \vect{v}_h \in \vect{U}_{h,\vect{g}_h} \cap B_{r/2,\Hdiv}(\vect{u})$. From \cref{prop:FEMconvergence1.5} we have that $\rho_h \to \rho$ strongly in $L^2(\Omega)$. Hence, there exists an $\bar h_2>0$ such that, for all $h \leq \bar h_2$, $\|\rho - \rho_{h}\|_{L^2(\Omega)} \leq r/4$. Therefore, for each $\eta_h \in C_{\gamma,h}$, if $0 < t <r/(4\|\eta_h-\rho_h\|_{L^2(\Omega)})$ then $\rho_h + t (\eta_h- \rho_h) \in C_{\gamma,h} \cap B_{r/2,L^2(\Omega)}(\rho)$. Let $\bar h = \min \{\bar h_1, \bar h_2\}$ and consider $h \leq \bar h$. 

Since $(\vect{u}_h, \rho_h)$ is a global minimizer of \cref{BPh}, then, for all $\vect{v}_h \in \vect{U}_{h,0}$, if $|t|<r/(4\|\vect{v}_h\|_{H^1(\mathcal{T}_h)})$ we have
\begin{align}
\frac{1}{t} \left[ J_h(\vect{u}_h+ t\vect{v}_h,  \rho_h) - J_h(\vect{u}_h, \rho_h) \right] \geq 0.
\end{align}
By considering the limits for $t \to 0_+$ and $t \to 0_-$, we have that, for all $\vect{v}_h \in \vect{U}_{h,0}$,
\begin{align}
a_h(\vect{u}_h, \vect{v}_h; \rho_h) = l_h(\vect{v}_h; \vect{g}_h). 
\label{focproof1}
\end{align}
From \cref{focproof1}, the existence of a unique $p_h \in M_h$ such that $(\vect{u}_h, \rho_h, p_h)$ satisfy \cref{FOC1h}--\cref{FOC2h} follows from the inf-sup condition \labelcref{ass:discreteinfsup} and the argument can be found in \cite[Prop.~2]{Papadopoulos2021b}.

Similarly, since $(\vect{u}_h, \rho_h)$ is a global minimizer of \cref{BPh}, then, for all $\eta_h \in C_{\gamma,h}$, if $0 < t <r/(4\|\eta_h-\rho_h\|_{L^2(\Omega)})$ we have
\begin{align}
\frac{1}{t} \left[ J_h(\vect{u}_h,  \rho_h+t(\eta_h - \rho_h)) - J_h(\vect{u}_h, \rho_h) \right] \geq 0.
\end{align}
By taking the limit as $t \to 0$, we deduce that \cref{FOC3h} holds.
\end{proof}

\begin{proposition}[Strong convergence of $p_h$ in $L^2(\Omega)$]
\label{prop:FEMconvergence4}
There is a subsequence of the unique $p_h \in M_h$ defined in \cref{prop:FEMconvergence3} that converges strongly in $L^2(\Omega)$ to the $p\in L^2_0(\Omega)$ that solves \cref{FOC1}--\cref{FOC3} for the given isolated minimizer $(\vect{u}, \rho)$.
\end{proposition}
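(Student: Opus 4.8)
The plan is to recover the discrete pressure through the inf-sup condition \labelcref{ass:discreteinfsup} and to bound the resulting consistency error using the convergence of the velocity and material distribution already secured in \cref{prop:FEMconvergence2} and \cref{prop:FEMconvergence1.5}. First, by the density assumption \labelcref{ass:dense}, I would fix a sequence $\tilde{p}_h \in M_h$ with $\|p - \tilde{p}_h\|_{L^2(\Omega)} \to 0$. Since $\|p - p_h\|_{L^2(\Omega)} \leq \|p - \tilde{p}_h\|_{L^2(\Omega)} + \|\tilde{p}_h - p_h\|_{L^2(\Omega)}$, it suffices to show $\|\tilde{p}_h - p_h\|_{L^2(\Omega)} \to 0$ along the subsequence for which $\vect{u}_h \to \vect{u}$ in the broken $H^1$-norm and $\rho_h \to \rho$ in $L^4(\Omega)$. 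As $\tilde{p}_h - p_h \in M_h$, the inf-sup condition \labelcref{ass:discreteinfsup} gives
\begin{align*}
c_b \|\tilde{p}_h - p_h\|_{L^2(\Omega)} \leq \sup_{\vect{v}_h \in \vect{X}_{h,0} \setminus \{0\}} \frac{b(\vect{v}_h, \tilde{p}_h - p_h)}{\|\vect{v}_h\|_{H^1(\mathcal{T}_h)}},
\end{align*}
so the task reduces to bounding $b(\vect{v}_h, \tilde{p}_h - p_h)$ by $o(1)\,\|\vect{v}_h\|_{H^1(\mathcal{T}_h)}$ uniformly in $\vect{v}_h \in \vect{X}_{h,0}$.

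Next I would build the error equation. Because $\vect{X}_{h,0} \subset \vect{Z}_h \subset H^1(\mathcal{T}_h)^d$ consists of functions with $\vect{v}_h \cdot \vect{n} = 0$ on $\partial \Omega$, we have $\vect{X}_{h,0} \subset H^1(\mathcal{T}_h)^d \cap \Hzdiv$, so the consistency identity of \cref{prop:consistencyah} gives $b(\vect{v}_h, p) = l_h(\vect{v}_h; \vect{g}) - a_h(\vect{u}, \vect{v}_h; \rho)$, while \cref{FOC1h} gives $b(\vect{v}_h, p_h) = l_h(\vect{v}_h; \vect{g}_h) - a_h(\vect{u}_h, \vect{v}_h; \rho_h)$. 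Subtracting and inserting $p$ yields
\begin{align*}
b(\vect{v}_h, \tilde{p}_h - p_h) &= b(\vect{v}_h, \tilde{p}_h - p) + \big(a_h(\vect{u}_h, \vect{v}_h; \rho_h) - a_h(\vect{u}, \vect{v}_h; \rho)\big) \\
&\quad + \big(l_h(\vect{v}_h; \vect{g}) - l_h(\vect{v}_h; \vect{g}_h)\big).
\end{align*}
The first term I would bound by $\|\tilde{p}_h - p\|_{L^2(\Omega)}\|\divv{\vect{v}_h}\|_{L^2(\Omega)} \leq C\|\tilde{p}_h - p\|_{L^2(\Omega)}\|\vect{v}_h\|_{H^1(\mathcal{T}_h)}$, which is $o(1)\,\|\vect{v}_h\|_{H^1(\mathcal{T}_h)}$. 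For the second term, since $a_h$ depends on the material distribution only through the zeroth-order term, I would split
\begin{align*}
a_h(\vect{u}_h, \vect{v}_h; \rho_h) - a_h(\vect{u}, \vect{v}_h; \rho) = a_h(\vect{u}_h - \vect{u}, \vect{v}_h; \rho_h) + \sum_{K \in \mathcal{T}_h} \int_K (\alpha(\rho_h) - \alpha(\rho))\, \vect{u}\cdot\vect{v}_h \,\dx,
\end{align*}
estimating the first piece via the boundedness constant $C_a$ of \cref{prop:coercivity} together with $\|\vect{u} - \vect{u}_h\|_{H^1(\mathcal{T}_h)} \to 0$ from \cref{prop:FEMconvergence2}, and the second piece by $\|(\alpha(\rho_h) - \alpha(\rho))\vect{u}\|_{L^2(\Omega)}\|\vect{v}_h\|_{L^2(\Omega)}$, which after dividing by the norm tends to zero by the Lipschitz bound \cref{velerror5} and the $L^4$-convergence of $\rho_h$ from \cref{prop:FEMconvergence1.5}.

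The third term is the DG-specific one, and I expect it to be the main obstacle, since it involves the boundary-data mismatch weighted by inverse powers of $h$. Writing out $l_h(\vect{v}_h; \vect{g}) - l_h(\vect{v}_h; \vect{g}_h)$, the only surviving contributions are the boundary penalty and consistency terms carrying the jump $\lsb \vect{g} - \vect{g}_h \rsb_F$ on $F \in \mathcal{F}^\partial_h$. For the penalty contribution I would apply Cauchy--Schwarz over $\mathcal{F}^\partial_h$, assumption \labelcref{ass:M3} (so that $h_F^{-1} \leq c h^{-1}$) and the broken trace theorem of Buffa and Ortner \cite[Th.~4.4]{Buffa2009} in the form $\sum_{F \in \mathcal{F}^\partial_h} h_F^{-1}\|\vect{v}_h\|_{L^2(F)}^2 \leq C h^{-1}\|\vect{v}_h\|_{H^1(\mathcal{T}_h)}^2$, obtaining a bound of order $h^{-1}\|\vect{g} - \vect{g}_h\|_{L^2(\partial\Omega)}\|\vect{v}_h\|_{H^1(\mathcal{T}_h)}$; for the consistency contribution I would instead use Cauchy--Schwarz together with a discrete trace inequality bounding $\sum_{F \in \mathcal{F}^\partial_h} h_F\|\lcb \nabla \vect{v}_h \rcb_F\|_{L^2(F)}^2$ by $C\,|\vect{v}_h|_{H^1(\mathcal{T}_h)}^2$, giving a bound of order $h^{-1/2}\|\vect{g} - \vect{g}_h\|_{L^2(\partial\Omega)}\|\vect{v}_h\|_{H^1(\mathcal{T}_h)}$. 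Both prefactors vanish as $h \to 0$ by assumption \labelcref{ass:gh}, exactly as in the velocity estimate \cref{velerror6}. Combining the three groups, dividing through by $\|\vect{v}_h\|_{H^1(\mathcal{T}_h)}$ and taking the supremum shows $\|\tilde{p}_h - p_h\|_{L^2(\Omega)} \to 0$, whence $p_h \to p$ strongly in $L^2(\Omega)$; since $p_h \in M_h \subset L^2_0(\Omega)$ the limit has zero mean, and by the uniqueness of the Lagrange multiplier in \cref{prop:pressureexistence} it coincides with the $p$ associated to $(\vect{u}, \rho)$.
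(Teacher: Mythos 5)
Your proposal is correct and follows essentially the same route as the paper: recover the pressure error via the inf-sup condition \labelcref{ass:discreteinfsup}, form the error equation by subtracting \cref{FOC1h} from the consistency identity of \cref{prop:consistencyah}, and bound the resulting terms exactly as in the velocity estimate \cref{velerror6}, using \cref{prop:FEMconvergence2}, \cref{prop:FEMconvergence1.5}, \labelcref{ass:gh}, and the density of $M_h$. The only cosmetic difference is that you apply the inf-sup bound to $\tilde{p}_h - p_h$ and use the triangle inequality at the outset, whereas the paper inserts $p$ inside the supremum and concludes with the triangle inequality at the end.
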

\begin{proof}
The inf-sup condition \labelcref{ass:discreteinfsup} for $M_h$ and $\vect{X}_{h,0}$ implies that, for any $q_h \in M_h$,
\begin{align*}
c_b \|q_h-p_h\|_{L^2(\Omega)}  &\leq \sup_{\vect{w}_h \in \vect{X}_{h,0} \backslash \{0\}} \frac{b(\vect{w}_h,q_h-p_h)}{\|\vect{w}_h\|_{H^1(\mathcal{T}_h)}}\\
&=  \sup_{\vect{w}_h \in \vect{X}_{h,0}  \backslash \{0\}} \frac{b(\vect{w}_h,p-p_h)+b(\vect{w}_h,q_h-p)}{\|\vect{w}_h\|_{H^1(\mathcal{T}_h)}}\\
&\leq  \sup_{\vect{w}_h \in \vect{X}_{h,0}  \backslash \{0\}} \frac{|b(\vect{w}_h,p-p_h)|+|b(\vect{w}_h,q_h-p)|}{\|\vect{w}_h\|_{H^1(\mathcal{T}_h)}}.
\end{align*}
From \cref{prop:consistencyah} and \cref{prop:FEMconvergence3}, it follows that
\begin{align}
b(\vect{w}_h,p-p_h) = a_h(\vect{u}_h,\vect{w}_h; \rho_h) - a_h(\vect{u},\vect{w}_h; \rho)+l_h(\vect{w}_h; \vect{g}) - l_h(\vect{w}_h;\vect{g}_h).
\end{align}
Therefore, 
\begin{align}
\begin{split}
&c_b \|q_h-p_h\|_{L^2(\Omega)}  \\
&\leq \sup_{\vect{w}_h \in \vect{X}_{h,0} \backslash \{0\}} \frac{|a_h(\vect{u}_h,\vect{w}_h; \rho_h) - a_h(\vect{u},\vect{w}_h; \rho)|}{\|\vect{w}_h\|_{H^1(\mathcal{T}_h)}}\\
& \indent + \sup_{\vect{w}_h \in \vect{X}_{h,0} \backslash \{0\}}\frac{|l_h(\vect{w}_h; \vect{g}) - l_h(\vect{w}_h;\vect{g}_h)|+|b(\vect{w}_h,q_h-p)|}{\|\vect{w}_h\|_{H^1(\mathcal{T}_h)}}.
\end{split}
\label{pressureerror1}
\end{align}
By using the same argument we used to bound \cref{velerror7} by \cref{velerror6}, we see that \cref{pressureerror1} implies that
\begin{align}
\begin{split}
c_b \|q_h-p_h\|_{L^2(\Omega)} 
&\leq C \|(\alpha(\rho) - \alpha(\rho_h))\vect{u}\|_{L^2(\Omega)} + C \|\vect{u}-\vect{u}_h\|_{H^1(\mathcal{T}_h)}\\
&\indent + C h^{-1} \|\vect{g} - \vect{g}_h \|_{L^2(\partial \Omega)} + C_b \|p-q_h\|_{L^2(\Omega)}.
\end{split}
\end{align}
where $C_b$ is the boundedness constant for $b(\cdot, \cdot)$ and $C$ is dependent on $C_a$, $C_{\mathrm{BT}}$, $\bar \alpha$, $L_\alpha$, $\nu$, $\sigma$ and the mesh regularity. Hence, by an application on the Cauchy--Schwarz inequality,
\begin{align}
\begin{split}
\|p-p_h\|_{L^2(\Omega)}& \leq C \|(\alpha(\rho) - \alpha(\rho_h)) \vect{u} \|_{L^{2}(\Omega)} +C \|\vect{u}-\vect{u}_h\|_{H^1(\mathcal{T}_h)}\\
&\indent +Ch^{-1} \|\vect{g} - \vect{g}_h \|_{L^2(\partial \Omega)}+ C\|p-q_h\|_{L^2(\Omega)},
\end{split}
\end{align}
where $C$ is dependent on $c_b$, $C_b$, $C_a$, $C_{\mathrm{BT}}$, $\bar \alpha$, $L_\alpha$, $\nu$, $\sigma$ and the mesh regularity.

By assumption \ref{ass:dense}, there exists a sequence of finite element functions, $\tilde{p}_h \in M_h$ that satisfies $\tilde{p}_h \to p$ strongly in $L^2(\Omega)$. Let  $q_h = \tilde{p}_h$. We have already shown that $\vect{u}_h \to \vect{u}$ strongly in $H^1(\mathcal{T}_h)^d$ in \cref{prop:FEMconvergence2}. Similarly, in the proof of \cref{prop:FEMconvergence2} we also showed that $\|(\alpha(\rho) - \alpha(\rho_h)) \vect{u} \|_{L^{2}(\Omega)} \to 0$. By assumption \ref{ass:gh}, $h^{-1} \|\vect{g} - \vect{g}_h \|_{L^2(\partial \Omega)} \to 0$. Hence, we conclude that $p_h \to p$ strongly in $L^2(\Omega)$.
\end{proof}

We now have the required results to prove \cref{th:FEMexistence}.
\begin{proof}[Proof of \cref{th:FEMexistence}]
Fix an isolated minimizer $(\vect{u}, \rho)$ of \cref{borrvallmin} and its unique associated Lagrange multiplier $p$. By the results of Propositions \ref{prop:FEMconvergence}, \ref{prop:FEMconvergence1.5}, \ref{prop:FEMconvergence2}, and \ref{prop:FEMconvergence3}, there exists a mesh size $\bar h$ such that for, $h < \bar h$, there exists a sequence of finite element solutions $(\vect{u}_h, \rho_h, p_h) \in \vect{X}_{h,\vect{g}_h} \times C_{\gamma,h} \times M_h$ satisfying \cref{FOC1h}--\cref{FOC3h} that converges to $(\vect{u}, \rho, p)$. By taking a subsequence if necessary (not indicated),  \cref{prop:FEMconvergence1.5}, implies that $\rho_h \to \rho$ strongly in $L^s(\Omega)$, $s \in [1,\infty)$, \cref{prop:FEMconvergence2} implies that $\| \vect{u}- \vect{u}_h\|_{H^1_{\vect{g}}(\mathcal{T}_h)} \to 0$, and  \cref{prop:FEMconvergence4} implies that $p_h \to p$ strongly in $L^2(\Omega)$. 
\end{proof}

\section{Numerical results}
\label{sec:numerical}
In this section we consider an example of a Borrvall--Petersson topology optimization problem that supports two minimizers. Our goal is to discretize the velocity-pressure pair with a Brezzi--Douglas--Marini $\mathrm{BDM}_1 \times \mathrm{DG}_0$ discretization,  and the material distribution with a piecewise constant $\mathrm{DG}_0$ discretization,  and numerically verify the existence of two sequences of solutions to \cref{FOC1h}--\cref{FOC3h} that converge to the two different minimizers. Moreover, we compare the violation of the incompressibility constraint, measured by $\|\divv{\vect{u}_h}\|_{L^2(\Omega)}$, with the equivalent solution as computed via a Taylor--Hood $(\mathrm{CG}_2)^2 \times \mathrm{CG}_1$ discretization for the velocity-pressure pair on the same meshes.

We note that $\mathrm{DG}_0 \not\subset C_\gamma$ and  $\mathrm{DG}_0 \not\subset L^2_0(\Omega)$, as, in general, $\eta \in \mathrm{DG}_0$ does not satisfy $0 \leq \eta \leq 1$ a.e.~and $\int_\Omega \eta \, \dx \leq \gamma |\Omega|$, and, in general, $q \in \mathrm{DG}_0$ does not satisfy $\int_\Omega q \, \dx = 0$. However, the choice of optimization algorithm (described below) only finds solutions $\rho_h \in  C_\gamma$ and $p_h \in  L^2_0(\Omega)$.  Hence, we are effectively working with the correct conforming finite element spaces for the pressure and material distribution and, hence, our discretization restricted by the optimization strategy satisfies the conditions of \cref{th:FEMexistence}.

The problem is implemented with the finite element software Firedrake \cite{firedrake} and the computational domains are triangulated with simplices. The solutions are computed using the deflated barrier method \cite{Papadopoulos2021a}. The deflated barrier method reformulates \cref{FOC1h}--\cref{FOC3h} into a mixed complementarity problem and solves the nonlinear system with a primal-dual active set solver  that enforces the box constraints on the material distribution \cite{Benson2003}. The volume constraint is enforced via a one-dimensional Lagrange multiplier. The zero mean value constraint on the pressure is either enforced via a one-dimensional Lagrange multiplier or by orthogonalizing against the nullspace of constants.  The global nonlinear convergence is aided by the continuation of barrier terms. A key feature of the deflated barrier method is that it can systematically discover multiple solutions of topology optimization problems by utilizing the deflation technique \cite{Farrell2015, Farrell2019b}. In the BDM discretization, the linear systems arising in the deflated barrier method are solved with FGMRES \cite{Saad1993} preconditioned with block preconditioning techniques and the Schur complements are controlled with an augmented Lagrangian term \cite{Papadopoulos2021c}. The individual blocks are solved by a sparse LU factorization with MUMPS \cite{mumps} and PETSc \cite{petsc}. In the Taylor--Hood discretization, we invert the entire linear system directly with MUMPS. There are no known  solutions of the infinite-dimensional problem  for choices of the inverse permeability, $\alpha$, used in practice. Hence, the errors are measured with respect to the most heavily-refined finite element solution where $h = 1.41 \times 10^{-3}$ resulting in 16,389,121 degrees of freedom for the first-order BDM discretization. 

Although, for each isolated minimizer, \cref{th:FEMexistence} guarantees the existence of a converging sequence of solutions, it does not guarantee that the sequence is unique. In principle, there can be infinitely many different subsequences of finite element solutions that strongly converge to the same  minimizer of the infinite-dimensional problem  at different convergence rates. Separate subsequences can appear in oscillations in the error and cause difficulty in the interpretation of the convergence plots \cite[Sec.~5]{Papadopoulos2021b}. Here, we attempt to find solutions along the same sequence by first computing the solutions on a coarse mesh, uniformly refining the mesh, and successively interpolating the solutions onto the finer mesh as initial guesses for the deflated barrier method.

We now describe the double-pipe problem of Borrvall and Petersson \cite[Sec.~4.5]{Borrvall2003} with a modification to the boundary conditions that improves the guaranteed regularity of the solutions. The design domain is a rectangle $\Omega = (0,3/2) \times (0,1)$ with two inlets and two outlets. The volume fraction is chosen to be $\gamma = 1/3$ and the inverse permeability $\alpha$ is given by \cref{eq:alphachoice} with $q=1/10$ and $\bar \alpha = 2.5 \times 10^4$. The boundary conditions are given by:
\begin{align}
\label{eq:doublepipebcs}
 \vect{g}(x,y) = 
\begin{cases}
\left(\exp(1-\frac{1}{1-(12y-9)^2}), 0\right)^\top & \text{if} \;\; 2/3 \leq y \leq 5/6,\, x = 0 \; \text{or} \; 3/2,\;\; \\
\left(\exp(1-\frac{1}{1-(12y-3)^2}), 0\right)^\top & \text{if} \;\; 1/6 \leq y \leq 1/3,\, x = 0 \; \text{or} \; 3/2,\;\; \\
(0,0)^\top & \text{elsewhere on} \;\partial \Omega.
\end{cases}
\end{align}
This problem supports  exactly  two isolated minimizers in form of a straight channels solution and double-ended wrench solution as shown in \cref{fig:double-pipe-solns}. 
\begin{figure}[ht]
\centering
\includegraphics[width = 0.3\textwidth]{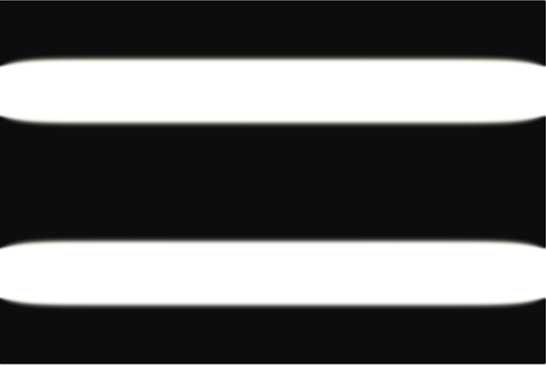}
\includegraphics[width = 0.3\textwidth]{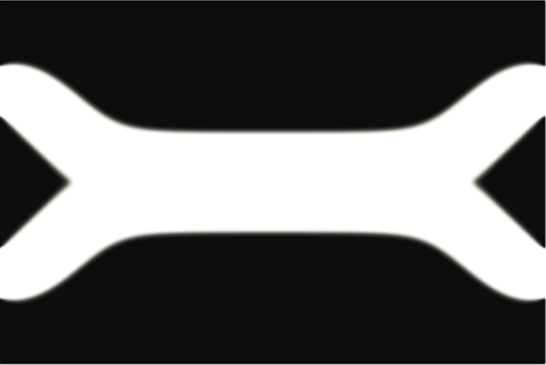}
\caption{The material distribution of the straight channels (left) and double-ended wrench (right) solutions of the double-pipe optimization problem. The black regions represent where $\rho = 0$ and the white regions are where $\rho =1$. The mesh size is $h = 1.41 \times 10^{-3}$ and a first-order BDM discretization for the velocity-pressure and a discontinuous piecewise constant discretization for the material distribution are used. This results in 16,389,121 degrees of freedom.} 
\label{fig:double-pipe-solns}
\end{figure}

It can be checked that the boundary datum $\vect{g}$ can be expressed as the trace of a function $\hat{\vect{g}} \in H^2(\Omega)^d$. Hence, since the domain is convex and the forcing term is smooth, we have that, by regularity results for isolated minimizers of the Borrvall--Petersson problem \cite[Lem.~5]{Papadopoulos2021b}, $(\vect{u}, p) \in H^2(\Omega)^d \times H^1(\Omega)$ for both minimizers. Hence, the trace of $\nabla \vect{u}$ is well-defined on the faces of each element and the consistency result in \cref{prop:consistencyah} holds. All the conditions of \cref{th:FEMexistence} are satisfied and hence there exists a sequence of solutions to \cref{FOC1h}--\cref{FOC3h} that converges strongly to the straight channel solution and a different sequence of solutions that converges to the double-ended wrench. The existence of these sequences are numerically verified in \cref{fig:double-pipe-bdm-convergence} for a  $\mathrm{DG}_0 \times \mathrm{BDM}_1 \times \mathrm{DG}_0$ discretization for $(\rho_h, \vect{u}_h, p_h)$. 
\begin{figure}[ht!]
\centering
\includegraphics[width = 0.49\textwidth]{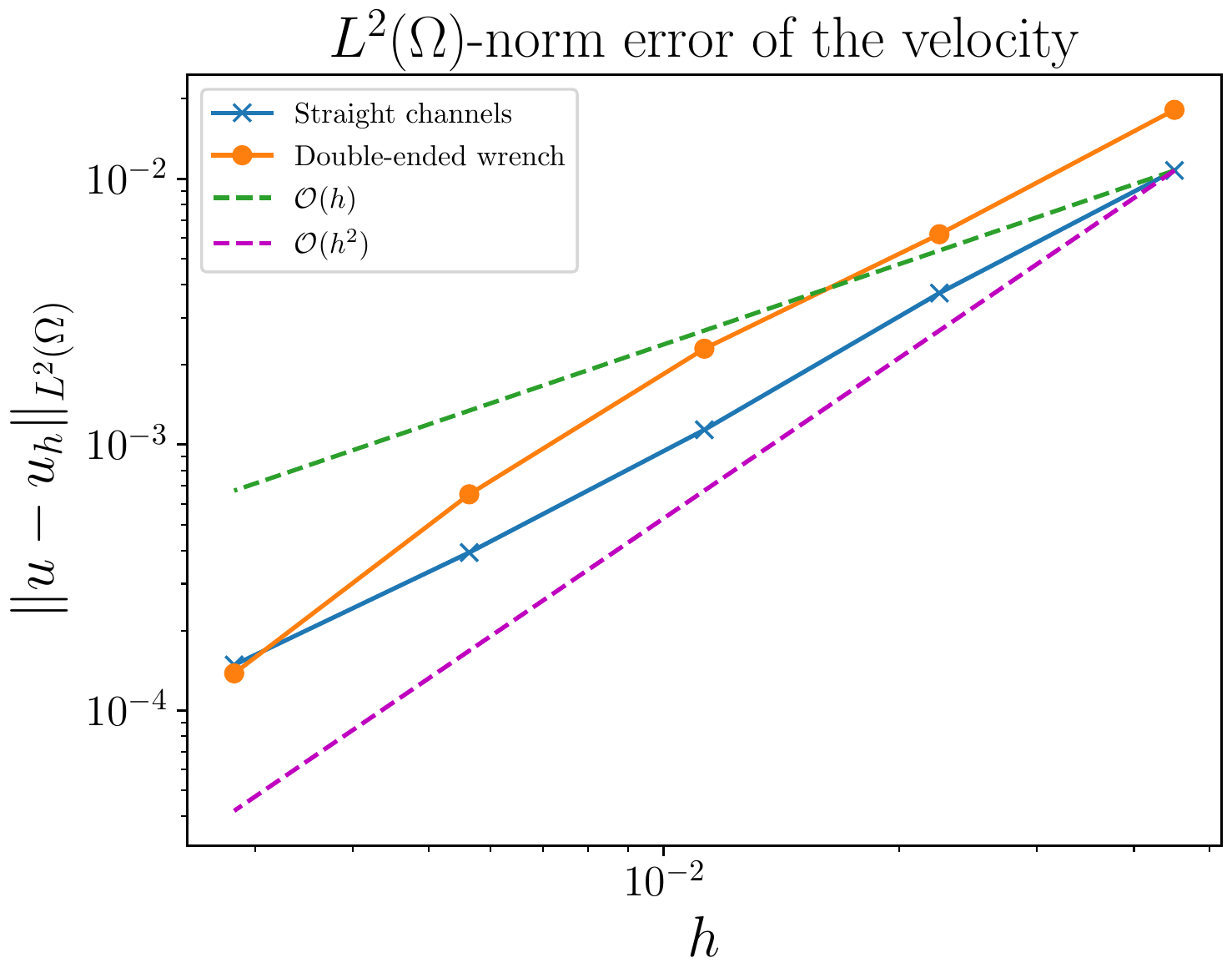}
\includegraphics[width = 0.49\textwidth]{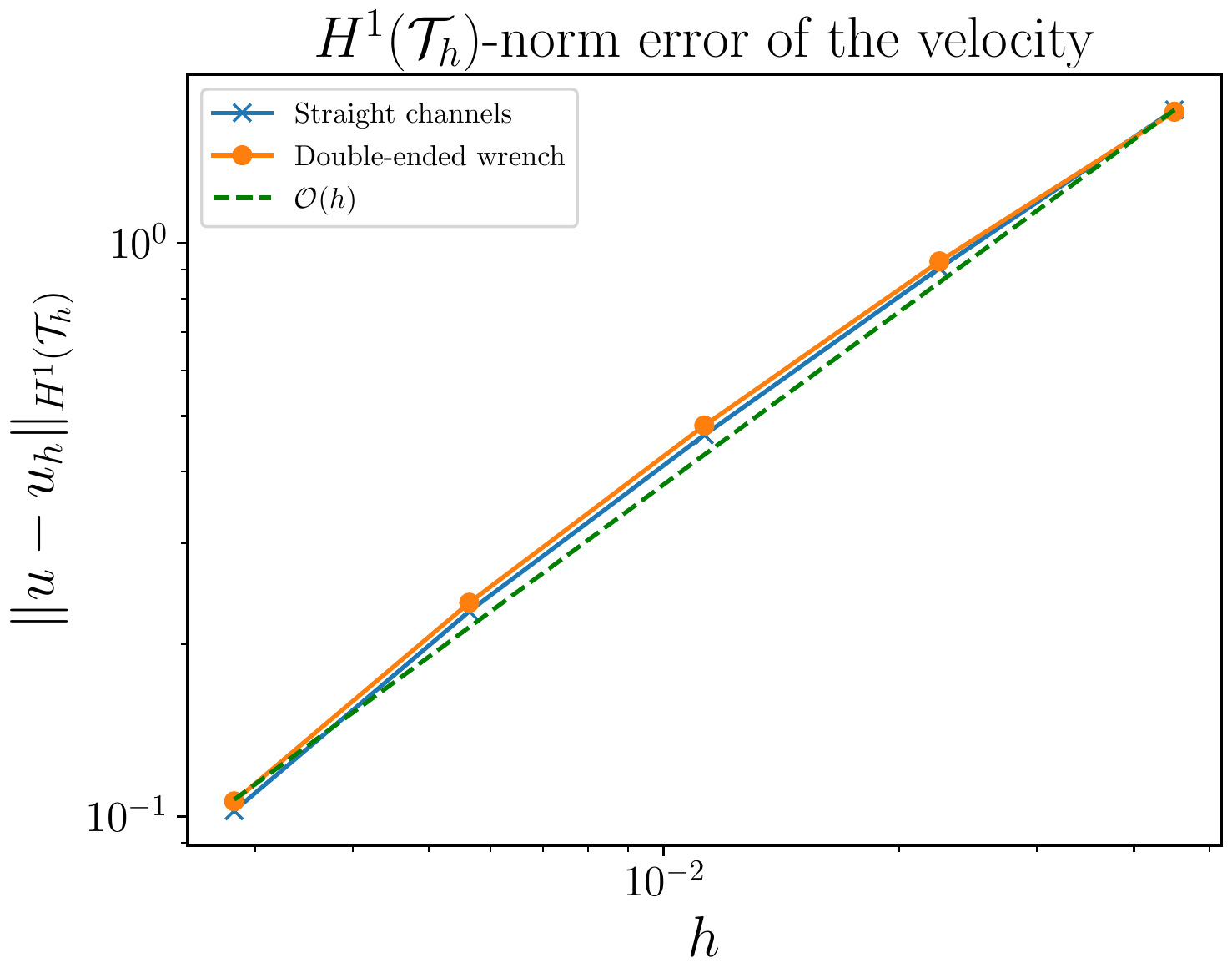}
\includegraphics[width = 0.49\textwidth]{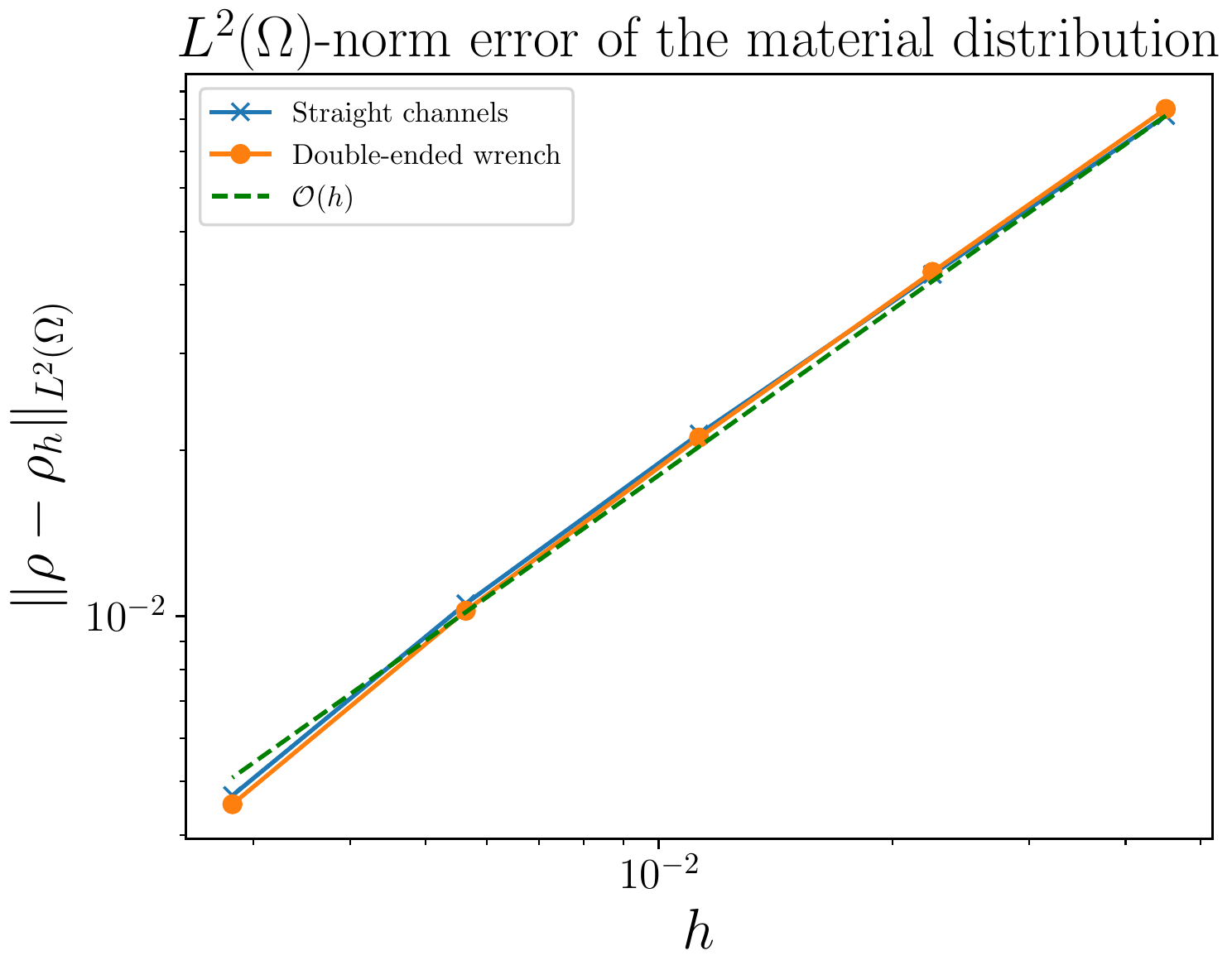}
\includegraphics[width = 0.49\textwidth]{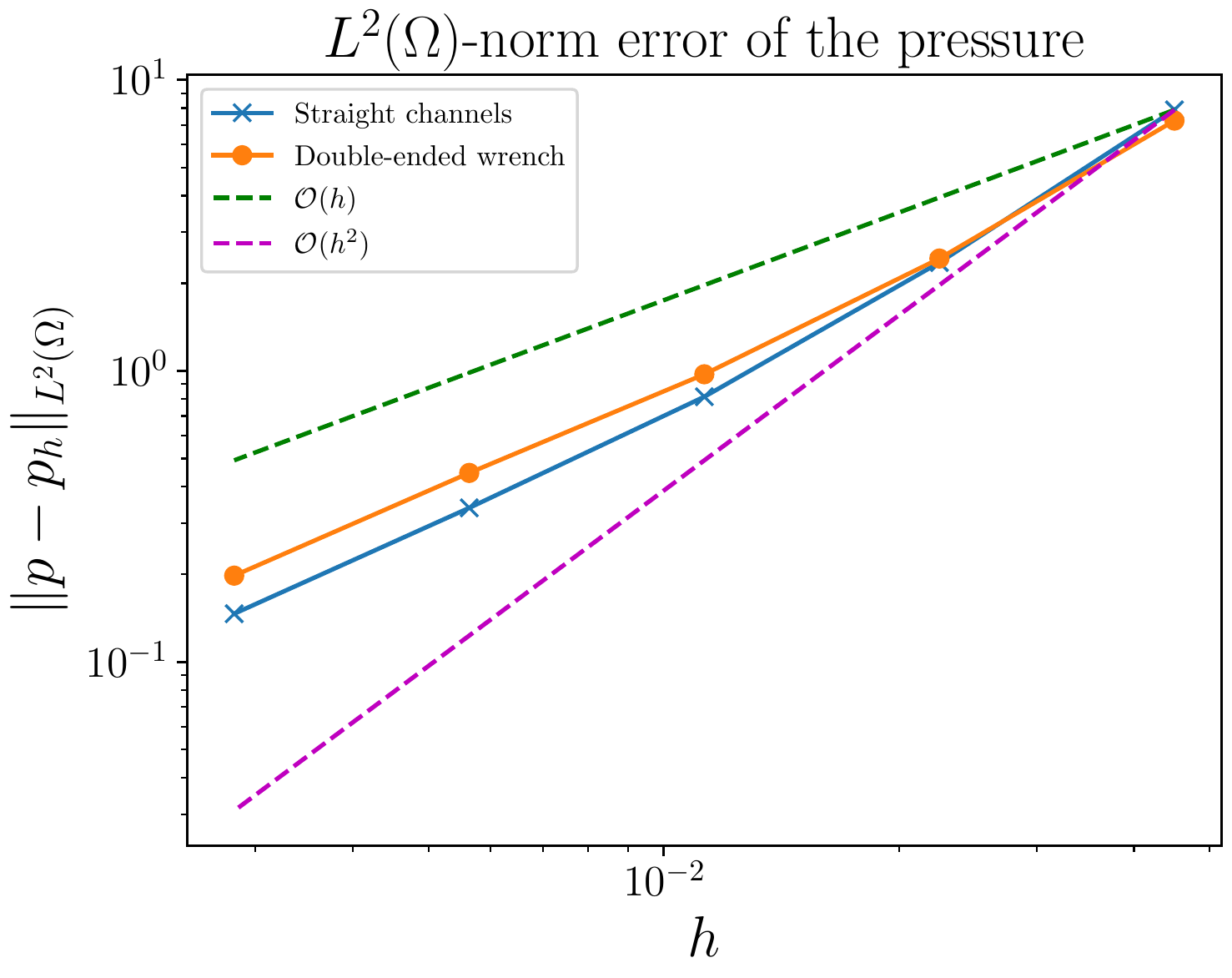}
\caption{The convergence of $\vect{u}_h$, $\rho_h$, and $p_h$ for the double-pipe problem for both the straight channels and double-ended wrench solutions on a sequence of uniformly refined meshes with a $\mathrm{DG}_0 \times \mathrm{BDM}_1 \times \mathrm{DG}_0$ discretization for $(\rho_h, \vect{u}_h, p_h)$.} 
\label{fig:double-pipe-bdm-convergence}
\end{figure}

We report the values of $\| \divv{\vect{u}_h} \|_{L^2(\Omega)}$ in \cref{tab:div-violation} for the BDM discretization alongside the equivalent solutions computed with a Taylor--Hood  $(\mathrm{CG}_2)^2 \times \mathrm{CG}_1$ discretization for the velocity-pressure pair and a $\mathrm{DG}_0$ discretization for the material distribution on the same meshes. Even on coarse meshes, the $L^2$-norm of the divergence of the velocity in the BDM discretization is small with values in the range of $10^{-6}\sim 10^{-9}$ for both minimizers.  Many of the BDM discretization values are roughly the square root of \texttt{Float64} machine precision, denoted $\mathrm{\texttt{eps}}$. Suppose that \cref{FOC2h} is satisfied up to machine precision, then we have that $|b(\vect{u}_h, q_h)| \leq \mathrm{\texttt{eps}}$. Now, by choosing $q_h = \divv{\vect{u}_h}$, we note that $\| \divv{\vect{u}_h} \|_{L^2(\Omega)} \leq \sqrt{\mathrm{\texttt{eps}}}$.  By contrast, the pointwise violation of the incompressibility constraint for the Taylor--Hood discretization manifests as relatively large values of $\| \divv{\vect{u}_h} \|_{L^2(\Omega)}$. Even on the finest mesh, where $h = 2.82 \times 10^{-3}$ resulting in 4,512,004 degrees of freedom, the $L^2$-norm is still $\mathcal{O}(10^{-3})$, 4 orders of magnitude larger than the equivalent BDM discretization. 

\begin{table}[ht]
\small
\centering
\begin{tabular}{|l|ll|ll|}
\hline
& \multicolumn{2}{c|}{Straight channels} & \multicolumn{2}{c|}{Double-ended wrench} \\
\hline
$h$ & BDM & Taylor--Hood & BDM & Taylor--Hood  \\ \hline
$ 4.51 \times 10^{-2}$ & $1.00 \times 10^{-8}$ & $2.49 \times 10^{-1}$  & $2.69 \times 10^{-6}$  & $3.25 \times 10^{-1}$ \\
$ 2.25 \times 10^{-2}$ & $6.35 \times 10^{-9}$ & $1.09 \times 10^{-1}$  & $2.75 \times 10^{-8}$  & $1.35 \times 10^{-1}$ \\
$ 1.13 \times 10^{-2}$ & $1.59 \times 10^{-7}$ & $3.95 \times 10^{-2}$  & $2.62 \times 10^{-8}$  & $4.66 \times 10^{-2}$ \\
$ 5.63 \times 10^{-3}$ & $4.19 \times 10^{-8}$ & $1.19 \times 10^{-2}$  & $1.48 \times 10^{-7}$  & $1.36\times 10^{-2}$ \\
$ 2.82 \times 10^{-3}$ & $4.97 \times 10^{-7}$ & $3.17 \times 10^{-3}$  & $2.98 \times 10^{-7}$  & $3.58\times 10^{-3}$ \\
\hline
\end{tabular}
\caption{Reported values for  $\| \divv{\vect{u}_h} \|_{L^2(\Omega)}$ in a BDM and Taylor--Hood discretization for the double-pipe problem as measured on five meshes in a uniformly refined mesh hierarchy. } 
\label{tab:div-violation}
\end{table}

\textbf{Code availability:} For reproducibility, the implementation of the deflated barrier method used in this work, as well as scripts to generate the convergence plots and solutions can be found at \url{https://github.com/ioannisPApapadopoulos/fir3dab/}. The version of the software used in this paper is archived on Zenodo \cite{dbmcode2021b}.

\section{Conclusions}
In this work we studied the convergence of a divergence-free discontinuous Galerkin finite element discretization of the fluid topology optimization model of Borrvall and Petersson \cite{Borrvall2003}. Our approach extends the techniques used by Papadopoulos and S\"uli \cite{Papadopoulos2021b} for $H^1$-conforming finite element approximations of the velocity. The nonconvexity of the optimization problem was handled by fixing any isolated minimizer and introducing a modified optimization problem with the chosen isolated minimizer as its unique solution. We then showed that there exists a sequence of discretized solutions that converges to the minimizer in the appropriate norms. In particular, $\| \vect{u} - \vect{u}_h\|_{H^1(\mathcal{T}_h)} \to 0$, $\|\rho - \rho_h\|_{L^s(\Omega)} \to 0$, $s \in [1,\infty)$, and $\|p-p_h\|_{L^2(\Omega)} \to 0$. The modified optimization problem was related back to the original optimization problem by showing that a subsequence of the strongly converging minimizers also satisfy the first-order optimality conditions of the original problem. Moreover, these first-order optimality conditions can be solved numerically. Finally, we numerically verified that these sequences exist and compared the improvement in the $L^2$-norm of the incompressibility constraint in the discretized solutions.  Future work could include adapting these methods to other topology optimization problems including extensions of the Borrvall--Petersson problem with a more sophisticated fluid flow, as well as topology optimization formulations for cantilevers and MBB beams that utilize linear elasticity.

\appendix
\section{Proof of \cref{prop:FEMconvergence1.5}}
\label{app:prop1.5}
We first quote a couple of propositions that are required in the proof of \cref{prop:FEMconvergence1.5}. The proof of the following proposition concerning the support of $\rho$ can be found in Papadopoulos and S\"uli \cite[Prop.~3]{Papadopoulos2021b}.
\begin{proposition}[Support of $\rho$]
\label{prop:rhosupport}
Suppose that $\Omega \subset \mathbb{R}^d$ is a Lipschitz domain,  $d \in \{2,3\}$, and $\alpha$ satisfies properties \ref{alpha1}--\ref{alpha4}. Further assume that the minimizer $(\vect{u},\rho) \in H^1_{\vect{g},\mathrm{div}}(\Omega)^d \times C_\gamma$ of (\ref{borrvallmin}) is  a strict minimizer. Then, $\mathrm{supp}(\rho) \subseteq U$, where $U\coloneqq\mathrm{supp}(\vect{u})$.
\end{proposition}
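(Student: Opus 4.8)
The plan is to argue by contradiction, exploiting the fact that the objective $J$ is completely insensitive to the material distribution wherever the velocity vanishes; notably, only minimality and strictness are needed, not the first-order condition \cref{FOC3} (which carries no information off $\mathrm{supp}(\vect{u})$ since its integrand contains the factor $|\vect{u}|^2$). Write $U = \mathrm{supp}(\vect{u})$ and recall that $\vect{u} = \vect{0}$, hence $|\vect{u}|^2 = 0$, a.e.\ on $\Omega \setminus U$. Inspecting \cref{borrvallmin}, the only term in $J(\vect{u}, \cdot)$ that depends on the material distribution is $\tfrac12 \int_\Omega \alpha(\rho)|\vect{u}|^2\,\dx$, so modifying $\rho$ only on a subset of $\Omega \setminus U$ leaves the value of $J(\vect{u}, \cdot)$ unchanged. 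Since $\mathrm{supp}(\rho) \subseteq U$ is equivalent to $\rho = 0$ a.e.\ on $\Omega \setminus U$, I would suppose for contradiction that $A \coloneqq \{x \in \Omega \setminus U : \rho(x) > 0\}$ has positive Lebesgue measure.

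Next I would build a family of feasible competitors distinct from $\rho$ that leave the objective value fixed. For $\epsilon > 0$ choose a measurable $A_\epsilon \subseteq A$ with $0 < |A_\epsilon| \le \epsilon$ (possible by the inner regularity of Lebesgue measure, as $|A|>0$), and set $\tilde{\rho}_\epsilon \coloneqq \rho$ on $\Omega \setminus A_\epsilon$ and $\tilde{\rho}_\epsilon \coloneqq 0$ on $A_\epsilon$. Then $0 \le \tilde{\rho}_\epsilon \le 1$ a.e.\ and $\int_\Omega \tilde{\rho}_\epsilon\,\dx \le \int_\Omega \rho\,\dx \le \gamma|\Omega|$, so $\tilde{\rho}_\epsilon \in C_\gamma$ and $(\vect{u}, \tilde{\rho}_\epsilon)$ is admissible. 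Because $\tilde{\rho}_\epsilon$ differs from $\rho$ only on $A_\epsilon \subseteq \Omega\setminus U$, where $|\vect{u}|^2 = 0$,
\begin{align}
\int_\Omega \alpha(\tilde{\rho}_\epsilon)|\vect{u}|^2\,\dx = \int_\Omega \alpha(\rho)|\vect{u}|^2\,\dx,
\end{align}
and therefore $J(\vect{u}, \tilde{\rho}_\epsilon) = J(\vect{u}, \rho)$. Moreover $\tilde{\rho}_\epsilon \neq \rho$, since they disagree on the positive-measure set $A_\epsilon$, while $\|\rho - \tilde{\rho}_\epsilon\|_{L^2(\Omega)}^2 = \int_{A_\epsilon}\rho^2\,\dx \le |A_\epsilon| \le \epsilon$.

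Finally, since $(\vect{u},\rho)$ is a minimizer we have $J(\vect{u}, \tilde{\rho}_\epsilon) \ge J(\vect{u}, \rho)$, which together with the equality above shows that $(\vect{u}, \tilde{\rho}_\epsilon)$ is itself a minimizer for every $\epsilon$. The velocity components coincide and $\|\rho - \tilde{\rho}_\epsilon\|_{L^2(\Omega)} \to 0$ as $\epsilon \to 0$, so for $\epsilon$ small enough $(\vect{u}, \tilde{\rho}_\epsilon)$ lies in the neighborhood $E$ furnished by the strict-minimizer hypothesis, yet $(\vect{u}, \tilde{\rho}_\epsilon) \neq (\vect{u},\rho)$ with $J(\vect{u}, \tilde{\rho}_\epsilon) = J(\vect{u},\rho)$, contradicting strictness. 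Hence $|A| = 0$, i.e.\ $\rho = 0$ a.e.\ on $\Omega \setminus U$, giving $\mathrm{supp}(\rho) \subseteq U$. The point demanding the most care, and the main obstacle, is topological: zeroing $\rho$ on $A_\epsilon$ is an $O(1)$ change in $L^\infty$, so the argument works only because strictness is measured with respect to the $L^2$-norm (equivalently any $L^s$-norm, $s\in[1,\infty)$) on $C_\gamma$, in line with the isolation convention adopted in this work; one must also confirm that $U$ is closed and $\vect{u}$ vanishes a.e.\ off it, so that the identity $|\vect{u}|^2=0$ on $A_\epsilon$ is genuinely available.
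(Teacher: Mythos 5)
Your proof is correct and follows essentially the same contradiction argument that the paper sketches in the remark after \cref{prop:rhosupport} (deferring the details to Papadopoulos and S\"uli): perturb $\rho$ on a small-measure subset of $\Omega \setminus U$ where $\vect{u}$ vanishes a.e., leaving $J$ unchanged while producing a distinct admissible competitor arbitrarily close in $L^2$, contradicting strictness. Your closing caveat that the construction is an $O(1)$ change in $L^\infty$ and therefore relies on strictness being measured in the $L^2$ topology is exactly right and consistent with the isolation convention adopted in this paper.
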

\begin{remark}
\cref{prop:rhosupport} is proved by contradiction. Suppose that there exists a strict minimizer $(\vect{u},\rho)$ such that $\mathrm{supp}(\rho) \not\subseteq U$. Then, it is possible to construct a pair $(\vect{u},\tilde{\rho})$ arbitrarily close to $(\vect{u},\rho)$ such that $J(\vect{u},\rho) = J(\vect{u},\tilde{\rho})$. Hence, $(\vect{u},\rho)$ cannot be a strict minimizer. 
\end{remark}

The following proposition concerning strong convergence of $\rho_h$ in sets where $\rho = 0$ or $\rho =1 $ a.e.~is thanks to Petersson \cite[Cor.~3.2]{Petersson1999} and can be found, as stated, in Papadopoulos and S\"uli \cite[Cor.~1]{Papadopoulos2021b}.
\begin{proposition}[Strong convergence of $\rho_h$ in $L^s(\Omega_b)$]
\label{prop:Omegab}
Fix an isolated minimizer $(\vect{u},\rho)$ of \cref{borrvallmin} and suppose that the conditions of \cref{th:FEMexistence} hold. Let $\Omega_b$ be any measurable subset of $\Omega$ of positive measure on which $\rho$ is equal to zero or one a.e.~(if such a set exists). Suppose that there exists a sequence of finite element minimizers, $\rho_h$, of \cref{BPh} such that $\rho_h \weakstar \rho$ weakly-* in $L^\infty(\Omega)$. Then, $\rho_h \to \rho$ strongly in $L^s(\Omega_b)$, where $s \in [1,\infty)$.
\end{proposition}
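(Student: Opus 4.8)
The plan is to exploit that on $\Omega_b$ the limit $\rho$ takes only the values $0$ and $1$, combined with the elementary inequality $t^s \leq t$, valid for every $t \in [0,1]$ and $s \geq 1$. This inequality is the crux: it dominates the (nonlinear) $L^s$-error by a quantity that is \emph{linear} in $\rho_h$, and linear quantities of this type are exactly what the hypothesis $\rho_h \weakstar \rho$ in $L^\infty(\Omega)$ controls once one tests against the indicator function of a measurable set.

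First I would decompose $\Omega_b$, up to a set of measure zero, into the two measurable pieces $\Omega_0 \coloneqq \{x \in \Omega_b : \rho(x) = 0\}$ and $\Omega_1 \coloneqq \{x \in \Omega_b : \rho(x) = 1\}$; measurability of both follows from that of $\rho$ and $\Omega_b$. On $\Omega_0$ we have $|\rho_h - \rho|^s = \rho_h^s$, while on $\Omega_1$ we have $|\rho_h - \rho|^s = (1-\rho_h)^s$. Since $\rho_h \in C_{\gamma,h} \subset C_\gamma$ ensures $0 \leq \rho_h \leq 1$ a.e., both $\rho_h$ and $1-\rho_h$ lie in $[0,1]$, so the pointwise inequality gives $\rho_h^s \leq \rho_h$ on $\Omega_0$ and $(1-\rho_h)^s \leq 1-\rho_h$ on $\Omega_1$.

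Next I would integrate these bounds and pass to the limit using the weak-* convergence. On $\Omega_0$ this yields $\int_{\Omega_0} |\rho_h - \rho|^s \, \dx \leq \int_{\Omega_0} \rho_h \, \dx$, and testing $\rho_h \weakstar \rho$ against the indicator $\chi_{\Omega_0} \in L^1(\Omega)$ gives $\int_{\Omega_0} \rho_h \, \dx \to \int_{\Omega_0} \rho \, \dx = 0$. On $\Omega_1$ we obtain $\int_{\Omega_1} |\rho_h - \rho|^s \, \dx \leq |\Omega_1| - \int_{\Omega_1} \rho_h \, \dx$, and testing against $\chi_{\Omega_1} \in L^1(\Omega)$ gives $\int_{\Omega_1} \rho_h \, \dx \to \int_{\Omega_1} \rho \, \dx = |\Omega_1|$, so the right-hand side tends to $0$. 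Adding the two contributions shows $\int_{\Omega_b} |\rho_h - \rho|^s \, \dx \to 0$, i.e.\ $\rho_h \to \rho$ strongly in $L^s(\Omega_b)$ for every $s \in [1,\infty)$.

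I do not expect a genuine obstacle: the argument is short once the decomposition and the correct test functions are identified. The only delicate point is conceptual rather than technical, namely that weak-* convergence on its own would \emph{not} control the nonlinear integral $\int \rho_h^s$, and it is precisely the reduction $t^s \leq t$ on $[0,1]$ that replaces it with the linear surrogate the convergence does control. Verifying that $\chi_{\Omega_0}, \chi_{\Omega_1} \in L^1(\Omega)$, which is immediate since $\Omega$ has finite measure, completes the justification of the limit passages.
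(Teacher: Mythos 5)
Your proposal is correct and follows essentially the same route as the paper: the same decomposition $\Omega_b = \Omega_0 \cup \Omega_1$, the same test functions $\chi_{\Omega_0}, \chi_{\Omega_1} \in L^1(\Omega)$ against the weak-* convergence, and the same use of $0 \leq \rho_h \leq 1$ to dominate the nonlinear $L^s$ quantity by a linear one. The only cosmetic difference is ordering --- the paper first establishes $L^1(\Omega_b)$ convergence and then bounds $\int_{\Omega_b}|\rho-\rho_h|^s \leq \|\rho-\rho_h\|_{L^1(\Omega_b)}$, whereas you apply $t^s \leq t$ pointwise before integrating --- which amounts to the same estimate.
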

\begin{proof}
If $\rho_h \weakstar \rho$ weakly-* in $L^\infty(\Omega)$, then by definition, $\int_\Omega \rho_h \phi \, \dx \to \int_\Omega \rho \phi \, \dx$ for all $\phi \in L^1(\Omega)$. Consider $\Omega_b = \Omega_0 \cup \Omega_1$ where $\Omega_0 \coloneqq \{ \rho = 0 \}$ and $\Omega_1 \coloneqq \{ \rho = 1\}$. Suppose that $|\Omega_0|>0$. Choose $\phi = \chi_{\Omega_0} \in L^1(\Omega)$ (the characteristic function for $\Omega_0$). Then,
\begin{align}
\int_{\Omega_0} \rho_h \, \dx \to \int_{\Omega_0} \rho \, \dx = 0,
\end{align}
as $\rho = 0$ a.e.~in $\Omega_0$. Hence, $\int_{\Omega_0} |\rho_h - \rho| \dx \to 0$ as $0 \leq \rho_h$. Similarly, if $|\Omega_1|>0$, by choosing $\phi = \chi_{\Omega_1} \in L^1(\Omega)$ and utilizing that $\rho_h \leq 1$, we find that $\int_{\Omega_1} |\rho - \rho_h| \dx \to 0$. Therefore, $\| \rho - \rho_h \|_{L^1(\Omega_b)} \to 0$. Consider any $s \in (1,\infty)$. Then,
\begin{align}
\int_{\Omega_b} |\rho - \rho_h|^s \dx \leq \int_{\Omega_b} |\rho - \rho_h|^{s-1}|\rho - \rho_h| \dx \leq \| \rho - \rho_h\|_{L^1(\Omega_b)} \to 0,
\end{align}
where the second inequality holds since $0 \leq \rho, \rho_h \leq 1$. Therefore, $\| \rho - \rho_h \|_{L^s(\Omega_b)} \to 0$ for any $s \in [1,\infty)$. 
\end{proof}

We now reproduce the proof of \cref{prop:FEMconvergence1.5} as found in \cite[Prop.~5]{Papadopoulos2021b}, with some small modifications. 
\begin{proof}[Proof of \cref{prop:FEMconvergence1.5}]
We note that $C_{\gamma,h} \cap B_{r/2,L^2(\Omega)}(\rho)$ is a convex set, and hence for any $\eta_h \in C_{\gamma,h} \cap B_{r/2,L^2(\Omega)}(\rho)$, $t \in [0,1]$, we have that $ \rho_h  + t(\eta_h - \rho_h) \in C_{\gamma,h} \cap B_{r/2,L^2(\Omega)}(\rho)$. Since $(\vect{u}_h, \rho_h)$ is a global minimizer of \cref{BPh}, we note that
\begin{align}
\frac{1}{t} \left[ J_h(\vect{u}_h,  \rho_h  + t(\eta_h - \rho_h)) - J_h(\vect{u}_h, \rho_h) \right] \geq 0.
\end{align}
By taking the limit $t \to 0$ and noting that by assumption \labelcref{alpha4}, $\alpha$ is continuously differentiable, we deduce that
\begin{align}
\int_\Omega \alpha'(\rho_h) |\vect{u}_h|^2 (\eta_h - \rho_h) \dx \geq 0 \;\; \text{for all} \;\; \eta_h \in  C_{\gamma,h} \cap B_{r/2,L^2(\Omega)}(\rho). \label{ch3:femconvergence1.5-1}
\end{align}
Hence, \cref{FOC3} and \cref{ch3:femconvergence1.5-1} imply that for all $\eta \in C_\gamma$ and $\eta_h \in  C_{\gamma,h} \cap B_{r/2,L^2(\Omega)}(\rho)$ we have that
\begin{align}
\int_\Omega \alpha'(\rho) |\vect{u}|^2 \rho \; \dx &\leq \int_\Omega \alpha'(\rho) |\vect{u}|^2 \eta \; \dx, \label{ch3:materialerror1}\\
\int_\Omega \alpha'(\rho_h) |\vect{u}_h|^2 \rho_h \; \dx &\leq \int_\Omega \alpha'(\rho_h) |\vect{u}_h|^2 \eta_h \; \dx. \;\;\label{ch3:materialerror2}
\end{align}
By subtracting $\int_\Omega \alpha'(\rho) |\vect{u}|^2 \rho_h \dx$ from \cref{ch3:materialerror1} and $\int_\Omega \alpha'(\rho_h) |\vect{u}_h|^2 \rho \, \dx$ from \cref{ch3:materialerror2}, we see that
\begin{align}
\int_\Omega \alpha'(\rho) |\vect{u}|^2 (\rho - \rho_h) \; \dx &\leq \int_\Omega \alpha'(\rho)  |\vect{u}|^2 (\eta - \rho_h)\dx, \label{ch3:materialerror3}\\
\int_\Omega \alpha'(\rho_h)  |\vect{u}_h|^2 (\rho_h - \rho)\; \dx &\leq \int_\Omega \alpha'(\rho_h)  |\vect{u}_h|^2 (\eta_h - \rho) \; \dx . \label{ch3:materialerror4}
\end{align}
Summing \cref{ch3:materialerror3} and \cref{ch3:materialerror4} and rearranging the left-hand side, we see that
\begin{align}
\begin{split}
&\int_\Omega (\alpha'(\rho) - \alpha'(\rho_h))|\vect{u}|^2 (\rho - \rho_h)\dx + \int_\Omega \alpha'(\rho_h) (|\vect{u}|^2 - |\vect{u}_h|^2)(\rho - \rho_h) \dx\\
&\indent \leq \int_\Omega \alpha'(\rho)  |\vect{u}|^2 (\eta - \rho_h)\dx+ \int_\Omega \alpha'(\rho_h) |\vect{u}_h|^2 (\eta_h - \rho) \dx. \label{ch3:materialerror5}
\end{split}
\end{align}
By fixing $\eta = \rho_h \in C_\gamma$ and subtracting the second term on the left-hand side of \cref{ch3:materialerror5} from both sides we deduce that
\begin{align}
\label{ch3:materialerror6}
\begin{split}
&\int_\Omega (\alpha'(\rho) - \alpha'(\rho_h))|\vect{u}|^2 (\rho - \rho_h)\dx\\
&\indent \leq \int_\Omega \alpha'(\rho_h) |\vect{u}_h|^2 (\eta_h - \rho) \dx 
 + \int_\Omega \alpha'(\rho_h) (|\vect{u}_h|^2-|\vect{u}|^2)(\rho - \rho_h) \dx.
\end{split}
\end{align}
By an application of the mean value theorem, we note that there exists a $c\in (0,1)$ such that
\begin{align}
\begin{split}
&\int_\Omega (\alpha'(\rho) - \alpha'(\rho_h)) |\vect{u}|^2 (\rho - \rho_h)\dx\\
& \indent = \int_\Omega \alpha''(\rho_h + c(\rho - \rho_h))  |\vect{u}|^2 (\rho - \rho_h)^2\dx.
\end{split}\label{ch3:materialerror11}
\end{align}
Since by assumption \labelcref{alpha2}, $\alpha$ is strongly convex and by \labelcref{alpha4} it is twice continuously differentiable, there exists a constant $\alpha''_{\mathrm{min}} > 0$ such that
\begin{align}
\alpha''_{\mathrm{min}} \leq \alpha''(y) \;\; \text{for all} \;\; y \in [0,1]. \label{alphabound}
\end{align}
Therefore by \cref{alphabound} and the definition of $U_\theta$ (given in \cref{th:FEMexistence}) we bound \cref{ch3:materialerror11} from below: 
\begin{align}
\label{ch3:materialerror7}
\begin{split}
 &\int_\Omega \alpha''(\rho_h + c(\rho - \rho_h)) |\vect{u}|^2 (\rho - \rho_h)^2\dx\\
 & \indent \geq  \int_{U _\theta} \alpha''(\rho_h + c(\rho - \rho_h))  |\vect{u}|^2 (\rho - \rho_h)^2\dx
\geq \alpha''_{\mathrm{min}} \theta \|\rho - \rho_h\|^2_{L^2(U_\theta)}.
\end{split}
\end{align}
Now we bound the right-hand side of \cref{ch3:materialerror6} as follows,
\begin{align}
\label{ch3:materialerror8}
\begin{split}
&\int_\Omega \alpha'(\rho_h) |\vect{u}_h|^2 (\eta_h - \rho) \dx 
 + \int_\Omega \alpha'(\rho_h) (|\vect{u}_h|^2-|\vect{u}|^2)(\rho - \rho_h) \dx \\
&\indent \leq 2\alpha'_{\text{max}}(\|\vect{u}\|^2_{L^4(\Omega)} + \|\vect{u} - \vect{u}_h\|^2_{L^4(\Omega)}) \| \rho - \eta_h \|_{L^2(\Omega)} \\
&\indent\indent+ \alpha'_{\text{max}} \|\rho-\rho_h\|_{L^q(\Omega)} \| \vect{u} + \vect{u}_h \|_{L^{q'}(\Omega)} \| \vect{u} - \vect{u}_h \|_{L^2(\Omega)},
\end{split}
\end{align}
where $2<q' < \infty$ in two dimensions, $2<q' \leq 6$ in three dimensions, and $q = 2q'/(q'-2)$. We note that 
\begin{align}
\begin{split}
&\| \vect{u} + \vect{u}_h \|_{L^{q'}(\Omega)} \leq \| \vect{u} \|_{L^{q'}(\Omega)}  + \| \vect{u}_h \|_{L^{q'}(\Omega)} \\
&\indent \leq \| \vect{u} \|_{H^1(\Omega)}  + \| \vect{u}_h \|_{H^1(\mathcal{T}_h)} \leq \hat{C} < \infty,
\end{split} \label{ch3:materialerror13}
\end{align}
where the second inequality holds thanks to the Sobolev embedding theorem and the broken Friedrichs-type inequality as found in Buffa and Ortner \cite[Cor.~4.3]{Buffa2009}.
Combining \cref{ch3:materialerror6}--\cref{ch3:materialerror13} we see that
\begin{align}
\label{ch3:materialerror9}
\|\rho - \rho_h\|^2_{L^2(U_\theta)} \leq C\left( \| \rho - \eta_h \|_{L^2(\Omega)} +  \|\rho-\rho_h\|_{L^q(\Omega)}\| \vect{u} - \vect{u}_h \|_{L^2(\Omega)} \right),
\end{align}
where $C = C(\alpha'_{\text{max}}, \alpha''_{\mathrm{min}}, \theta, \|\vect{u}\|_{L^4(\Omega)}, \hat{C})$. By assumption \labelcref{ass:dense}, there exists a sequence of finite element functions $\tilde{\rho}_h \in C_{\gamma,h}$ such that $\tilde{\rho}_h \to \rho$ strongly in $L^2(\Omega)$. Thanks to the strong convergence, we note that for sufficiently small $h$, $\tilde{\rho}_h \in C_{\gamma,h} \cap B_{r/2,L^2(\Omega)}(\rho)$. Hence we fix $\eta_h =  \tilde{\rho}_h$. By \cref{prop:FEMconvergence}, we know that $\vect{u}_h \to \vect{u}$ strongly in $L^2(\Omega)^d$ and since $\rho \in C_\gamma$, $\rho_h \in C_{\gamma,h} \subset C_{\gamma}$, then $\| \rho - \rho_h \|_{L^q(\Omega)}\leq |\Omega|^{1/q} \| \rho - \rho_h \|_{L^\infty(\Omega)} \leq |\Omega|^{1/q}$. Therefore, the right-hand side of \cref{ch3:materialerror9} tends to zero as $h \to 0$. Hence, we deduce that 
\begin{align}
\rho_h \to \rho \;\; \text{strongly in} \;\; L^2(U_\theta), \;\; \theta > 0. 
\label{ch3:materialerror15}
\end{align}
We define $U$ as $U \coloneqq \mathrm{supp}(\vect{u})$. Now we note that
\begin{align}
\label{ch3:rhoOmegaSplit}
\|\rho - \rho_h\|_{L^2(\Omega)}  = \|\rho - \rho_h\|_{L^2(U_\theta)} + \|\rho - \rho_h\|_{L^2(U\backslash U_\theta)} + \|\rho - \rho_h\|_{L^2(\Omega\backslash U)}.
\end{align}
If $U\backslash U_\theta$ or $\Omega \backslash U$ are empty, we neglect the corresponding term in \cref{ch3:rhoOmegaSplit} with no loss of generality. Suppose $\Omega \backslash U$ is non-empty.  By definition of $U$, $\vect{u} = \vect{0}$ a.e.~in $\Omega \backslash U$. \cref{prop:rhosupport} implies that $\rho = 0$ a.e.~in $\Omega \backslash U$. Hence, $\Omega \backslash U \subseteq \Omega_b$ where $\Omega_b = \{\rho=1\}\cup\{\rho=0\}$. Moreover, in \cref{prop:FEMconvergence} we showed that $\rho_h \weakstar \rho$ weakly-* in $L^\infty(\Omega)$. Therefore, \cref{prop:Omegab} implies that 
\begin{align}
\label{ch3:materialerror16}
\rho_h \to \rho \;\;\text{strongly in} \;\; L^2(\Omega \backslash U). 
\end{align}
 Suppose $U\backslash U_\theta$ is non-empty. Since, $\rho, \rho_h \in C_\gamma$ we see that 
\begin{align}
\label{ch3:materialerror17}
\|\rho - \rho_h\|_{L^2(U\backslash U_\theta)} \leq |U \backslash U_\theta|^{1/2} \to 0 \;\; \text{as} \;\; \theta \to 0.
\end{align} 
Therefore, by first taking the limit as $h \to 0$ and then by taking the limit as $\theta \to 0$, \cref{ch3:materialerror15}--\cref{ch3:materialerror17} imply that $\rho_h \to \rho$ strongly in $L^2(\Omega)$. 

Since $\| \rho - \rho_h \|_{L^1(\Omega)} \leq |\Omega|^{1/2} \|\rho - \rho_h \|_{L^2(\Omega)}$, we see that $\rho_h \to \rho$ strongly in $L^1(\Omega)$. Hence, for any $s \in [1,\infty)$,
\begin{align}
\int_{\Omega} | \rho - \rho_h|^s \dx = \int_{\Omega}  | \rho - \rho_h|^{s-1}  | \rho - \rho_h|\dx \leq 1^{s-1} \| \rho - \rho_h \|_{L^1(\Omega)},
\end{align}
which implies that $\rho_h \to \rho$ strongly in $L^s(\Omega)$.
\end{proof}

\section*{Acknowledgments}
The author would like to thank Pablo Alexei Gazca-Orozco for a useful discussion on weak compactness results, and Endre S\"uli  and Patrick Farrell for their comments on the manuscript. The author would also like to thank the anonymous reviewers for their insightful comments.

\clearpage
\bibliographystyle{siamplain}
\bibliography{references}
\end{document}